\newtheorem{theorem}{Theorem}\numberwithin{theorem}{section}
\newtheorem{lemma}[theorem]{Lemma}
\newtheorem{notation}[theorem]{Notation}
\newtheorem{theoremm}{Theorem}\numberwithin{theoremm}{subsection}
\newtheorem{deffinition}[theoremm]{Definition}
\newtheorem{lemmma}[theoremm]{Lemma}
\newtheorem{corrollary}[theoremm]{Corollary}
\newtheorem{propposition}[theoremm]{Proposition}
\newtheorem{theoremmm}{Theorem}\numberwithin{theoremmm}{subsubsection}
\newtheorem{lemmmma}[theoremmm]{Lemma}
\theoremstyle{remark}
\newcommand{\Rad}{\operatorname{Rad}}
\newcommand{\Aut}{\operatorname{Aut}}
\newcommand{\Alt}{\mathcal{A}}
\newcommand{\PSL}{\operatorname{PSL}}
\newcommand{\aff}{\mathrm{aff}}
\newcommand{\Aff}{\operatorname{Aff}}
\newcommand{\cl}{\operatorname{cl}}
\newcommand{\lcm}{\operatorname{lcm}}
\newcommand{\sh}{\operatorname{sh}}
\newcommand{\ord}{\operatorname{ord}}
\newcommand{\Sym}{\mathcal{S}}
\newcommand{\Hol}{\operatorname{Hol}}
\newcommand{\A}{\operatorname{A}}
\newcommand{\LL}{\operatorname{L}}
\newcommand{\C}{\operatorname{C}}
\newcommand{\el}{\operatorname{el}}
\newcommand{\Orb}{\operatorname{Orb}}
\newcommand{\meo}{\operatorname{meo}}
\newcommand{\mao}{\operatorname{mao}}
\newcommand{\CRRad}{\operatorname{CRRad}}
\newcommand{\Soc}{\operatorname{Soc}}
\newcommand{\Inn}{\operatorname{Inn}}
\newcommand{\id}{\operatorname{id}}
\newcommand{\N}{\operatorname{N}}
\newcommand{\fix}{\operatorname{fix}}
\newcommand{\Out}{\operatorname{Out}}
\newcommand{\e}{\mathrm{e}}
\newcommand{\T}{\mathcal{T}}
\newcommand{\PGL}{\operatorname{PGL}}
\newcommand{\Gal}{\operatorname{Gal}}
\newcommand{\GL}{\operatorname{GL}}
\newcommand{\Frob}{\operatorname{Frob}}
\newcommand{\PSU}{\operatorname{PSU}}
\newcommand{\PSp}{\operatorname{PSp}}
\newcommand{\PO}{\operatorname{P}\Omega}
\begin{document}

\title{Cycle lengths in finite groups and the size of the solvable radical}

\author{Alexander Bors\thanks{University of Salzburg, Mathematics Department, Hellbrunner Stra{\ss}e 34, 5020 Salzburg, Austria. \newline E-mail: \href{mailto:alexander.bors@sbg.ac.at}{alexander.bors@sbg.ac.at} \newline The author is supported by the Austrian Science Fund (FWF):
Project F5504-N26, which is a part of the Special Research Program \enquote{Quasi-Monte Carlo Methods: Theory and Applications}. \newline 2010 \emph{Mathematics Subject Classification}: primary: 20B25, 20D25, 20D45, secondary: 20D05, 20E22, 20G40, 37P99. \newline \emph{Key words and phrases:} finite groups, cycle structure, solvable radical, semisimple groups}}

\date{\today}

\maketitle

\abstract{We prove the following: For any $\rho\in\left(0,1\right)$, if a finite group $G$ has an automorphism with a cycle of length at least $\rho\cdot|G|$, then the index of the solvable radical $\Rad(G)$ in $G$ is bounded from above in terms of $\rho$, and such a condition is strong enough to imply solvability of $G$ if and only if $\rho>\frac{1}{10}$. Furthermore, considering, for exponents $e\in\left(0,1\right)$, the condition that a finite group $G$ have an automorphism with a cycle of length at least $|G|^e$, such a condition is strong enough to imply $|\Rad(G)|\to\infty$ for $|G|\to\infty$ if and only if $e>\frac{1}{3}$. We also prove similar results for a larger class of bijective self-transformations of finite groups, so-called periodic affine maps.}

\section{Introduction}\label{sec1}

\subsection{Motivation and main results}\label{subsec1P1}

In the author's preprint \cite{Bor15a}, we studied how having an automorphism with a \enquote{long} cycle restricts the structure of finite groups. One of the main results was that a finite group $G$ with an automorphism one of whose cycles has length greater than $\frac{1}{2}|G|$ is necessarily abelian. For proving these (and other) results, it turned out to be fruitful to study not only largest possible cycle lengths of automorphisms of finite groups $G$, but also of a more general type of bijective self-transformations, namely maps $\A_{g_0,\alpha}:G\rightarrow G$ of the form $g\mapsto g_0\alpha(g)$ for some fixed $g_0\in G$ and automorphism $\alpha$ of $G$. We called these maps \textit{periodic (left-)affine maps of $G$}.

Although most of the techniques introduced in \cite{Bor15a} work under weaker assumptions as well, one important idea (that an automorphism cycle, in a finite group $G$ with $|G|\geq 3$, of length at least $\frac{1}{2}|G|$ must intersect with its pointwise inverse) makes explicit use of the fraction $\frac{1}{2}$, and it is not clear how one could derive similar results under the assumption that $G$ have an automorphism cycle of length at least, say, $\frac{1}{3}|G|$.

We will tackle this problem here by studying consequences of conditions on finite groups $G$ of the form \enquote{$G$ has an automorphism cycle of length at least $\rho|G|$} (\enquote{first kind}) and \enquote{$G$ has a periodic affine map cycle of length at least $\rho|G|$} (\enquote{second kind}) for some fixed $\rho\in\left(0,1\right)$, and also of the form \enquote{$G$ has an automorphism cycle of length at least $|G|^e$} (\enquote{third kind}) and \enquote{$G$ has a periodic affine map cycle of length at least $|G|^e$} (\enquote{fourth kind}) for some fixed $e\in\left(0,1\right)$. Our main results, all of which rely on the classification of finite simple groups (CFSG), are as follows:

\begin{theoremm}\label{mainTheo1}
Let $\rho\in\left(0,1\right)$ be fixed, let $G$ be a finite group, and denote by $\Rad(G)$ the solvable radical of $G$. Then:

\noindent (1) If $G$ has an automorphism cycle of length at least $\rho|G|$, then $[G:\Rad(G)]\leq\rho^{E_1}$, where $E_1=-1.778151\ldots$.

\noindent (2) If $G$ has a periodic affine map cycle of length at least $\rho|G|$, then $[G:\Rad(G)]\leq\rho^{E_2}$, where $E_2=-5.906890\ldots$.
\end{theoremm}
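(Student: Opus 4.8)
The plan is to peel off the solvable radical, reducing to a group with trivial solvable radical, and then to bound the longest cycle of an automorphism (resp.\ periodic affine map) of such a ``semisimple'' group in terms of its order; the exponents $E_1$ and $E_2$ are forced by the behaviour of $\Alt_5$, for which $\mao(\Alt_5)=6$ and the longest periodic affine map cycle has length $30$, while $|\Alt_5|=60$.

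First I would carry out the reduction. Since $\Rad(G)$ is characteristic in $G$, an automorphism $\sigma$ of $G$ induces an automorphism $\overline{\sigma}$ of $\overline{G}:=G/\Rad(G)$, and a periodic affine map $\A_{g_0,\alpha}$ induces $\A_{\overline{g_0},\overline{\alpha}}$ on $\overline{G}$; in each case the induced map is the quotient of the original one along the projection $\pi\colon G\twoheadrightarrow\overline{G}$. If $T$ is a cycle of length $\ell$ of the original map, then $\pi(T)$ is a cycle of the induced map of some length $m$ dividing $\ell$, and since the induced map permutes the fibres of $\pi|_T$ cyclically and injectively, these fibres all have the common size $\ell/m$; as each fibre lies in a coset of $\Rad(G)$ we get $\ell/m\le|\Rad(G)|$, hence $m\ge\ell/|\Rad(G)|$. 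Thus the hypothesis $\ell\ge\rho|G|$ produces, on the semisimple group $\overline{G}$, a cycle of length $\ge\rho[G:\Rad(G)]=\rho|\overline{G}|$. It therefore suffices to prove: for finite $H$ with $\Rad(H)=1$, every automorphism cycle of $H$ has length at most $|H|^{\gamma_1}$ and every periodic affine map cycle of $H$ has length at most $|H|^{\gamma_2}$, where $\gamma_1:=\frac{\log 6}{\log 60}$ and $\gamma_2:=\frac{\log 30}{\log 60}$; for then $\rho|H|\le|H|^{\gamma_i}$ gives $|H|\le\rho^{1/(\gamma_i-1)}$, and one checks $1/(\gamma_1-1)=E_1$ and $1/(\gamma_2-1)=E_2$.

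Next I would exploit the structure of $H$ with $\Rad(H)=1$: then $\C_H(\Soc(H))=1$ and $\Soc(H)$ is characteristic in $H$, so both $H$ and $\Aut(H)$ embed into $\Aut(\Soc(H))=\prod_i\bigl(\Aut(S_i)\wr\Sym_{n_i}\bigr)$, where $\Soc(H)\cong\prod_i S_i^{n_i}$ with the $S_i$ pairwise non-isomorphic nonabelian simple. Hence every cycle of an automorphism of $H$ has length dividing $\meo(\Aut(\Soc(H)))$, and every cycle of a periodic affine map of $H$ has length dividing $\meo(H\rtimes\Aut(H))$, which in turn divides $\meo(\Aut(\Soc(H)))^{2}$ via the identity $\ord(h,\beta)=\ord(\beta)\cdot\ord\bigl(h\,\beta(h)\cdots\beta^{\ord(\beta)-1}(h)\bigr)$ in $H\rtimes\Aut(H)$. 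Since an automorphism cycling $n$ copies of a simple factor $S$ multiplies cycle lengths by a factor at most $\meo(\Sym_n)$, and $|H|\ge|S|^{n}\ge 60^{n}$, the quantity $\log\bigl(\meo(\Sym_n)\cdot\meo(\Aut(S))\bigr)/(n\log 60)$ is maximised at $n=1$. This reduces everything to: for every nonabelian simple $S$, $\meo(\Aut(S))\le|S|^{\gamma_1}$ and $\meo(S\rtimes\Aut(S))\le|S|^{\gamma_2}$, with equality forced only by $S=\Alt_5$, where $\meo(\Sym_5)=6$ and $\meo(\Alt_5\rtimes\Sym_5)=30$.

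The remaining per-$S$ estimates are the heart of the matter and use CFSG. For $S=\Alt_n$ one has $\Aut(S)\le\Sym_n$ and Landau's bound $\meo(\Sym_n)=e^{(1+o(1))\sqrt{n\ln n}}=|S|^{o(1)}$. For $S$ of Lie type in characteristic $p$ one bounds the orders of semisimple, unipotent, and graph-/field-automorphism-twisted elements of $\Aut(S)$ by the standard linear- or polynomial-in-$q$ estimates, obtaining $\meo(\Aut(S))\le|S|^{c}$ with $c$ staying well below $\gamma_1$ once the Lie rank or $q$ is large; the finitely many remaining small Lie-type groups, together with the $26$ sporadic groups, are checked directly from their known element-order and automorphism data. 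The main obstacle is exactly this uniformity across the infinite Lie-type families --- one must verify that each such family, and also the factor-permutation step, stays \emph{strictly} below the $\Alt_5$ threshold rather than merely below it asymptotically --- combined with the need to compute $\meo(S\rtimes\Aut(S))$ exactly for the few extremal small groups, first and foremost $\meo(\Alt_5\rtimes\Sym_5)=30$, since the crude bound $\meo(\Aut(S))^2$ would overshoot there. The reduction steps themselves are routine.
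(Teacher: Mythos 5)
Your overall route is the paper's: reduce to the semisimple quotient $G/\Rad(G)$ (your fibre argument is exactly the content of Lemma \ref{transferLem}(1), i.e.\ $\lambda(G/N)\geq\lambda(G)$), bound cycle lengths of automorphisms and of periodic affine maps of a semisimple $H$ by element orders in $\Aut(\Soc(H))$ and in the holomorph, and finish with a CFSG case analysis in which $\PSL_2(5)\cong\Alt_5$ is extremal, yielding $e_1=\log_{60}6$ and $e_2=\log_{60}30$. The constants, the reduction steps, and the identification of the hard part are all essentially as in the paper.

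There are, however, two genuine gaps. First, your reduction from $S^{n}$ to $S$ rests on the bound $\meo(\Aut(S)\wr\Sym_n)\leq\meo(\Sym_n)\cdot\meo(\Aut(S))$, which is false: the base group $\Aut(S)^{n}$ already contains elements of order $\meo(\Aut(S)^{n})$, the $\lcm$ of up to $n$ element orders of $\Aut(S)$. For $S=\PSL_2(p)$ this equals $p(p+1)$ at $n=2$ and $\lcm(p+1,p,p-1)=\tfrac{1}{2}p(p^2-1)=|S|$ at $n=3$, so the quantity you maximise over $n$ is not the relevant one, and the cases $n=2,3$ cannot be dismissed; the paper's main lemma lists them as genuine exceptional families (they violate the $|T|^{1/3}$ bound), and Subsections \ref{subsubsec4P3P3} and \ref{subsubsec4P3P4} handle them with separate arguments, including a centralizer analysis for the affine case at $n=2$. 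The final exponent $e_1$ does survive, since $|S^n|^{e_1}$ still dominates these values, but that must be verified rather than inferred from $n=1$. Second, your affine extremal value is wrong: $\meo(\Alt_5\rtimes\Sym_5)=\meo(\Hol(\Alt_5))=15$ (this is the paper's $\Lambda_{\aff}(\PSL_2(5))=\tfrac{1}{4}\cdot 60$ in Table \ref{table1}), whereas the value $30$ that determines $\gamma_2$ is $\meo(\Hol(\Aut(\Alt_5)))=\meo(\Hol(\Sym_5))$. Since a semisimple $H$ can be any group between $\Soc(H)$ and $\Aut(\Soc(H))$, the correct per-$S$ target is a bound on $\meo(\Hol(H))$ for all such $H$ --- equivalently on $\meo(\Hol(\Aut(S)))$, via completeness of $\Aut(S)$ --- not on $\meo(S\rtimes\Aut(S))$; the exponent $\gamma_2=\log_{60}30$ you chose happens to be the right one for this corrected target.
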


So finite groups satisfying a condition of one of the first two forms are \enquote{not too far from being solvable}. An interesting question is for which values of $\rho$ such a condition actually implies solvability. Note that by \cite[Theorem 1.1.7]{Bor15a}, for conditions of the first form, this is the case whenever $\rho>\frac{1}{2}$. However, since solvability is a weaker condition than abelianity, one may hope to be able to do better, and actually, we will prove:

\begin{corrollary}\label{mainCor}
Let $G$ be a finite group.

\noindent (1) If $G$ has an automorphism cycle of length greater than $\frac{1}{10}|G|$, then $G$ is solvable. On the other hand, the alternating group $\Alt_5$ has an automorphism cycle of length $6=\frac{1}{10}|\A_5|$.

\noindent (2) If $G$ has a periodic affine map cycle of length greater than $\frac{1}{4}|G|$, then $G$ is solvable. On the other hand, $\Alt_5$ has a periodic affine map cycle of length $15=\frac{1}{4}|\Alt_5|$.
\end{corrollary}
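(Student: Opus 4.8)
The plan is to read both parts off Theorem~\ref{mainTheo1}, the crucial points being the numerical identities $(\frac{1}{10})^{E_1}=60$ and $(\frac{1}{4})^{E_2}=3600=60^2$. For part~(1), suppose $G$ has an automorphism cycle of length $L>\frac{1}{10}|G|$ and put $\rho:=L/|G|>\frac{1}{10}$. Theorem~\ref{mainTheo1}(1) yields $[G:\Rad(G)]\leq\rho^{E_1}$, and since $E_1<0$ the function $t\mapsto t^{E_1}$ is strictly decreasing on $(0,1)$, so $[G:\Rad(G)]<(\frac{1}{10})^{E_1}=60$. Now $\bar G:=G/\Rad(G)$ has trivial solvable radical, and any \emph{nontrivial} finite group $H$ with $\Rad(H)=1$ has $|H|\geq 60$: its socle is nontrivial and contains no abelian minimal normal subgroup (that would be a nontrivial solvable normal subgroup of $H$), so $\Soc(H)$ is a direct product of nonabelian finite simple groups, whence $|H|\geq|\Soc(H)|\geq|\Alt_5|=60$. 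Hence $\bar G=1$, i.e.\ $G=\Rad(G)$ is solvable. Sharpness uses $\Aut(\Alt_5)\cong\Sym_5$: conjugation by the order-$6$ element $(1\,2)(3\,4\,5)$ carries the $3$-cycle $(1\,2\,3)$ around an orbit of length $6$ (neither its square $(3\,5\,4)$ nor its cube $(1\,2)$ centralizes $(1\,2\,3)$), giving an automorphism cycle of length $6=\frac{1}{10}|\Alt_5|$.

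For part~(2), suppose $G$ has a periodic affine map cycle of length $L>\frac{1}{4}|G|$ and put $\rho:=L/|G|>\frac{1}{4}$. First reduce to $\Rad(G)=1$: since $\Rad(G)$ is characteristic, $\A_{g_0,\alpha}$ induces a periodic affine map $\bar\A$ of $\bar G:=G/\Rad(G)$ with $\pi\circ\A_{g_0,\alpha}=\bar\A\circ\pi$ for the quotient map $\pi\colon G\to\bar G$; hence a cycle $C$ of $\A_{g_0,\alpha}$ of length $\ell$ has image $\pi(C)$ a cycle of $\bar\A$ of some length $\ell'$ dividing $\ell$, and $C$ meets each of the $\ell'$ fibres of $\pi$ above $\pi(C)$ in $\ell/\ell'\leq|\Rad(G)|$ points, so that $\ell/|G|\leq\ell'/|\bar G|$. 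Thus it suffices to show that no nontrivial finite group $H$ with $\Rad(H)=1$ has a periodic affine map cycle of length $>\frac{1}{4}|H|$. For such $H$, Theorem~\ref{mainTheo1}(2) forces $|H|=[H:\Rad(H)]<(\frac{1}{4})^{E_2}=3600$, and because $|\Alt_5\times\Alt_5|=3600$ the socle of $H$ has exactly one nonabelian simple factor, so $H$ is almost simple and, by CFSG, its socle is one of $\Alt_5,\PSL_2(7),\Alt_6,\PSL_2(8),\PSL_2(11),\PSL_2(13),\PSL_2(17),\Alt_7,\PSL_2(19)$. It remains to verify for each of the finitely many resulting $H$ that its longest periodic affine map cycle has length at most $\frac{1}{4}|H|$; this is a finite computation (which one expects to be shortened greatly by general bounds on periodic affine cycle lengths in almost simple groups). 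For sharpness, take $G=\Alt_5$, $g_0=(1\,2\,3\,4\,5)$ and $\alpha$ conjugation by $(1\,2\,3)$: a short computation shows that $\A_{g_0,\alpha}$ sends the identity $e$ after $1$, $3$ and $5$ steps to $(1\,2\,3\,4\,5)$, to the $5$-cycle $(1\,4\,3\,5\,2)$, and to the $3$-cycle $(1\,2\,3)$, and after $15$ steps back to $e$; since $1,3,5$ are the proper divisors of $15$, the orbit of $e$ has length $15=\frac{1}{4}|\Alt_5|$.

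The one step demanding genuine effort is the finite check in part~(2): pinning down the above list of almost simple groups and bounding the length of their longest periodic affine map cycles by a quarter of the group order. Part~(1), the reduction to trivial solvable radical, and the two extremal $\Alt_5$-examples are all routine once one has noted the values $(\frac{1}{10})^{E_1}=60$ and $(\frac{1}{4})^{E_2}=3600$.
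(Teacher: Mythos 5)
Part (1) of your argument is correct and coincides with the paper's: monotonicity of $t\mapsto t^{E_1}$ gives $[G:\Rad(G)]<(\tfrac{1}{10})^{E_1}=60$, and a nontrivial group with trivial solvable radical has order at least $60$. Your explicit $\Sym_5$-conjugation example for the sharpness of $\tfrac{1}{10}$ is also fine (the paper instead just cites Lemma \ref{mainLem}), as is your direct verification of the length-$15$ affine cycle in $\Alt_5$ (the paper's route would be Lemma \ref{lcmLem}(1) plus Theorem \ref{regularTheo}).

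In part (2), however, there is a genuine gap: the step you yourself flag as ``the one step demanding genuine effort'' --- verifying that no relevant group of order below $3600$ admits a periodic affine map cycle longer than a quarter of its order --- is precisely the mathematical content of part (2), and you do not carry it out. You only assert that it is ``a finite computation'' over all almost simple $H$ with $|H|<3600$. The paper makes this check feasible by two reductions you do not invoke: first, since $\Soc(H)$ is characteristic in $H$, Lemma \ref{transferLem}(2) gives $\lambda_{\aff}(H)\leq\lambda_{\aff}(\Soc(H))$, so one only needs to check the simple socles $S$ themselves rather than every almost simple overgroup; second, Corollary \ref{regularCor}(1,ii) converts the problem of determining $\Lambda_{\aff}(S)$ into computing $\meo(\Hol(S))$, a maximum element order, which the paper then evaluates with GAP and tabulates (e.g.\ $\lambda_{\aff}(\PSL_2(7))=\tfrac16$, $\lambda_{\aff}(\Alt_7)=\tfrac{1}{42}$). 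Without some such reduction and the actual numerical check, your proof of (2) is incomplete. One point in your favour: your list of candidate socles is the correct one --- there are nine nonabelian simple groups of order less than $3600$, including $\PSL_2(19)$ of order $3420$, which the paper's proof omits from its count of ``precisely eight'' (harmlessly, since $\lambda_{\aff}(\PSL_2(19))=\tfrac{380}{3420}=\tfrac19<\tfrac14$).
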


As for the conditions of the third and fourth kind mentioned above, we cannot expect results as strong as Theorem \ref{mainTheo1} (see the discussion after Lemma \ref{fdsLcmLem}), but we have the following:

\begin{theoremm}\label{mainTheo2}
(1) Let $\epsilon>0$ be fixed. Then for every $\xi>0$, there exists a constant $K(\epsilon,\xi)$ such that for all finite groups $G$ having an automorphism cycle of length at least $|G|^{\frac{1}{3}+\epsilon}$, we have $[G:\Rad(G)]\leq\max(K(\epsilon,\xi),|G|^{1-\frac{3}{2}\epsilon+\xi})$. In particular, under a condition of the third kind with $e:=\frac{1}{3}+\epsilon$, for all $\xi>0$, we have $|G|^{\frac{3}{2}\epsilon-\xi}=o(|\Rad(G)|)$ for $|G|\to\infty$.

\noindent (2) Let $\epsilon>0$ be fixed. Then for every $\xi>0$, there exists a constant $K_{\aff}(\epsilon,\xi)$ such that for all finite groups $G$ having a periodic affine map cycle of length at least $|G|^{\frac{2}{3}+\epsilon}$, we have $[G:\Rad(G)]\leq\max(K_{\aff}(\epsilon,\xi),|G|^{1-3\epsilon+\xi})$. In particular, under a condition of the fourth kind with $e:=\frac{2}{3}+\epsilon$, for all $\xi>0$, we have $|G|^{3\epsilon-\xi}=o(|\Rad(G)|)$ for $|G|\to\infty$.

\noindent (3) There exists a sequence $(G_n)_{n\in\mathbb{N}}$ of finite groups $G_n$ such that $|\Rad(G_n)|=1$ for all $n\in\mathbb{N}$, $|G_n|\to\infty$ for $n\to\infty$, and for all $n\in\mathbb{N}$, $G_n$ has an automorphism cycle of length greater than $|G_n|^{\frac{1}{3}}$ and a periodic affine map cycle of length greater than $|G_n|^{\frac{2}{3}}$.
\end{theoremm}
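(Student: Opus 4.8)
The plan is to treat parts (1) and (2) by a reduction to the semisimple quotient $\bar G:=G/\Rad(G)$ followed by a CFSG input, mirroring the strategy behind Theorem \ref{mainTheo1}, and to treat part (3) by an explicit family. For the reduction: $\Rad(G)$ is characteristic in $G$, so any automorphism $\alpha$ of $G$ induces an automorphism $\bar\alpha$ of $\bar G$; moreover, since the quotient map $\pi\colon G\to\bar G$ is a homomorphism, any periodic affine map $\A_{g_0,\alpha}$ satisfies $\pi\circ\A_{g_0,\alpha}=\A_{\pi(g_0),\bar\alpha}\circ\pi$, so it likewise descends to the periodic affine map $\A_{\pi(g_0),\bar\alpha}$ of $\bar G$. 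The $\lcm$-type observation (cf.\ the discussion around Lemma \ref{fdsLcmLem}) is that $\pi$ maps a $\langle\sigma\rangle$-orbit onto a single orbit of the descended map, equivariantly and with all fibres lying in one coset of $\Rad(G)$ and hence of size at most $|\Rad(G)|$; thus a cycle of $\sigma$ on $G$ of length $L$ forces a cycle of the descended map on $\bar G$ of length at least $L/|\Rad(G)|=L\cdot[G:\Rad(G)]/|G|$.

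The crux, and what I expect to be the main obstacle, is a CFSG-based bound on cycle lengths for radical-free groups: for every $\delta>0$ there should be a constant $C_\delta$ such that any finite $H$ with $\Rad(H)=1$ has all automorphism cycles of length $\le C_\delta|H|^{1/3+\delta}$ and all periodic affine map cycles of length $\le C_\delta|H|^{2/3+\delta}$. Granting this, parts (1) and (2) follow by elementary manipulation: applying it to $\bar G$ and combining with $L\ge|G|^{1/3+\epsilon}$ (resp.\ $L\ge|G|^{2/3+\epsilon}$) gives $[G:\Rad(G)]^{2/3-\delta}\le C_\delta|G|^{2/3-\epsilon}$ (resp.\ $[G:\Rad(G)]^{1/3-\delta}\le C_\delta|G|^{1/3-\epsilon}$), whence, choosing $\delta$ small in terms of $\xi$, one obtains $[G:\Rad(G)]\le\max\bigl(K(\epsilon,\xi),|G|^{1-\frac32\epsilon+\xi}\bigr)$, respectively $[G:\Rad(G)]\le\max\bigl(K_{\aff}(\epsilon,\xi),|G|^{1-3\epsilon+\xi}\bigr)$, with the finitely many small exceptional $H$ and the constant absorbed into the $K$'s. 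Establishing the CFSG bound is the real work: one runs through the families of finite simple groups, using estimates for $\meo(S)$, $\meo(\Aut(S))$ and $\meo(\Hol(S))$ to see that $\PSL_2(q)$ and $\PGL_2(q)$ are essentially extremal — with maximal automorphism cycle of order $|S|^{1/3}$ and maximal periodic affine map cycle of order $|S|^{2/3}$, the sharpest single case for the fixed-fraction version being $\Alt_5$ with its automorphism cycle of length $6=\tfrac{1}{10}|\Alt_5|$ — while all higher-rank groups of Lie type, all alternating groups, and the sporadic groups lie strictly below these exponents; one then passes from simple $S$ to general semisimple $H=S_1\times\dots\times S_r$, whose automorphism group and holomorph may be much larger wreath products, and checks that the factor-permuting contribution is subpolynomial in $|H|$ and hence absorbed by $\delta$. (The families $\prod_i\PSL_2(p_i)$ over distinct small primes $p_i$, which carry an automorphism cycle of length roughly $\prod_i p_i\approx 2^{r/3}|H|^{1/3}$, show that $C_\delta|H|^{\delta}$ cannot be replaced by an absolute constant, so the $\xi$ and $K$ in the statement are genuinely needed.)

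For part (3) I would take $G_n:=\PGL_2(p_n)$ for a strictly increasing sequence of primes $p_n\ge 5$. Each $G_n$ is almost simple with $\Soc(G_n)=\PSL_2(p_n)$, so $\Rad(G_n)=1$, and $|G_n|=p_n(p_n^2-1)\to\infty$. Fix a unipotent element $u$ of order $p_n$ and an element $t$ of order $p_n+1$ in a non-split torus of $G_n$. Conjugation by $u$ is an (inner) automorphism of $G_n$ whose orbit of any element not commuting with $u$ has length $\ord(u)=p_n$ (as $\ord(u)$ is prime), and $p_n>(p_n^3-p_n)^{1/3}=|G_n|^{1/3}$. For the affine map, let $\alpha$ be conjugation by $t$ and set $\A:=\A_{ut^{-1},\alpha}$, so that $\A(s)=ust^{-1}$ and $\A^k(s)=u^k s t^{-k}$; then $\A^k(1)=1$ iff $u^k=t^k$, and since $\langle u\rangle\cap\langle t\rangle=\{1\}$ (coprime orders) this forces $p_n\mid k$ and $(p_n+1)\mid k$, so the $\A$-orbit of $1$ is a cycle of length exactly $\lcm(p_n,p_n+1)=p_n(p_n+1)$. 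Finally $p_n(p_n+1)>|G_n|^{2/3}=(p_n^3-p_n)^{2/3}$, since cubing both sides reduces the inequality to $p_n(p_n+1)>(p_n-1)^2$, i.e.\ $3p_n>1$. Hence $(G_n)$ has all the required properties, and the exponents $\tfrac13$ and $\tfrac23$ realized by this family coincide with the thresholds appearing in (1) and (2), which is precisely why (3) is their sharpness counterpart; the analogous sharpness at the level of absolute fractions is provided by $\Alt_5$ as in Corollary \ref{mainCor}.
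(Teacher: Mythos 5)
Your proposal follows essentially the same route as the paper: reduce modulo $\Rad(G)$ (the paper's Lemma \ref{transferLem}, with $\Lambda_{\aff}(\Rad(G))\leq|\Rad(G)|$), invoke a CFSG-based bound $\Lambda(H)\leq C_\delta|H|^{1/3+\delta}$ (resp.\ $\Lambda_{\aff}(H)\leq C_\delta|H|^{2/3+\delta}$) for semisimple $H$ — which is exactly the combined content of the paper's Lemma \ref{mainLem} and the \enquote{swallowing} argument in Theorem \ref{auxiliaryTheo} — and take $G_n=\PGL_2(p_n)$ for part (3), which you verify correctly and explicitly (the paper just cites Lemma \ref{mainLem} here). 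The one caveat is that the CFSG bound, which you rightly flag as the crux, is asserted and sketched rather than proved — it occupies all of Section \ref{sec4} of the paper — but your formulation of it is correct, your elementary manipulations deducing the stated exponents from it check out, and your identification of the two genuine difficulties (the wreath-product contribution to $\Aut$ and $\Hol$ of a semisimple group, and the accumulation of slightly-super-threshold $\PSL_2(q)$ factors in the socle) is accurate.
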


We remark that we can and will give explicit definitions for $K(\epsilon,\xi)$, $K_{\aff}(\epsilon,\xi)$ and the sequence $(G_n)_{n\in\mathbb{N}}$, see the proof of Theorem \ref{mainTheo2} at the end of Section \ref{sec3}.

\subsection{Outline}\label{subsec1P2}

In Section \ref{sec2}, we present the technical tools needed for proving our main results, some of which were already introduced in \cite{Bor15a} and are therefore given without proof here. None of them make use of the CFSG. It turns out that using (part of) these tools, the proof of all of the main results can be reduced to the proof of one technical lemma, namely Lemma \ref{mainLem}, which we will call the \enquote{main lemma}. It is a statement about maximum cycle lengths of automorphisms and of periodic affine maps of finite nonabelian characteristically simple groups, and its proof will use the CFSG. Section \ref{sec3} shows how the main lemma implies all the main results, and Section \ref{sec4} consists of the proof of the main lemma.

\subsection{Notation and terminology}\label{subsec1P3}

For the readers' convenience, we explain those parts of our notation that may be nonstandard. We denote by $\mathbb{N}$ the set of natural numbers (von Neumann ordinals, including $0$), and by $\mathbb{N}^+$ the set of positive integers. The image of a set $M$ under a function $f$ is denoted by $f[M]$. The identity function on a set $M$ is denoted by $\id_M$, and the symmetric group on $M$ is denoted by $\Sym_M$, except when $M$ is a natural number $n$, in which case we set $\Sym_n:=\Sym_{\{1,\ldots,n\}}$. Similarly, for a natural number $n$, $\Alt_n$ is the alternating group on $\{1,\ldots,n\}$.

Let $G$ be a group. For an element $r\in G$, we denote by $\tau_r:G\rightarrow G,g\mapsto rgr^{-1}$ the inner automorphism of $G$ with respect to $r$. The centralizer and normalizer of a subset $X\subseteq G$ are denoted by $\C_G(X)$ and $\N_G(X)$ respectively. As in Theorems \ref{mainTheo1} and \ref{mainTheo2}, $\Rad(G)$ denotes the solvable radical of $G$. For linguistical simplicity, we will frequently use the following notation, see also \cite[Definitions 1.1.1, 2.1.1 and 2.1.2]{Bor15a} as well as \cite{GMPS15a}:

\begin{deffinition}\label{lambdaDef}
(1) Let $\psi$ be a permutation of a finite set $X$. We denote by $\Lambda(\psi)$ the maximum length of one of the disjoint cycles into which $\psi$ decomposes, and set $\lambda(\psi):=\frac{1}{|X|}\Lambda(\psi)$.

\noindent (2) For a finite group $G$, we set $\Lambda(G):=\max_{\alpha\in\Aut(G)}{\Lambda(\alpha)}$ and $\lambda(G):=\frac{1}{|G|}\Lambda(G)$.

\noindent (3) For a finite group $G$, the group of periodic left-affine maps of $G$ is denoted by $\Aff(G)$. We set $\Lambda_{\aff}(G):=\max_{A\in\Aff(G)}{\Lambda(A)}$ and $\lambda_{\aff}(G):=\frac{1}{|G|}{\Lambda_{\aff}(G)}$.

\noindent (4) For a finite group $G$, we denote by $\meo(G)$ the maximum element order of $G$ and set $\mao(G):=\meo(\Aut(G))$, the maximum automorphism order of $G$.
\end{deffinition}

We also use some notation and terminology from the theory of finite dynamical systems:

\begin{deffinition}\label{fdsDef}
(1) A \textbf{finite dynamical system}, abbreviated henceforth by \textbf{FDS}, is a finite set $X$ together with a map $f:X\rightarrow X$ (a so-called \textbf{self-transformation of $X$}). It is called \textbf{periodic} if and only if $f$ is bijective.

\noindent (2) If $(X_1,f_1),\ldots,(X_r,f_r)$ are FDSs, their \textbf{product} is defined as the FDS $(X_1\times\cdots\times X_r,f_1\times\cdots\times f_r)$, where $f_1\times\cdots\times f_r$ is the self-transformation of $X_1\times\cdots\times X_r$ mapping $(x_1,\ldots,x_r)\mapsto(f_1(x_1),\ldots,f_r(x_r))$.

\noindent (3) If $(X,\psi)$ is a periodic FDS and $x\in X$, we denote the length of the cycle of $x$ under $\psi$ by $\cl_{\psi}(x)$.
\end{deffinition}

Finally, in this paper, $\exp$ mostly denotes the exponent of a group, although in the definition of $\Psi$ in Subsection \ref{subsec2P4}, it denotes the natural exponential function. $\log$ always denotes the natural logarithm, and for $c>1$, the logarithm with base $c$ is denoted by $\log_c$.

\section{Some tools}\label{sec2}

\subsection{Lemmata concerning maximum cycle lengths}\label{subsec2P1}

Lemma \ref{fdsLcmLem} below was used in the proof of \cite[Lemma 2.1.6]{Bor15a}, of which Lemma \ref{productLem} is a part.

\begin{lemmma}\label{fdsLcmLem}
Let $(X_1,\psi_1),\ldots,(X_r,\psi_r)$ be periodic FDSs, and let $x=(x_1,\ldots,x_r)\in X_1\times\cdots\times X_r$. Then $\cl_{\psi_1\times\cdots\times\psi_r}(x)=\lcm(\cl_{\psi_1}(x_1),\ldots,\cl_{\psi_r}(x_r))$. In particular, $\Lambda(\psi_1\times\cdots\times\psi_r)\leq\Lambda(\psi_1)\cdots\Lambda(\psi_r)$.\qed
\end{lemmma}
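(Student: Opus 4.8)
The plan is to prove the two assertions in sequence, starting with the cycle-length identity, since the inequality on $\Lambda$ follows immediately from it. For the identity, fix $x=(x_1,\ldots,x_r)$ and write $\psi=\psi_1\times\cdots\times\psi_r$. The key observation is that for any $k\in\mathbb{N}^+$, we have $\psi^k(x)=(\psi_1^k(x_1),\ldots,\psi_r^k(x_r))$, so $\psi^k(x)=x$ holds if and only if $\psi_i^k(x_i)=x_i$ for every $i\in\{1,\ldots,r\}$. Now $\psi_i^k(x_i)=x_i$ is equivalent to $\cl_{\psi_i}(x_i)\mid k$, because each $\psi_i$ is bijective, hence the orbit of $x_i$ is a genuine cycle of length $\cl_{\psi_i}(x_i)$ and the stabilizer of $x_i$ in the cyclic group generated by $\psi_i$ (restricted to that orbit) consists exactly of the powers whose exponent is divisible by the cycle length. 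Therefore $\psi^k(x)=x$ if and only if $\cl_{\psi_i}(x_i)\mid k$ for all $i$, i.e.\ if and only if $\lcm(\cl_{\psi_1}(x_1),\ldots,\cl_{\psi_r}(x_r))\mid k$. Taking $k$ minimal gives $\cl_{\psi}(x)=\lcm(\cl_{\psi_1}(x_1),\ldots,\cl_{\psi_r}(x_r))$, as claimed.

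For the ``in particular'' clause, I would argue as follows. Pick $x=(x_1,\ldots,x_r)$ realizing the maximum cycle length of $\psi$, so that $\Lambda(\psi)=\cl_{\psi}(x)=\lcm(\cl_{\psi_1}(x_1),\ldots,\cl_{\psi_r}(x_r))$ by the first part. Since the least common multiple of finitely many positive integers divides (indeed is at most) their product, we get
\[
\Lambda(\psi)=\lcm(\cl_{\psi_1}(x_1),\ldots,\cl_{\psi_r}(x_r))\leq\cl_{\psi_1}(x_1)\cdots\cl_{\psi_r}(x_r)\leq\Lambda(\psi_1)\cdots\Lambda(\psi_r),
\]
where the last inequality uses that $\cl_{\psi_i}(x_i)\leq\Lambda(\psi_i)$ by definition of $\Lambda(\psi_i)$ as the maximum cycle length of $\psi_i$. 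This is exactly the desired bound $\Lambda(\psi_1\times\cdots\times\psi_r)\leq\Lambda(\psi_1)\cdots\Lambda(\psi_r)$.

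There is essentially no serious obstacle here: the statement is elementary and the only point requiring any care is the justification that $\psi_i^k(x_i)=x_i\iff\cl_{\psi_i}(x_i)\mid k$, which is just the standard fact that in a finite cyclic group the order of an element is the smallest positive exponent killing it, together with divisibility. One should double-check that bijectivity (periodicity of the FDS) is genuinely used: it guarantees that the forward orbit of $x_i$ is a cycle, so that ``$x_i$ returns to itself'' is governed by a single well-defined period $\cl_{\psi_i}(x_i)$; without bijectivity the orbit could be eventually periodic with a pre-period, and the statement would fail. I would mention this briefly so the reader sees where the hypothesis enters, and otherwise keep the write-up to a few lines.
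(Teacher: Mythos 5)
Your proof is correct and is exactly the standard argument; the paper itself omits the proof (stating the lemma with a \qed and referring to its use in \cite{Bor15a}), and the route you take --- observing that $\psi^k(x)=x$ iff $\cl_{\psi_i}(x_i)\mid k$ for all $i$, hence the cycle length is the lcm, and then bounding the lcm by the product of the $\Lambda(\psi_i)$ --- is the intended one. Your remark about where bijectivity enters (ruling out a pre-period) is a sensible addition and nothing is missing.
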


We remark that by Lemma \ref{fdsLcmLem}, any condition on finite groups $G$ of the form $\lambda(G)\geq f(|G|)$, where $f:\mathbb{N}^+\rightarrow\left[0,1\right]$ is such that $f(n)\to 0$ for $n\to\infty$, is not strong enough to imply that the index $[G:\Rad(G)]$ is bounded from above. Indeed, under such a condition, any finite group $G_0$ (in particular, any nonabelian finite simple group $G_0$) may occur as a direct factor of $G$. To see this, let $p$ be a prime which is so large that $f(p|G_0|)\leq\frac{1}{2|G_0|}$. Considering the product automorphism $\id_{G_0}\times\alpha$ of $G:=G_0\times\mathbb{Z}/p\mathbb{Z}$, where $\alpha\in\Aut(\mathbb{Z}/p\mathbb{Z})$ is the multiplication by any primitive root modulo $p$, it is not difficult to see by Lemma \ref{fdsLcmLem} that \[\lambda(G)\geq\lambda(\id_{G_0}\times\alpha)=\frac{p-1}{p|G_0|}=(1-\frac{1}{p})\cdot\frac{1}{|G_0|}\geq\frac{1}{2|G_0|}\geq f(|G|).\]

As in \cite{Bor15a}, we say that a family $(G_i)_{i\in I}$ of groups has the \textit{splitting property} if and only if for every automorphism $\alpha$ of $\prod_{i\in I}{G_i}$, there exists a family $(\alpha_i)_{i\in I}$ such that $\alpha_i$ is an automorphism of $G_i$ for $i\in I$, and $\alpha((g_i)_{i\in I})=(\alpha_i(g_i))_{i\in I}$ for all $(g_i)_{i\in I}\in\prod_{i\in I}{G_i}$.

\begin{lemmma}\label{productLem}
Let $(G_1,\ldots,G_r)$ be a tuple of finite groups with the splitting property. Then:

\noindent (1) $\Lambda(G_1\times\cdots\times G_r)\leq\Lambda(G_1)\cdots\Lambda(G_r)$.

\noindent (2) For every periodic affine map $A$ of $G_1\times\cdots\times G_r$, there exists a tuple $(A_1,\ldots,A_r)$ such that $A_i\in\Aff(G_i)$ for $i=1,\ldots,r$ and $A=A_1\times\cdots\times A_r$. In particular, $\Lambda_{\aff}(G_1\times\cdots\times G_r)\leq\Lambda_{\aff}(G_1)\cdots\Lambda_{\aff}(G_r)$.\qed
\end{lemmma}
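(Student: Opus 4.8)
The plan is to reduce both parts to Lemma~\ref{fdsLcmLem} via the splitting property, so that the only real content is bookkeeping. For part~(1), let $\alpha$ be an arbitrary automorphism of $G:=G_1\times\cdots\times G_r$. By the splitting property there exist $\alpha_i\in\Aut(G_i)$ with $\alpha=\alpha_1\times\cdots\times\alpha_r$, whence Lemma~\ref{fdsLcmLem} gives $\Lambda(\alpha)\leq\Lambda(\alpha_1)\cdots\Lambda(\alpha_r)\leq\Lambda(G_1)\cdots\Lambda(G_r)$. Taking the maximum over all $\alpha\in\Aut(G)$ yields the claim.

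For part~(2), write a periodic affine map $A$ of $G$ as $A=\A_{g_0,\alpha}$ with $g_0=(g_0^{(1)},\ldots,g_0^{(r)})\in G$ and $\alpha\in\Aut(G)$. Applying the splitting property to $\alpha$, write $\alpha=\alpha_1\times\cdots\times\alpha_r$ with $\alpha_i\in\Aut(G_i)$, and set $A_i:=\A_{g_0^{(i)},\alpha_i}\in\Aff(G_i)$. A componentwise computation then shows
\[
A\big((g_1,\ldots,g_r)\big)=g_0\,\alpha\big((g_1,\ldots,g_r)\big)=\big(g_0^{(1)}\alpha_1(g_1),\ldots,g_0^{(r)}\alpha_r(g_r)\big)=\big(A_1(g_1),\ldots,A_r(g_r)\big),
\]
so that $A=A_1\times\cdots\times A_r$. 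The bound on $\Lambda_{\aff}$ follows exactly as before: Lemma~\ref{fdsLcmLem} gives $\Lambda(A)\leq\Lambda(A_1)\cdots\Lambda(A_r)\leq\Lambda_{\aff}(G_1)\cdots\Lambda_{\aff}(G_r)$, and one takes the maximum over $A\in\Aff(G)$.

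The one point deserving a remark is that the translation part $g_0$ of a periodic affine map of a direct product decomposes into components automatically, while the automorphism part $\alpha$ does not decompose in general; this is precisely where the splitting-property hypothesis is used, just as in part~(1). There is no genuine obstacle here — the substantive input, namely the multiplicativity of maximal cycle lengths under products of periodic FDSs, is already isolated in Lemma~\ref{fdsLcmLem} — so the proof amounts to invoking the splitting property, checking the componentwise identity above, and combining the two.
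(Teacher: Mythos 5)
Your proof is correct and follows exactly the intended route: the paper itself gives no proof here (it cites \cite[Lemma 2.1.6]{Bor15a}) but explicitly notes that Lemma~\ref{fdsLcmLem} is the key ingredient, and your argument — splitting the automorphism part via the hypothesis, decomposing the translation part componentwise, and then applying the product/lcm bound — is precisely that argument. No gaps.
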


The following is a part of \cite[Lemma 2.1.4]{Bor15a}:

\begin{lemmma}\label{transferLem}
Let $G$ be a finite group, $N$ a characteristic subgroup of $G$. Then:

\noindent (1) $\Lambda(G)\leq\Lambda_{\aff}(N)\cdot\Lambda(G/N)$, or equivalently, $\lambda(G)\leq\lambda_{\aff}(N)\cdot\lambda(G/N)$. In particular, $\lambda(G/N)\geq\lambda(G)$.

\noindent (2) $\Lambda_{\aff}(G)\leq\Lambda_{\aff}(N)\cdot\Lambda_{\aff}(G/N)$, or equivalently, $\lambda_{\aff}(G)\leq\lambda_{\aff}(N)\cdot\lambda_{\aff}(G/N)$. In particular, $\lambda_{\aff}(G)\leq\min(\lambda_{\aff}(N),\lambda_{\aff}(G/N))$.\qed
\end{lemmma}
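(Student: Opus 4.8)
The plan is to prove (2) first and obtain (1) as the special case $g_0=1$ (where $A=\alpha$), using throughout that $N$ being characteristic makes it invariant under every automorphism of $G$. Fix $A=\A_{g_0,\alpha}\in\Aff(G)$ with $\Lambda(A)=\Lambda_{\aff}(G)$, and let $g\in G$ lie on a cycle of $A$ of length $\Lambda_{\aff}(G)=\cl_A(g)$. Since $\alpha[N]=N$ and $N\trianglelefteq G$, $\alpha$ restricts to $\alpha|_N\in\Aut(N)$ and $A$ descends to a well-defined map $\bar A\colon G/N\to G/N$, $hN\mapsto g_0\alpha(h)N$, which equals $\A_{g_0N,\bar\alpha}\in\Aff(G/N)$, where $\bar\alpha\in\Aut(G/N)$ is induced by $\alpha$ (here one uses normality of $N$, so that $g_0\alpha(h)N=(g_0N)(\alpha(h)N)$). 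Put $m:=\cl_{\bar A}(gN)$; since $\bar A\circ\pi=\pi\circ A$ for the quotient map $\pi$, we get $m\mid\cl_A(g)$ and $m\leq\Lambda_{\aff}(G/N)$.

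The core of the argument is the factorization $\cl_A(g)=m\cdot c$, where $c$ is the cycle length of $g$ under the first-return map of $A^m$ to the coset $gN$, together with the identification of that return map with a periodic affine map of $N$. First, a one-line induction on $j$ gives $A^j(gh)=A^j(g)\,\alpha^j(h)$ for all $h\in N$ and $j\geq 0$. Since $\bar A^m(gN)=gN$, write $A^m(g)=gn_0$ with $n_0\in N$; then $A^m(gh)=g\,n_0\,\alpha^m(h)=g\cdot\A_{n_0,\alpha^m|_N}(h)$ for $h\in N$, so under the bijection $N\to gN$, $h\mapsto gh$, the return map $A^m|_{gN}$ corresponds to $\A_{n_0,\alpha^m|_N}\in\Aff(N)$ (note $\alpha^m|_N=(\alpha|_N)^m\in\Aut(N)$). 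Now observe that the orbit $O$ of $g$ under $A$ surjects onto the orbit of $gN$ under $\bar A$ (of size $m$) with all fibres of equal size $c$, and that $O\cap gN=\{A^{jm}(g):j\in\mathbb{Z}\}$, because $A^i(g)\in gN$ forces $m\mid i$; the latter set is precisely the $\langle A^m\rangle$-orbit of $g$, which stays in $gN$, so $c=\cl_{\A_{n_0,\alpha^m|_N}}(1_N)\leq\Lambda_{\aff}(N)$. Hence $\Lambda_{\aff}(G)=\cl_A(g)=mc\leq\Lambda_{\aff}(G/N)\cdot\Lambda_{\aff}(N)$, the first inequality of (2); specializing $g_0=1$ replaces $A$ by $\alpha$ and $\bar A$ by the automorphism $\bar\alpha$, so that $m\leq\Lambda(G/N)$, yielding $\Lambda(G)\leq\Lambda(G/N)\cdot\Lambda_{\aff}(N)$, the first inequality of (1).

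Finally, the two \enquote{equivalently} reformulations follow by dividing through by $|G|=|N|\cdot|G/N|$, and the \enquote{in particular} clauses follow because $\lambda_{\aff}(K)\leq 1$ for any finite group $K$ (a periodic affine map permutes $K$, so its longest cycle has length at most $|K|$): thus $\lambda(G)\leq\lambda_{\aff}(N)\lambda(G/N)\leq\lambda(G/N)$ for (1), and $\lambda_{\aff}(G)\leq\lambda_{\aff}(N)\lambda_{\aff}(G/N)\leq\min(\lambda_{\aff}(N),\lambda_{\aff}(G/N))$ for (2). The step I expect to require the most care is the factorization $\cl_A(g)=mc$ — concretely, verifying that $A^i(g)\in gN\iff m\mid i$ (so that $O\cap gN$ is genuinely a single $\langle A^m\rangle$-orbit) and that $\pi|_O$ has constant fibre size; once these are in place, the remainder is routine bookkeeping with cosets and the identity $A^j(gh)=A^j(g)\alpha^j(h)$.
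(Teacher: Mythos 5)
Your proof is correct: the key factorization $\cl_A(g)=m\cdot c$ via the induced affine map on $G/N$ and the first-return map on the coset $gN$ (which you correctly identify with $\A_{n_0,(\alpha|_N)^m}\in\Aff(N)$) is sound, and the identity $A^j(gh)=A^j(g)\alpha^j(h)$ together with the observation that $A^i(g)\in gN$ iff $m\mid i$ makes the bookkeeping airtight. The paper itself gives no proof here — it imports the statement from \cite[Lemma 2.1.4]{Bor15a} — but your fibration/return-map argument is exactly the natural one behind that lemma, so there is nothing to flag.
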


We will now prove some more results that are useful for the study of $\Lambda_{\aff}$-values of finite groups. For a more concise formulation, we define:

\begin{deffinition}\label{shiftDef}
Let $G$ be a finite group, $x\in G$, $\alpha$ an automorphism of $G$, $n\in\mathbb{N}^+$.

\noindent (1) The element $\sh_{\alpha}^{(n)}(x):=x\alpha(x)\cdots\alpha^{n-1}(x)\in G$ is called the \textbf{$n$-th shift of $x$ under $\alpha$}.

\noindent (2) The element $\sh_{\alpha}(x):=\sh_{\alpha}^{(\ord(\alpha))}\in G$ is called the \textbf{shift of $x$ under $\alpha$}.
\end{deffinition}

The following calculation rules for shifts are easy to show:

\begin{lemmma}\label{shiftLem}
Let $G$ be a finite group, $x\in G$, $\alpha$ an automorphism of $G$.

\noindent (1) $\alpha(\sh_{\alpha}(x))=x\sh_{\alpha}(x)x^{-1}$.

\noindent (2) If $d\in\mathbb{N}^+$ is such that $\cl_{\alpha}(x)\mid d\mid \ord(\alpha)$, then $\sh_{\alpha}(x)=\sh_{\alpha}^{(d)}(x)^{\frac{\ord{\alpha}}{d}}$.\qed
\end{lemmma}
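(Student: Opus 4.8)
The plan is to obtain both identities by direct manipulation of the defining product $\sh_{\alpha}(x)=x\alpha(x)\cdots\alpha^{\ord(\alpha)-1}(x)$; nothing deep is involved (this is why the paper calls the rules \enquote{easy to show}), so the write-up is mostly a matter of careful bookkeeping with non-commutative products.

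For (1), write $n:=\ord(\alpha)$ and record the obvious two-sided recursion for shifts: for every $k\in\mathbb{N}^+$,
\[\sh_{\alpha}^{(k+1)}(x)=x\cdot\alpha\bigl(\sh_{\alpha}^{(k)}(x)\bigr)=\sh_{\alpha}^{(k)}(x)\cdot\alpha^{k}(x),\]
the first equality peeling off the leftmost factor of $x\alpha(x)\cdots\alpha^{k}(x)$ and the second the rightmost one. Now specialize $k:=n$. Since $\alpha^{n}=\id_{G}$ we have $\alpha^{n}(x)=x$ and $\sh_{\alpha}^{(n)}(x)=\sh_{\alpha}(x)$, so the recursion collapses to $x\cdot\alpha(\sh_{\alpha}(x))=\sh_{\alpha}(x)\cdot x$; rearranging gives the conjugacy relation of assertion (1). (Equivalently: applying the homomorphism $\alpha$ to the product $x\alpha(x)\cdots\alpha^{n-1}(x)$ and replacing the new last factor $\alpha^{n}(x)$ by $x$ cyclically rotates the factors, and a cyclic rotation of a product is always a conjugate of it by a power of the rotated factor.)

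For (2), again put $n:=\ord(\alpha)$ and set $m:=n/d$, which is a positive integer because $d\mid n$. Partition the index set $\{0,1,\dots,n-1\}$ into the $m$ consecutive blocks $\{jd,jd+1,\dots,jd+d-1\}$, $j=0,\dots,m-1$, and pull the common power $\alpha^{jd}$ out of the $j$-th block:
\[\sh_{\alpha}(x)=\prod_{j=0}^{m-1}\ \prod_{i=0}^{d-1}\alpha^{jd+i}(x)=\prod_{j=0}^{m-1}\alpha^{jd}\Bigl(\prod_{i=0}^{d-1}\alpha^{i}(x)\Bigr)=\prod_{j=0}^{m-1}\alpha^{jd}\bigl(\sh_{\alpha}^{(d)}(x)\bigr).\]
The hypothesis $\cl_{\alpha}(x)\mid d$ says precisely that $\alpha^{d}(x)=x$, hence $\alpha^{d}$ fixes each factor $\alpha^{i}(x)$ of $\sh_{\alpha}^{(d)}(x)$ (because $\alpha^{d}(\alpha^{i}(x))=\alpha^{i}(\alpha^{d}(x))=\alpha^{i}(x)$), so $\alpha^{d}$, and therefore every $\alpha^{jd}=(\alpha^{d})^{j}$, fixes $\sh_{\alpha}^{(d)}(x)$. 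Substituting this into the displayed product collapses it to $\sh_{\alpha}^{(d)}(x)^{m}=\sh_{\alpha}^{(d)}(x)^{\ord(\alpha)/d}$, as claimed. Note that $d\mid\ord(\alpha)$ is used only so that the exponent is an integer, whereas $\cl_{\alpha}(x)\mid d$ is exactly what makes the blocks collapse; taking $d:=\cl_{\alpha}(x)$ recovers the presumably intended application $\sh_{\alpha}(x)=\sh_{\alpha}^{(\cl_{\alpha}(x))}(x)^{\ord(\alpha)/\cl_{\alpha}(x)}$.

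The only obstacle worth mentioning is purely notational: keeping the non-commutative products in the right order when peeling factors off either end in (1) and when reindexing the blocks in (2). No use of the CFSG, of Lemma~\ref{transferLem}, or of any structure theory is required.
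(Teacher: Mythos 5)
Your proof of part (2) is correct and complete: the block decomposition of the index set, pulling $\alpha^{jd}$ out of the $j$-th block, and the observation that $\cl_{\alpha}(x)\mid d$ forces $\alpha^{d}$ (hence every $\alpha^{jd}$) to fix $\sh_{\alpha}^{(d)}(x)$ are exactly what is needed; the paper gives no proof of this lemma, and this is surely the intended argument.

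In part (1), however, your final step does not deliver the identity as printed. The two-sided recursion and the specialization $k=n$ are correct and give $x\cdot\alpha(\sh_{\alpha}(x))=\sh_{\alpha}(x)\cdot x$; but rearranging this yields $\alpha(\sh_{\alpha}(x))=x^{-1}\sh_{\alpha}(x)\,x$, i.e.\ conjugation by $x^{-1}$, whereas assertion (1) as displayed claims $\alpha(\sh_{\alpha}(x))=x\sh_{\alpha}(x)x^{-1}$. These differ unless $x$ commutes with $\sh_{\alpha}(x)$, so the sentence \enquote{rearranging gives the conjugacy relation of assertion (1)} is, as written, a non sequitur. (Your own parenthetical makes the point: cyclically moving the \emph{first} factor of a product to the end conjugates the product by the \emph{inverse} of that factor.) What your computation actually proves is $\alpha(\sh_{\alpha}(x))=x^{-1}\sh_{\alpha}(x)x$, equivalently $\sh_{\alpha}(x)=x\,\alpha(\sh_{\alpha}(x))\,x^{-1}$; the displayed statement evidently has the conjugation inverted, and you should have flagged the discrepancy rather than silently identifying the two sides. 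The slip is harmless downstream: in the one place where (1) is used, Lemma~\ref{centralizerLem}, the corrected identity gives $r\sh_{\tau_r}(x)r^{-1}=x^{-1}\sh_{\tau_r}(x)x$ and hence $xr\in\C_G(\sh_{\tau_r}(x))$ in place of $x^{-1}r$, and the \enquote{in particular} conclusion there ($x\in H$ if and only if $r\in H$ whenever $\C_G(\sh_{\tau_r}(x))\subseteq H$) holds equally well with $xr$.
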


Definition \ref{shiftDef} is motivated by the following: It is well-known that there is natural isomorphism between $\Aff(G)$, the product, inside $\Sym_G$, of the image of the left regular representation of $G$ with $\Aut(G)$, and the holomorph of $G$, $\Hol(G)=G\rtimes\Aut(G)$. The isomorphism is simply given by the map $\Aff(G)\rightarrow\Hol(G),\A_{x,\alpha}\mapsto(x,\alpha)$. It is therefore clear that $\ord(\alpha)\mid\ord(\A_{x,\alpha})$ for all $x\in G$ and all $\alpha\in\Aut(G)$, and thus $\ord(\A_{x,\alpha})=\ord(\alpha)\cdot\ord(\A_{x,\alpha}^{\ord(\alpha)})$. However, easy computations reveal that under said natural isomorphism, $\A_{x,\alpha}^{\ord(\alpha)}$ corresponds to the element $\sh_{\alpha}(x)\in G$. This shows that in general, we have the following formula for computing orders of periodic affine maps of finite groups: \[\ord(\A_{x,\alpha})=\ord(\alpha)\cdot\ord(\sh_{\alpha}(x)).\]

When $\psi$ is a permutation of a finite set $X$ and $n\in\mathbb{N}^+$, we say that an orbit $O$ of the action of $\psi$ on $X$ \textit{induces} an orbit $\tilde{O}$ of $\psi^n$ (or that $\tilde{O}$ \textit{stems from} $O$) if and only if $\tilde{O}\subseteq O$, in which case $|\tilde{O}|=\frac{1}{\gcd(n,|O|)}|O|$. Every orbit of $\psi$ induces an orbit of $\psi^n$, and every orbit of $\psi^n$ stems from precisely one orbit of $\psi$.

\begin{lemmma}\label{divisorLem}
Let $G$ be a finite group, $x\in G$, $\alpha$ an automorphism of $G$. Then every cycle length of $\A_{x,\alpha}$ is divisible by $\LL_G(x,\alpha):=\ord(\sh_{\alpha}(x))\cdot\prod_{p}{p^{\nu_p(\ord(\alpha))}}$, where $p$ runs through the common prime divisors of $\ord(\sh_{\alpha}(x))$ and $\ord(\alpha)$. In particular, $\LL_G(x,\alpha)\mid|G|$.
\end{lemmma}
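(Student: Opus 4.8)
The plan is to identify a suitable power of $\A_{x,\alpha}$ as a left translation and then read off divisibility by $\LL_G(x,\alpha)$ through a valuation-by-valuation comparison of cycle lengths. Write $m:=\ord(\alpha)$ and $s:=\sh_{\alpha}(x)$, and put $k:=\ord(s)$. A straightforward induction on $n$ gives $\A_{x,\alpha}^n(g)=\sh_{\alpha}^{(n)}(x)\cdot\alpha^n(g)$ for all $g\in G$ and $n\in\mathbb{N}^+$; taking $n=m$ and using $\alpha^m=\id_G$ together with $\sh_{\alpha}^{(m)}(x)=s$ yields $\A_{x,\alpha}^m=\A_{s,\id_G}$, the left translation of $G$ by $s$. (Equivalently, this is the statement recalled just before the lemma that $\A_{x,\alpha}^{\ord(\alpha)}$ corresponds to $\sh_{\alpha}(x)$ under the natural isomorphism $\Aff(G)\cong\Hol(G)$.) The orbits of $\A_{s,\id_G}$ are the cosets $\langle s\rangle g$ for $g\in G$, all of size $k$, so every cycle of $\A_{x,\alpha}^m$ has length exactly $k$.

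Now let $O$ be an arbitrary orbit of $\A_{x,\alpha}$ and put $\ell:=|O|$. By the inducing relation recalled before the lemma, $O$ induces an orbit of $\A_{x,\alpha}^m$ of length $\ell/\gcd(m,\ell)$; by the previous paragraph this equals $k$, so $\ell=kd$ with $d:=\gcd(m,\ell)$, and hence also $d=\gcd(m,kd)$. It remains to verify that $\LL_G(x,\alpha)=k\cdot\prod_{p\mid\gcd(m,k)}p^{\nu_p(m)}$ divides $kd$, which I would do one prime $p$ at a time. If $p\nmid\gcd(m,k)$, then $\nu_p(\LL_G(x,\alpha))=\nu_p(k)\leq\nu_p(k)+\nu_p(d)=\nu_p(\ell)$. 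If $p\mid\gcd(m,k)$, then $\nu_p(k)\geq 1$, so in the equation $\nu_p(d)=\min(\nu_p(m),\nu_p(k)+\nu_p(d))$ the second argument strictly exceeds $\nu_p(d)$; therefore the minimum is attained by the first argument, forcing $\nu_p(d)=\nu_p(m)$ and so $\nu_p(\ell)=\nu_p(k)+\nu_p(m)=\nu_p(\LL_G(x,\alpha))$. Thus $\nu_p(\LL_G(x,\alpha))\leq\nu_p(\ell)$ for every prime $p$, i.e.\ $\LL_G(x,\alpha)\mid\ell$, proving the first assertion. For the \enquote{in particular}, $|G|$ equals the sum of the lengths of the disjoint cycles of $\A_{x,\alpha}$, each of which is divisible by $\LL_G(x,\alpha)$, whence $\LL_G(x,\alpha)\mid|G|$.

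I do not expect a real obstacle; the only point needing care is the valuation bookkeeping when $p\mid\gcd(m,k)$. There one must observe that the self-referential identity $d=\gcd(m,kd)$ does not merely bound $\nu_p(d)$ from above but pins it down to be exactly $\nu_p(m)$ — which is precisely what makes the extra factor $\prod_{p\mid\gcd(m,k)}p^{\nu_p(m)}$ in $\LL_G(x,\alpha)$ match, and shows that on those primes $\ell$ and $\LL_G(x,\alpha)$ carry the same $p$-part.
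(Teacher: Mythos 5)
Your proof is correct and follows essentially the same route as the paper's: both identify $\A_{x,\alpha}^{\ord(\alpha)}$ as left translation by $\sh_{\alpha}(x)$, so every induced orbit has size $\ord(\sh_{\alpha}(x))$, and then compare $p$-adic valuations via the relation $|\tilde{O}|=|O|/\gcd(\ord(\alpha),|O|)$. Your valuation bookkeeping is just a more explicit spelling-out of the paper's "this is only possible if" step, and it checks out.
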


\begin{proof}
Every orbit of $\A_{x,\alpha}^{\ord(\alpha)}$, the left multiplication by $\sh_{\alpha}(x)$ in $G$, has size $\ord(\sh_{\alpha}(x))$, so certainly every cycle length of $\A_{x,\alpha}$ is divisible by $\ord(\sh_{\alpha}(x))$. In particular, if $p$ is a common prime divisor of $\ord(\sh_{\alpha}(x))$ and $\ord(\alpha)$, and $O$ is any orbit of $\A_{x,\alpha}$, then $p\mid|O|$, but $p^{\nu_p(\ord(\sh_{\alpha}(x)))}$ still divides $|\tilde{O}|$, where $\tilde{O}$ is the orbit of $\A_{x,\alpha}^{\ord(\alpha)}$ induced by $O$. This is only possible if $|O|$ actually is divisible by $p^{\nu_p(\ord(\sh_{\alpha}(x)))+\nu_p(\ord(\alpha))}$, and the assertion follows.
\end{proof}

\begin{lemmma}\label{centralizerLem}
Let $G$ be a finite group, $x,r\in G$. Then $x^{-1}r\in\C_G(\sh_{\tau_r}(x))$. In particular, if, for some subgroup $H\leq G$, $\C_G(\sh_{\tau_r}(x))\subseteq H$, then $x\in H$ if and only if $r\in H$.
\end{lemmma}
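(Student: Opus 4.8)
The plan is to reduce everything to Lemma~\ref{shiftLem}(1). Write $s := \sh_{\tau_r}(x)$ for brevity. Applying Lemma~\ref{shiftLem}(1) with the automorphism $\alpha = \tau_r$ gives $\tau_r(s) = x s x^{-1}$. Since $\tau_r(s) = r s r^{-1}$ by definition of the inner automorphism $\tau_r$, this is the identity $r s r^{-1} = x s x^{-1}$ in $G$.

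From here the first assertion is pure rearrangement. Conjugating both sides of $r s r^{-1} = x s x^{-1}$ by $x^{-1}$ (that is, multiplying on the left by $x^{-1}$ and on the right by $x$) yields $x^{-1} r s r^{-1} x = s$. The left-hand side equals $(x^{-1}r)\, s\, (x^{-1}r)^{-1}$, so $x^{-1}r \in \C_G(s) = \C_G(\sh_{\tau_r}(x))$, as claimed.

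For the ``in particular'' statement, suppose $\C_G(\sh_{\tau_r}(x)) \subseteq H$ for some subgroup $H \leq G$. By what was just shown, $x^{-1}r \in \C_G(\sh_{\tau_r}(x)) \subseteq H$. Since $H$ is closed under multiplication and taking inverses, $x \in H$ forces $r = x\cdot(x^{-1}r) \in H$, and $r \in H$ forces $x = r\cdot(x^{-1}r)^{-1} \in H$; hence $x \in H$ if and only if $r \in H$.

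I do not expect a genuine obstacle: once Lemma~\ref{shiftLem}(1) is available, the argument is a two-line computation, the only point requiring care being the left/right bookkeeping in the conjugation step. One could alternatively avoid citing Lemma~\ref{shiftLem}(1) altogether and verify $r s r^{-1} = x s x^{-1}$ directly, by writing $\sh_{\tau_r}(x) = \prod_{k=0}^{n-1} r^k x r^{-k}$ with $n = \ord(\tau_r)$ and telescoping, but invoking the shift calculus is cleaner and keeps the proof uniform with the surrounding lemmas.
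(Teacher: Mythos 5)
Your proof is correct and follows exactly the paper's route: both derive $r\,\sh_{\tau_r}(x)\,r^{-1}=\tau_r(\sh_{\tau_r}(x))=x\,\sh_{\tau_r}(x)\,x^{-1}$ from the definition of $\tau_r$ together with Lemma~\ref{shiftLem}(1), and then rearrange to conclude $x^{-1}r\in\C_G(\sh_{\tau_r}(x))$. You merely spell out the conjugation bookkeeping and the ``in particular'' step, which the paper leaves implicit.
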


\begin{proof}
This follows immediately from $r\sh_{\tau_r}(x)r^{-1}=\tau_r(\sh_{\tau_r}(x))=x\sh_{\tau_r}(x)x^{-1}$, where the first equality is by the definition of $\tau_r$ and the second by Lemma \ref{shiftLem}(1).
\end{proof}

\begin{lemmma}\label{lcmLem}
(1) Let $G$ be a finite centerless group, $r,s\in G$. Set $x:=sr^{-1}$. Then $\sh_{\tau_r}(x)=s^{\ord(r)}$. In particular, $\ord(\A_{x,\tau_r})=\lcm(\ord(s),\ord(r))$.

\noindent (2) Let $G$ be any finite group, $r,s\in G$, $x$ as in point (1). Then $\sh_{\tau_r}(x)=s^{\ord(\tau_r)}\cdot r^{-\ord(\tau_r)}$. In particular, if $\gcd(\ord(r),\ord(s))=1$, then $\ord(\A_{x,\tau_r})=\ord(s)\cdot\ord(r)$.
\end{lemmma}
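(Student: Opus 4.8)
The plan is to compute $\sh_{\tau_r}(x)$ directly from the definition and then invoke the order formula $\ord(\A_{x,\alpha})=\ord(\alpha)\cdot\ord(\sh_{\alpha}(x))$ established in the discussion after Lemma \ref{shiftLem}. For part (2), set $n:=\ord(\tau_r)$; then by Definition \ref{shiftDef}, $\sh_{\tau_r}(x)=x\,\tau_r(x)\cdots\tau_r^{n-1}(x)=\prod_{i=0}^{n-1}r^i x r^{-i}$. Substituting $x=sr^{-1}$, the $i$-th factor is $r^i s r^{-1} r^{-i}=r^i s r^{-(i+1)}$, and the product telescopes: $\prod_{i=0}^{n-1}r^i s r^{-(i+1)} = (r^0 s r^{-1})(r^1 s r^{-2})\cdots(r^{n-1}s r^{-n})$. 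Writing each $s$-factor as conjugation is not what makes it telescope; instead I would observe that $r^i s r^{-(i+1)} = (r^i s r^{-i}) r^{-1}$, but the cleaner route is to note $\prod_{i=0}^{n-1} r^i x r^{-i}$ with $x=sr^{-1}$ and regroup as $r^0 s (r^{-1}r^1) s (r^{-1} r^2)\cdots$ — more carefully, $x\tau_r(x)\cdots\tau_r^{n-1}(x)$ with $x = s r^{-1}$ gives, after inserting $r^{-i}r^i=1$ between consecutive blocks, the telescoped form $s^{?}$ only in the centerless case; in general one gets $\sh_{\tau_r}(x)=\left(\prod_{i=0}^{n-1}r^i s r^{-i}\right)r^{-n}$ is \emph{not} right either, so the honest computation is: $\sh_{\tau_r}(x)=\prod_{i=0}^{n-1} r^i (sr^{-1}) r^{-i}$, and since $r^i (sr^{-1}) r^{-i} \cdot r^{i+1}(sr^{-1})r^{-(i+1)} $ does not simplify termwise, I would instead use the shift identity $\sh_{\tau_r}(x) = \sh_{\tau_r}^{(n)}(x)$ together with the observation that $\A_{x,\tau_r}=\A_{sr^{-1},\tau_r}$ sends $g\mapsto sr^{-1}\cdot rgr^{-1}=sgr^{-1}$, so $\A_{x,\tau_r}^{k}(g)=s^k g r^{-k}$ by an immediate induction; evaluating at $g=1$ and $k=n$ gives $\sh_{\tau_r}(x)=\A_{x,\tau_r}^n(1)\cdot$(correction), and more precisely $\A_{x,\tau_r}^n(g)=s^n g r^{-n}$, which identifies $\A_{x,\tau_r}^{\ord(\tau_r)}$ as left multiplication by $s^n r^{-n}$ (using $r^{-n}$ central? no — rather, left multiplication by $s^n$ composed with right multiplication by $r^{-n}$, but these agree with left multiplication by $s^n r^{-n}$ only if $r^{-n}$ is central). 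Since $r^n$ lies in $Z(G)$ precisely when $n=\ord(\tau_r)$ — indeed $\tau_{r^n}=\tau_r^n=\id$ means $r^n\in Z(G)$ — the map $g\mapsto s^n g r^{-n}$ is exactly left multiplication by $s^n r^{-n}$, so $\sh_{\tau_r}(x)=s^n r^{-n}=s^{\ord(\tau_r)}r^{-\ord(\tau_r)}$, proving the first claim of (2).

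For the \enquote{in particular} of part (2): when $\gcd(\ord(r),\ord(s))=1$, the subgroups $\langle s^n\rangle$ and $\langle r^{-n}\rangle$ of the abelian group $\langle r^n\rangle\langle s^n\rangle$... — actually $s^n r^{-n}$ need not generate a large cyclic group unless $s^n$ and $r^n$ commute, which they do since $r^n\in Z(G)$. Thus $\ord(s^n r^{-n})=\ord(s^n r^{-n})$ in the abelian group $\langle s, r^n\rangle/(\text{relations})$; concretely, since $r^n$ is central, $\ord(s^n r^{-n})$ is a multiple of $\ord(s^n)=\ord(s)/\gcd(n,\ord(s))$ divided by interference with $r^n$. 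I will have to argue that with $\gcd(\ord(r),\ord(s))=1$ one gets $\ord(\sh_{\tau_r}(x))=\ord(s)$: since $n=\ord(\tau_r)\mid\ord(r)$ is coprime to $\ord(s)$, $\ord(s^n)=\ord(s)$; and raising $s^n r^{-n}$ to the power $\ord(s)$ gives $r^{-n\ord(s)}$, whose order divides $\ord(r)/\gcd(\ldots)$ and is coprime to $\ord(s)$, while raising to $\ord(r)$ kills the $r$-part — a short $\gcd$ argument then pins $\ord(\sh_{\tau_r}(x))$ to $\ord(s)$ exactly. Combined with the order formula this yields $\ord(\A_{x,\tau_r})=\ord(\tau_r)\cdot\ord(s)$, and since $\ord(\tau_r)=\ord(r)$ would require centerlessness — but here we only need $\ord(\tau_r)\mid\ord(r)$ and coprimality to get $\ord(\tau_r)\cdot\ord(s)$ dividing $\ord(r)\ord(s)$; I will need to check the statement asserts $\ord(s)\cdot\ord(r)$, so part (2)'s conclusion tacitly requires handling $\ord(\tau_r)$ versus $\ord(r)$ — re-reading, the cleanest fix is that $\ord(\A_{x,\tau_r})=\lcm(\ord(s)\ord(\tau_r)\text{-stuff})$; I would state it as $\ord(\A_{x,\tau_r})=\ord(s)\cdot\ord(r)$ follows once one also notes $\ord(r)\mid\ord(\A_{x,\tau_r})$ because the $r^{-n}$-part reintroduces the missing factor.

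Part (1) is then the special case $Z(G)=1$: here $\ord(\tau_r)=\ord(r)$ exactly (the map $r\mapsto\tau_r$ being injective on a centerless group), so $n=\ord(r)$ and $r^{-n}=r^{-\ord(r)}=1$, whence $\sh_{\tau_r}(x)=s^{\ord(r)}$ immediately from part (2). The order formula gives $\ord(\A_{x,\tau_r})=\ord(\tau_r)\cdot\ord(\sh_{\tau_r}(x))=\ord(r)\cdot\ord(s^{\ord(r)})=\ord(r)\cdot\frac{\ord(s)}{\gcd(\ord(r),\ord(s))}=\lcm(\ord(r),\ord(s))$, as claimed.

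The main obstacle I anticipate is purely bookkeeping: getting the telescoping product $\prod_{i=0}^{n-1}\tau_r^i(sr^{-1})$ into the closed form $s^n r^{-n}$ cleanly, since the naive term-by-term cancellation does not work and one must instead recognize $\A_{x,\tau_r}^n$ as the map $g\mapsto s^n g r^{-n}$ via induction and then use the centrality of $r^n=r^{\ord(\tau_r)}$ to rewrite this as a single left translation. Once that identification is in hand, everything else is elementary order arithmetic in (virtually) abelian subgroups.
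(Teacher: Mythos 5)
Your overall route is the same as the paper's: establish the closed form $\sh_{\tau_r}^{(n)}(x)=s^nr^{-n}$, feed it into the order formula $\ord(\A_{x,\alpha})=\ord(\alpha)\cdot\ord(\sh_\alpha(x))$, and use the centrality of $r^{\ord(\tau_r)}$. Two remarks. First, the telescoping you abandoned actually works term by term: the $i$-th factor of $\prod_{i=0}^{n-1}\tau_r^i(sr^{-1})$ is $r^isr^{-(i+1)}$, and its trailing $r^{-(i+1)}$ cancels against the leading $r^{i+1}$ of the next factor, so the product collapses to $s^nr^{-n}$ with no hypothesis at all. The paper dispatches this as ``an easy induction.'' Your detour through $\A_{x,\tau_r}^k(g)=s^kgr^{-k}$ and the identification of $\A_{x,\tau_r}^{\ord(\tau_r)}$ with left translation by $s^nr^{-n}$ is valid, just unnecessary.

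Second, and this is a genuine error: in the coprime case of part (2) you assert $\ord(\sh_{\tau_r}(x))=\ord(s)$, which is false whenever $\ord(\tau_r)<\ord(r)$ (your own check exposes this: $(s^nr^{-n})^{\ord(s)}=r^{-n\ord(s)}$ is nontrivial unless $\ord(r)\mid n\ord(s)$, which by coprimality forces $n=\ord(r)$). The subsequent patch --- ``$\ord(r)\mid\ord(\A_{x,\tau_r})$ because the $r^{-n}$-part reintroduces the missing factor'' --- is not an argument. The correct and clean statement is: with $n=\ord(\tau_r)$, the elements $s^n$ and $r^{-n}$ commute (since $r^n$ lies in the center of $G$) and have the coprime orders $\ord(s)$ and $\ord(r)/n$, so $\ord(s^nr^{-n})=\ord(s)\cdot\ord(r)/n$, whence $\ord(\A_{x,\tau_r})=n\cdot\ord(s)\cdot\ord(r)/n=\ord(s)\cdot\ord(r)$ with no further divisibility bookkeeping. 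This is precisely the paper's one-line justification (``the order of a product of two commuting elements with coprime orders is the product of their orders''). Part (1) is handled correctly in your proposal.
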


\begin{proof}
An easy induction on $n\in\mathbb{N}^+$ proves that in both cases, we have $\sh_{\tau_r}^{(n)}(x)=s^nr^{-n}$. Therefore, we have $\sh_{\tau_r}(x)=s^{\ord(r)}$ under the assumptions of point (1). This implies that \[\ord(\A_{x,\tau_r})=\ord(\tau_r)\cdot\ord(\sh_{\tau_r}(x))=\ord(r)\cdot\frac{\ord(s)}{\gcd(\ord(s),\ord(r))}=\lcm(\ord(s),\ord(r)),\] proving the statement of point (1). The proof of point (2) is similar, using that $r^{-\ord(\tau_r)}\in\zeta G$ and that the order of a product of two commuting elements with coprime orders is the product of their orders.
\end{proof}

\subsection{Some results on finite semisimple groups}\label{subsec2P2}

In this Subsection, for the readers' convenience, we first briefly recall some basic facts on finite semisimple groups (finite groups without nontrivial solvable normal subgroups) which we will need later, following mostly the exposition in \cite[pp.~89ff.]{Rob96a}. Afterward, we generalize a result of Horo\v{s}evski\u{\i} on largest cycle lengths of automorphisms of finite semisimple groups to periodic affine maps of such groups.

Any group $G$ has a unique largest normal centerless CR-subgroup, the centerless CR-radical of $G$, which we denote by $\CRRad(G)$. From now on, assume that $G$ is finite and semisimple. Then $\CRRad(G)$ coincides with $\Soc(G)$, the socle of $G$. $G$ canonically embeds into $\Aut(\Soc(G))$ by its conjugation action (which shows that for any finite centerless CR-group $R$, there are only finitely many isomorphism types of finite semisimple groups $G$ such that $\Soc(G)\cong R$), and the image $G^{\ast}$ of this embedding clearly contains $\Inn(\Soc(G))$. Conversely, for every finite centerless CR-group $R$, any group $G$ such that $\Inn(R)\leq G\leq\Aut(R)$ is semisimple.

If $S_1,\ldots,S_r$ are pairwise nonisomorphic nonabelian finite simple groups, and $n_1,\ldots,n_r\in\mathbb{N}^+$, then the tuple $(S_1^{n_1},\ldots,S_r^{n_r})$ has the splitting property. In particular, $\Aut(S_1^{n_1}\times\cdots\times S_r^{n_r})=\Aut(S_1^{n_1})\times\cdots\times\Aut(S_r^{n_r})$. The structure of the automorphism groups of finite nonabelian characteristically simple groups (powers of finite nonabelian simple groups) can be described by permutational wreath products. More precisely, $\Aut(S^n)=\Aut(S)\wr\Sym_n$ for any finite nonabelian simple group $S$ and any $n\in\mathbb{N}^+$.

Rose \cite[Lemma 1.1]{Ros75a} observed that, in generalization of the embedding of $G$ into $\Aut(\Soc(G))$ for finite semisimple groups $G$, if $G$ is any group, and $H$ a characteristic subgroup of $G$ such that $\C_G(H)=\{1_G\}$, then $G$ embeds into $\Aut(H)$ by its conjugation action on $H$, and, viewing $G$ as a subgroup of $\Aut(H)$, $\Aut(G)$ is canonically isomorphic to $\N_{\Aut(H)}(G)$. This implies, among other things, that automorphism groups of finite centerless CR-groups are complete.

Let us now turn to the aforementioned theorem of Horo\v{s}evski\u{\i}. Following the terminology from \cite{GPS15a}, we define:

\begin{deffinition}\label{regularDef}
Let $\psi$ be a permutation of a finite set. A cycle of $\psi$ whose length equals $\ord(\psi)$ is called a \textbf{regular cycle of $\psi$}.
\end{deffinition}

Thus a permutation $\psi$ of a finite set has a regular cycle if and only if $\Lambda(\psi)=\ord(\psi)$. In the case of periodic affine maps $A$ of finite groups $G$, the order is often easier to compute than the $\Lambda$-value, since for computing the order, one can work with the compact representation $A=\A_{x,\alpha}$ for appropriate $x\in G$ and $\alpha\in\Aut(G)$, and composition of periodic affine maps translates, on the level of the compact representations, into some simple manipulations (by the isomorphism $\Aff(G)\rightarrow\Hol(G)$ mentioned above), without the need to \enquote{spread out} the entire element structure of $G$ to determine the cycle lengths of the elements of $G$ under $A$.

In view of this, it would be nice to know at least for some classes of finite groups $G$ that all periodic affine maps of $G$ have a regular cycle to make computation of $\Lambda$- and $\Lambda_{\aff}$-values easier. Indeed, Horo\v{s}evski\u{\i} proved:

\begin{theoremm}\label{horTheo}(\cite[Theorem 1]{Hor74a})
Let $G$ be a finite semisimple group. Then every automorphism of $G$ has a regular cycle.\qed
\end{theoremm}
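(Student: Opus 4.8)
The plan is to use the structure theory of finite semisimple groups recalled above, together with Lemma~\ref{fdsLcmLem}, to reduce in a few formal steps to the case of a single finite nonabelian simple group; there the statement becomes an assertion about automorphisms of simple groups that genuinely needs the CFSG -- this is essentially the content of \cite[Theorem~1]{Hor74a} (cf.\ also \cite{GPS15a}), and where the real work lies.

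\emph{Step 1 (reduce to $G=\Soc(G)$).} I would first pass to $N:=\Soc(G)$, which is characteristic in $G$ with $\C_G(N)=\{1_G\}$. By Rose's lemma (recalled above; see \cite{Ros75a}), the canonical isomorphism $\Aut(G)\cong\N_{\Aut(N)}(G)$ is given by restriction to $N$; in particular the restriction map $\Aut(G)\to\Aut(N),\ \psi\mapsto\psi|_N$, is injective, so $\ord(\alpha|_N)=\ord(\alpha)$. Since every cycle length of $\alpha$ on $G$ divides $\ord(\alpha)$, any cycle of $\alpha|_N$ of length $\ord(\alpha)$ inside $N$ is already a regular cycle of $\alpha$ on $G$. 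Hence it suffices to treat the case $G=\Soc(G)$, a finite centerless CR-group, say $G=S_1^{n_1}\times\cdots\times S_r^{n_r}$ with the $S_i$ pairwise nonisomorphic nonabelian finite simple groups.

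\emph{Steps 2--3 (reduce to one simple group).} By the splitting property of $(S_1^{n_1},\ldots,S_r^{n_r})$ we have $\Aut(G)=\prod_{i}\Aut(S_i^{n_i})$, so $\alpha=\alpha_1\times\cdots\times\alpha_r$; since cycle lengths in a product are the $\lcm$s of the component cycle lengths (Lemma~\ref{fdsLcmLem}) and $\ord(\alpha)=\lcm_i\ord(\alpha_i)$, choosing each $g_i$ in a regular cycle of $\alpha_i$ produces a regular cycle of $\alpha$; so I may assume $G=S^n$ with $S$ nonabelian simple. Next, using $\Aut(S^n)=\Aut(S)\wr\Sym_n$ and writing $\alpha$ as $(\phi_1,\ldots,\phi_n;\pi)$, the map $\alpha$ stabilizes the direct decomposition of $S^n$ into the blocks indexed by the cycles of $\pi$ and is (as an FDS) the product of its restrictions to these blocks; so the same argument reduces to the case in which $\pi$ is a single $\ell$-cycle. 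There $\alpha^\ell$ is coordinatewise, $\alpha^\ell=(\Phi_1,\ldots,\Phi_\ell)$ with each $\Phi_i\in\Aut(S)$ the cyclically-read product of the $\phi_j$ around the cycle; the $\Phi_i$ are conjugate in $\Aut(S)$, of common order $m$ say, and one computes $\ord(\alpha)=\ell m$. Taking $g_1\in S\setminus\{1\}$ in a regular cycle of $\Phi_1$ and setting $g:=(g_1,1,\ldots,1)$, one sees that $\alpha^k(g)=g$ forces $\ell\mid k$ (the sole non-identity coordinate of $\alpha^k(g)$ lies in the position $\equiv 1+k\pmod\ell$) and then $m\mid k/\ell$ (on the first coordinate $\alpha^\ell$ acts as $\Phi_1$), so $\cl_\alpha(g)=\ell m=\ord(\alpha)$. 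Everything is thereby reduced to the \emph{core claim: every automorphism $\phi$ of a finite nonabelian simple group $S$ has a regular cycle on $S$.}

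\emph{Step 4 (the core claim), and the main obstacle.} This is where I expect the difficulty to lie. With $m:=\ord(\phi)$, the $\langle\phi\rangle$-stabilizer of $g\in S$ is cyclic, hence trivial exactly when it contains no subgroup of prime order, i.e.\ when $\phi^{m/p}(g)\neq g$ for every prime $p\mid m$; so $\phi$ has a regular cycle if and only if $S$ is not covered by the proper subgroups $\fix_S(\phi^{m/p})$, $p\mid m$ (proper since $\phi^{m/p}\neq\id$). If $\phi$ is inner, $\phi=\tau_s$ with $\ord(s)=m$, these are the centralizers $\C_S(s^{m/p})$; in general they are governed by the structure of $\Aut(S)$ and $\Out(S)$. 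There is no uniform elementary reason why a finite simple group should avoid being covered by such a family once $m$ has several distinct prime divisors, so at this point I would invoke the CFSG: for each finite simple $S$ the classification yields enough control on element orders, on $\Out(S)$ (in particular its solvability, which bounds the number of primes entering $m$ through the outer part), and on the sizes $|\fix_S(\psi)|$ of fixed-point subgroups of nontrivial $\psi\in\Aut(S)$ relative to $|S|$, to conclude that these boundedly many subgroups cannot exhaust $S$. The case analysis over the families of finite simple groups is exactly what Horoševskiĭ's proof carries out; Steps 1--3 above, by contrast, are formal consequences of the tools already assembled.
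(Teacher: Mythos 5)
Your Steps 1--3 are sound: the restriction argument via Rose's lemma, the splitting property combined with Lemma \ref{fdsLcmLem}, and the wreath-product/block analysis correctly reduce the statement to the core claim for a single finite nonabelian simple group $S$. (The paper gives no proof of Theorem \ref{horTheo} at all --- it only cites \cite[Theorem 1]{Hor74a} --- and its proof of the affine generalization, Theorem \ref{regularTheo}, is organized differently, as a double induction on $|G|$ and the order of the map with a four-way case distinction; your direct reduction is a legitimate alternative for the automorphism case.)

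The genuine gap is Step 4: you reduce everything to the core claim and then do not prove it, and the route you propose --- a CFSG case analysis over the families of simple groups, controlling element orders, $\Out(S)$ and the sizes of fixed-point subgroups --- both leaves the essential step open and mischaracterizes how the result is actually established. Horo\v{s}evski\u{\i}'s 1974 argument (reproduced in the paper as Case 1 of the proof of Theorem \ref{regularTheo}) is elementary and uniform, and the paper explicitly notes that none of the tools of Section \ref{sec2} use the CFSG. Concretely: by \cite[Lemma 1]{Hor74a} (Lemma \ref{regularLem}) and induction on $\ord(\phi)$ one may assume $m=\ord(\phi)=p_1\cdots p_r$ is squarefree; the induction applied to $\phi^{p_1}$ produces a cycle of $\phi$ of length $p_2\cdots p_r$, whence $p_2\cdots p_r<|S|$. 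If $\phi$ had no regular cycle, your covering $S=\bigcup_{i=1}^{r}\fix(\phi^{m/p_i})$ by $r$ proper subgroups would force one of them to have index at most $r$ in $S$; the coset action and simplicity of $S$ then give $|S|\leq r!\leq p_2\cdots p_r<|S|$, a contradiction (the middle inequality holds because $p_2<\cdots<p_r$ are distinct odd primes). So the ``real work'' you defer to the classification is in fact a short counting argument exploiting simplicity; as written, your proposal proves the reductions but not the theorem.
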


We will extend this to:

\begin{theoremm}\label{regularTheo}
Let $G$ be a finite semisimple group. Then every periodic affine map of $G$ has a regular cycle.
\end{theoremm}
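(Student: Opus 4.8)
The plan is to reduce the statement about an arbitrary periodic affine map $\A_{x,\alpha}$ of a finite semisimple group $G$ to Horoševskiĭ's Theorem~\ref{horTheo} about automorphisms, by exploiting the order formula $\ord(\A_{x,\alpha})=\ord(\alpha)\cdot\ord(\sh_{\alpha}(x))$ together with the structure theory of finite semisimple groups recalled in Subsection~\ref{subsec2P2}. Concretely, I want to find an element $g\in G$ whose cycle under $\A_{x,\alpha}$ has length exactly $\ord(\A_{x,\alpha})$. By the "induces/stems from" discussion preceding Lemma~\ref{divisorLem}, it suffices to produce a point $g$ such that, writing $m:=\ord(\alpha)$, the orbit of $g$ under $\A_{x,\alpha}$ has length divisible by $\ord(\sh_{\alpha}(x))\cdot m$; since all cycle lengths divide $\ord(\A_{x,\alpha})$, divisibility forces equality.

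First I would set up the two-layer picture: $\A_{x,\alpha}^{m}$ is left multiplication by $\sh_{\alpha}(x)$, so every orbit of $\A_{x,\alpha}$ has length a multiple of $\ord(\sh_{\alpha}(x))$, and the quotient by that multiple is a divisor of $m$ coming from how the induced $\A_{x,\alpha}^m$-orbit sits inside the $\A_{x,\alpha}$-orbit. So the task splits into two independent demands on the chosen $g$: (a) ensure the "$\alpha$-part" contributes the full factor $m$, and (b) ensure there is no cancellation between the multiplicative order of $\sh_{\alpha}(x)$ and the $\alpha$-direction. For (a), the natural move is to apply Theorem~\ref{horTheo} to the automorphism $\alpha$ itself to get an element $g_0$ with $\cl_{\alpha}(g_0)=\ord(\alpha)=m$; one then wants to "correct" $g_0$ by something in the kernel direction (i.e. replace $g_0$ by a suitable conjugate or translate) so that the affine map also has an orbit with full $\alpha$-period through it. This is where the centralizer lemmas (Lemma~\ref{centralizerLem}) and the shift calculus (Lemma~\ref{shiftLem}) should do the bookkeeping, because $\A_{x,\alpha}^k(g)=\sh_{\alpha}^{(k)}(x)\cdot\alpha^k(g)$, so $g$ returns to itself after $k$ steps iff $\alpha^k(g)=\sh_{\alpha}^{(k)}(x)^{-1}g$, i.e. iff $g^{-1}\sh_{\alpha}^{(k)}(x)^{-1}g$ lies in the fixed subgroup structure in a controlled way.

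For (b), I would use the semisimple structure: by passing, if necessary, to the canonical embedding $G\hookrightarrow\Aut(\Soc(G))$ and decomposing $\Soc(G)=S_1^{n_1}\times\cdots\times S_r^{n_r}$, one reduces to studying how $\alpha$ permutes and twists the simple factors; the splitting property (Lemma~\ref{productLem}) lets one treat the distinct isomorphism types of factors separately, so the crux is a single wreath product $\Aut(S^n)=\Aut(S)\wr\Sym_n$. There, $\alpha$ cycles blocks of simple factors, and $\sh_{\alpha}(x)$ is essentially a product of "loop" elements along those cycles; one shows that one can choose the coordinates of $g$ inside a block so that the contribution of $\sh_{\alpha}(x)$ in that block has order forcing no collapse with the $m$-fold period — in spirit, arranging a coordinate on which $\sh_{\alpha}(x)$ acts by a nontrivial power of a product of the block entries, and invoking that in a nonabelian simple group (or a direct power) one can realize elements of any order dividing the relevant quantity, or more simply that regular cycles already exist at the automorphism level and translations by socle elements cannot destroy the periodicity if chosen transverse to the relevant centralizers.

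The main obstacle I expect is precisely the interaction in step (b): even when $\alpha$ has a regular cycle on $G$, the left translation by $\sh_{\alpha}(x)$ could, a priori, shorten every orbit by merging the multiplicative order of $\sh_{\alpha}(x)$ with a proper factor of $m$, so one must genuinely use the noncommutative structure of the socle (and the freedom to choose $g$ among a full conjugacy class / coset) to certify that some orbit attains length $\ord(\alpha)\cdot\ord(\sh_{\alpha}(x))$. Handling the case where $\alpha$ induces a nontrivial permutation of simple factors — so that $\sh_{\alpha}(x)$ lands in a "diagonal" subgroup of a block and its order is controlled by an element of a single simple factor — is the delicate point, and I would isolate it as the technical heart of the argument, with Lemmata~\ref{divisorLem}, \ref{centralizerLem}, and the wreath-product description of $\Aut(S^n)$ as the main levers.
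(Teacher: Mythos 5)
Your opening reduction is sound: since $A^{\ord(\alpha)}=\A_{x,\alpha}^{\ord(\alpha)}$ is left multiplication by $\sh_{\alpha}(x)$, every $A$-orbit has length $\gcd(\ord(\alpha),\cl_A(g))\cdot\ord(\sh_{\alpha}(x))$, so it suffices to exhibit $g$ with $\ord(\alpha)\mid\cl_A(g)$. From that point on, however, the proposal is a programme rather than a proof, and the places where you defer are exactly where the substance lies. The case $G$ simple is already nontrivial and is not addressed: applying Theorem~\ref{horTheo} to $\alpha$ gives $g_0$ with $\cl_{\alpha}(g_0)=\ord(\alpha)$, but no mechanism is given for ``correcting'' $g_0$ so that the \emph{affine} orbit through the corrected point has full $\alpha$-period. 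The paper instead argues by contradiction with a double induction (on $|G|$ and on $\ord(A)$): a counterexample would, by Lemma~\ref{regularLem}, have squarefree order $p_1\cdots p_r$, would force $G=\bigcup_{i}\fix(A^{\prod_{j\neq i}p_j})$ with each fixed-point set of size $|\fix(\alpha_i)|$ by Lemma~\ref{fixLem}, and simplicity then yields the contradiction $|G|\leq r!\leq p_2\cdots p_r<|G|$. Likewise, the characteristically simple case $G\cong S^n$, $n\geq 2$ --- which you correctly single out as the technical heart --- is not settled by hand in the paper either: after observing $\Aff(S^n)=\Aff(S)\wr\Sym_n$, it invokes the external result \cite[Theorem 3.2]{GPS15a} together with the induction hypothesis that every element of $\Aff(S)$ has a regular cycle. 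Your ``choose coordinates in a block so nothing collapses'' is precisely the content of that theorem, and without it (or an actual proof of it) the argument does not close.

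Finally, your plan for general semisimple $G$ --- passing to the embedding $G\hookrightarrow\Aut(\Soc(G))$ --- does not function as a reduction: a periodic affine map of $G$ permutes the set $G$, not $\Aut(\Soc(G))$, and it does not induce a periodic affine map of the larger group whose regular cycles would descend to $A$. The paper handles $\Soc(G)<G$ differently, by quotienting rather than enlarging: with $B:=\Soc(G)$, let $k$ be the cycle length of the identity of $G/B$ under the induced map $\tilde A$, apply the induction hypothesis to $(A^k)_{\mid B}$ to get a cycle of length $n=\ord((A^k)_{\mid B})$, and use $\C_G(B)=\{1_G\}$ via Lemma~\ref{centQuotLem} to conclude $\ord(A)=k\cdot n$ is attained. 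None of these three steps is recoverable from the proposal as written, so the proof has genuine gaps at each stage of the case analysis.
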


Our proof of Theorem \ref{regularTheo} is mostly an adaptation of Horo\v{s}evski\u{\i}'s proof of Theorem \ref{horTheo}, with the arguments getting slightly more complicated because of the more general situation. However, at one point, our proof significantly differs from the one of Horo\v{s}evski\u{\i}, using the recent result \cite[Theorem 3.2]{GPS15a} to settle one important case. Just like Horo\v{s}evski\u{\i}, we use the following:

\begin{lemmma}\label{regularLem}
Let $X$ be a finite set, $\psi\in\Sym_X$, $p$ a prime such that $p^2\mid\ord(\psi)$. The following are equivalent:

\noindent (1) $\psi$ has a regular cycle.

\noindent (2) $\psi^p$ has a regular cycle.
\end{lemmma}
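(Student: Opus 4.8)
The plan is to analyze the cycle structure of $\psi$ directly and to track what happens to cycle lengths when we pass to the $p$-th power. Recall that $\ord(\psi) = \lcm_{O}|O|$, where $O$ ranges over the orbits of $\psi$ on $X$, and that an orbit $O$ of $\psi$ of length $\ell$ splits into $\gcd(p,\ell)$ orbits of $\psi^p$, each of length $\ell/\gcd(p,\ell)$. Write $m := \ord(\psi)$ and let $k := \nu_p(m) \geq 2$ by hypothesis. I would split the prime factorization of each cycle length into its $p$-part and its $p'$-part, since the prime $p$ behaves differently from the others under taking $p$-th powers.

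For the direction (1) $\Rightarrow$ (2): suppose $\psi$ has a regular cycle, i.e.\ an orbit $O$ with $|O| = m$. Then $p \mid |O|$, so $O$ induces $p$ orbits of $\psi^p$, each of length $m/p$. Now I claim $m/p = \ord(\psi^p)$. Indeed $\ord(\psi^p) = m/\gcd(p,m) = m/p$ since $p \mid m$; hence each of these induced orbits is a regular cycle of $\psi^p$. (Here I only used $p \mid m$, not $p^2 \mid m$, so this implication is the easy one.)

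For the harder direction (2) $\Rightarrow$ (1): suppose $\psi^p$ has a regular cycle, i.e.\ an orbit $\tilde O$ of $\psi^p$ with $|\tilde O| = \ord(\psi^p) = m/p$. Let $O$ be the unique orbit of $\psi$ from which $\tilde O$ stems; then $|\tilde O| = |O|/\gcd(p,|O|)$, so $|O| = \gcd(p,|O|)\cdot (m/p)$, which is either $m/p$ or $m$. If $|O| = m$ we are done. So assume for contradiction that $|O| = m/p$ and that $p \nmid |O|$. Since $|O| = m/p$, we have $\nu_p(|O|) = \nu_p(m) - 1 = k-1 \geq 1$, contradicting $p \nmid |O|$; this is exactly the place where the hypothesis $p^2 \mid m$ (i.e.\ $k \geq 2$) is used, as $k - 1 \geq 1$. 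Hence the case $|O| = m/p$ is impossible, forcing $|O| = m$, so $O$ is a regular cycle of $\psi$.

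The main obstacle is making the bookkeeping in the second implication airtight: one must be careful that $\gcd(p,|O|) \in \{1,p\}$ and correctly deduce $\nu_p(|O|)$ in each case, using $p^2 \mid m$ to rule out the degenerate possibility. Everything else is a routine application of the orbit-splitting formula $|\tilde O| = |O|/\gcd(n,|O|)$ recalled just before the lemma, together with the description of $\ord(\psi)$ and $\ord(\psi^p)$ as least common multiples of orbit lengths.
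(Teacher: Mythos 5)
Your proof is correct. Both directions are handled properly: the easy direction uses only $p\mid\ord(\psi)$ via the splitting of a length-$m$ orbit into $p$ orbits of length $m/p=\ord(\psi^p)$, and in the converse direction you correctly reduce to the dichotomy $|O|\in\{m/p,\,m\}$ coming from $|O|=\gcd(p,|O|)\cdot(m/p)$ and then use $\nu_p(m/p)=\nu_p(m)-1\geq 1$ to kill the degenerate case --- which is precisely where the hypothesis $p^2\mid\ord(\psi)$ enters. The one point worth comparing: the paper gives no argument at all here, deferring entirely to Horo\v{s}evski\u{\i} \cite[Lemma 1]{Hor74a} with the remark that the group-automorphism hypothesis there is superfluous; your self-contained orbit-splitting argument is exactly the elementary combinatorial content that the citation encapsulates, and it has the small advantage of making visibly explicit that the implication $(1)\Rightarrow(2)$ needs only $p\mid\ord(\psi)$, while $(2)\Rightarrow(1)$ genuinely requires $p^2\mid\ord(\psi)$.
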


\begin{proof}
See \cite[Lemma 1]{Hor74a}. The assumption there that $\psi$ (called $\phi$ there) is an automorphism of a finite group is not needed.
\end{proof}

Before we continue with the next lemma, a quick reminder and an easy observation: Recall that for a group $G$, an automorphism $\alpha$ of $G$, and a normal subgroup $N\unlhd G$, $\alpha$-admissibility of $N$ (i.e., the property that $\alpha(N)=N$) is equivalent to the existence of an automorphism $\tilde{\alpha}$ of $G/N$ such that, denoting by $\pi:G\rightarrow G/N$ the canonical projection, $\pi\circ\alpha=\tilde{\alpha}\circ\pi$. In this case, $\tilde{\alpha}$ is unique and is called the \textit{automorphism of $G/N$ induced by $\alpha$}. More generally, if, for some permutation $\psi$ of $G$, there exists a permutation $\sigma$ of $G/N$ such that $\pi\circ\psi=\sigma\circ\pi$, we still call $\sigma$ \textit{induced by $\psi$}. It is not difficult to see that for any group $G$, any $N\unlhd G$ and any periodic affine map $A=\A_{x,\alpha}$ of $G$, $A$ induces a permutation $\tilde{A}$ of $G/N$ if and only if $N$ is $\alpha$-admissible, and in this case, $\tilde{A}$ is a periodic affine map of $G/N$; actually, $\tilde{A}=\A_{\pi(x),\tilde{\alpha}}$.

\begin{lemmma}\label{centQuotLem}
Let $G$ be a group, $B\unlhd G$, $A$ a periodic affine map of $G$ such that $A_{\mid B}=\id_B$. Then $\C_G(B)\unlhd G$, and $A$ induces the identity map in $G/\C_G(B)$.
\end{lemmma}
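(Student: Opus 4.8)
The plan is to reduce immediately to the automorphism case and then to carry out one short commutator computation.

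First, note that the normality of $\C_G(B)$ in $G$ has nothing to do with $A$: since $B\unlhd G$, conjugation by any $g\in G$ stabilizes $B$, whence $g\,\C_G(B)\,g^{-1}=\C_G(gBg^{-1})=\C_G(B)$. Next, write $A=\A_{x,\alpha}$ with $x\in G$ and $\alpha\in\Aut(G)$. Since $1_G\in B$ and $A_{\mid B}=\id_B$, evaluating at $1_G$ gives $x=x\alpha(1_G)=A(1_G)=1_G$; thus $A=\alpha$ is in fact an automorphism of $G$ with $\alpha_{\mid B}=\id_B$ (so under the hypothesis the periodic affine map is no more general than an automorphism, which I would point out explicitly).

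The heart of the argument is then the following computation: fix $g\in G$ and $b\in B$. As $B\unlhd G$ we have $g^{-1}bg\in B$, so $\alpha$ fixes it, while on the other hand $\alpha(g^{-1}bg)=\alpha(g)^{-1}\alpha(b)\alpha(g)=\alpha(g)^{-1}b\alpha(g)$. Equating the two expressions and conjugating by $g$ yields $(g\alpha(g)^{-1})\,b\,(g\alpha(g)^{-1})^{-1}=b$. Since $b\in B$ was arbitrary, this shows $g\alpha(g)^{-1}\in\C_G(B)$ for every $g\in G$.

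Finally I would harvest this relation. Letting $g$ range over $\C_G(B)$ gives $\alpha(\C_G(B))\subseteq\C_G(B)$, and the same reasoning applied to $\alpha^{-1}$ (which also restricts to $\id_B$) gives the reverse inclusion, so $\C_G(B)$ is $\alpha$-admissible and hence $A=\alpha$ induces a periodic affine map $\tilde A$ on $G/\C_G(B)$ by the observation recorded just before the lemma — this admissibility check is the one place needing a moment's care. For the identification of $\tilde A$: writing $\pi:G\rightarrow G/\C_G(B)$ for the projection and using that $\C_G(B)\unlhd G$, the relation $g\alpha(g)^{-1}\in\C_G(B)$ says precisely that $\pi(\alpha(g))=\pi(g)$ for all $g\in G$, i.e. $\tilde A\circ\pi=\pi\circ A=\pi$, so $\tilde A=\id_{G/\C_G(B)}$. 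I do not anticipate any genuine obstacle here; the whole proof is a couple of lines of elementary group theory once one notices that $x=1_G$.
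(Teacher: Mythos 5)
Your proof is correct and follows the same route as the paper: both arguments first observe that $A=\A_{x,\alpha}$ satisfies $A(1_G)=x$, so the hypothesis $A_{\mid B}=\id_B$ together with $1_G\in B$ forces $x=1_G$ and reduces everything to the case of an automorphism $\alpha$ fixing $B$ pointwise. The only difference is that where the paper then simply cites \cite[Lemma 2]{Hor74a}, you prove that statement inline via the (correct) computation showing $g\alpha(g)^{-1}\in\C_G(B)$ for all $g\in G$, which is precisely the content of the cited lemma.
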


\begin{proof}
In general, for all $x\in G$ and $\alpha\in\Aut(G)$, it follows immediately from the definition of $\A_{x,\alpha}$ that $\A_{x,\alpha}(1_G)=x$. Since $A(1_G)=1_G$ by assumption, $A$ thus actually is an automorphism of $G$, so the claim follows from \cite[Lemma 2]{Hor74a}.
\end{proof}

\begin{lemmma}\label{productRegLem}
Let $X_1,\ldots,X_n$ be finite sets, $\psi_i$, $i=1,\ldots,n$, a permutation of $X_i$ with a regular cycle. Then $\psi_1\times\cdots\times\psi_n$ has a regular cycle.\qed
\end{lemmma}

One additional easy observation which we will need is the following:

\begin{lemmma}\label{fixLem}
Let $G$ be a group, $A=\A_{x,\alpha}$ a periodic left affine map of $G$ such that $\fix(A)\not=\emptyset$. Then $\fix(A)$ is a left coset of the subgroup $\fix(\alpha)\leq G$.
\end{lemmma}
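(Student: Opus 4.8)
The plan is to analyze the structure of $\fix(A)$ directly from the defining equation $A(g)=g$, i.e. $g_0\alpha(g)=g$, and show that the solution set, when nonempty, is a coset of $\fix(\alpha)$. First I would fix an element $h\in\fix(A)$, which exists by hypothesis, so that $g_0\alpha(h)=h$, equivalently $g_0=h\alpha(h)^{-1}$. Then for an arbitrary $g\in G$, I would rewrite the condition $g\in\fix(A)$: we have $A(g)=g\iff g_0\alpha(g)=g\iff h\alpha(h)^{-1}\alpha(g)=g\iff h\alpha(h^{-1}g)=g\iff \alpha(h^{-1}g)=h^{-1}g$, where the last step uses left-multiplication by $h^{-1}$. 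Thus $g\in\fix(A)$ if and only if $h^{-1}g\in\fix(\alpha)$, which says precisely that $\fix(A)=h\cdot\fix(\alpha)$, a left coset of the subgroup $\fix(\alpha)\leq G$. (That $\fix(\alpha)$ is a subgroup is standard, since $\alpha$ is a homomorphism.)

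The key steps, in order, are: (i) invoke nonemptiness to pick $h\in\fix(A)$ and record $g_0=h\alpha(h)^{-1}$; (ii) for general $g$, substitute this expression for $g_0$ into $A(g)=g$ and simplify using that $\alpha$ is a homomorphism; (iii) conclude the chain of equivalences $A(g)=g\iff\alpha(h^{-1}g)=h^{-1}g$; (iv) read off $\fix(A)=h\,\fix(\alpha)$. Everything is a short computation with no case analysis.

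There is essentially no hard part here — the only thing to be slightly careful about is the order of multiplication, since $A$ is a \emph{left}-affine map ($A(g)=g_0\alpha(g)$) and so the coset that appears is a left coset; one must make sure to left-multiply (not right-multiply) by $h^{-1}$ when isolating $\alpha(h^{-1}g)$, using $\alpha(h^{-1}g)=\alpha(h^{-1})\alpha(g)=\alpha(h)^{-1}\alpha(g)$. No results beyond the definition of $\A_{x,\alpha}$ are needed.
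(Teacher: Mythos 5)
Your proof is correct and follows essentially the same route as the paper: fix an element $h\in\fix(A)$, use it to express $x$ as $h\alpha(h)^{-1}$, and reduce the fixed-point condition for general $g$ to $h^{-1}g\in\fix(\alpha)$, giving $\fix(A)=h\fix(\alpha)$. The computation and conclusion match the paper's argument.
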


\begin{proof}
For all $g\in G$, we have that $g\in\fix(A)$ if and only if $x\alpha(g)=g$, or $x=g\alpha(g)^{-1}$. Therefore, if we fix $f\in\fix(A)$, then $\fix(A)$ can be desribed as $\{g\in G\mid g\alpha(g)^{-1}=f\alpha(f)^{-1}\}=\{g\in G\mid g^{-1}f\in\fix(\alpha)\}=f\fix(\alpha)$.
\end{proof}

\begin{proof}[Proof of Theorem \ref{regularTheo}]
The proof is by induction on $|G|$, the induction base $|G|=1$ being trivial, with an inner induction on $\ord(A)$, the induction base $\ord(A)=1$ being trivial. For the induction step, assume that $A=\A_{x,\alpha}$ is a periodic affine map of the finite semisimple group $G$. To show that $A$ has a regular cycle, we make a case distinction:

\begin{enumerate}
\item Case: $G$ is simple. This case is by contradiction, so assume that $A$ does not have a regular cycle. Note that by Lemma \ref{regularLem} and the induction hypothesis, $\ord(A)$ then must be squarefree, say $\ord(A)=p_1\cdots p_r$, with the $p_i$ pairwise distinct primes. Since by the induction hypothesis, $A^{p_1}$ has a cycle of length $\ord(A^{p_1})=p_2\cdots p_r$, but $A$ has no regular cycle, $A$ must also have a cycle of length $p_2\cdots p_r$, which implies $p_2\cdots p_r<|G|$. Now note that by the assumption that $A$ does not have a regular cycle, we have $G\subseteq\bigcup_{i=1}^r{\fix(A^{\prod_{j\not=i}{p_j}})}$. By Lemma \ref{fixLem}, denoting by $\alpha_i$ the underlying automorphism of $A^{\prod_{j\not=i}{p_j}}$, we have $|\fix(A^{\prod_{j\not=i}{p_j}})|=|\fix(\alpha_i)|$, and so there must exist $i\in\{1,\ldots,r\}$ such that $[G:\fix(\alpha_i)]\leq r$ (otherwise, $G$ could not be covered by the $r$ fixed point sets above). But since $G$ is simple, this implies that $|G|\leq r!\leq p_2\cdots p_r<|G|$, a contradiction.

\item Case: $G$ is characteristically simple, but not simple. Let $S$ be a nonabelian finite simple group and $n\geq 2$ such that $G\cong S^n$. $\alpha$ is an element of the permutational wreath product $\Aut(S)\wr\Sym_n$, i.e., $\alpha$ is a composition $(\alpha_1\times\cdots\times\alpha_n)\circ\psi$, where each $\alpha_i$ is an automorphism of $S$ and $\psi$ is a permutation of coordinates on $S^n$. Writing $x=(x_1,\ldots,x_n)$, and denoting by $\mu_{x}$ the left multiplication by $x$ in $S^n$, it follows that $A=\mu_{x}\circ((\alpha_1\times\cdots\times\alpha_n)\circ\psi)=((\mu_{x_1}\times\cdots\times\mu_{x_n})\circ(\alpha_1\times\cdots\times\alpha_n))\circ\psi=(\A_{x_1,\alpha_1}\times\cdots\times\A_{x_n,\alpha_n})\circ\psi$. This proves that $A\in\Aff(S)\wr\Sym_n$ (actually, we just proved that $\Aff(S^n)=\Aff(S)\wr\Sym_n$). By induction hypothesis, every permutation from $\Aff(S)$ has a regular cycle, and so by \cite[Theorem 3.2]{GPS15a}, $A$ has a regular cycle.

\item Case: $G$ is completely reducible, but not characteristically simple. Let $S_1,\ldots,S_r$ be pairwise nonisomorphic nonabelian finite simple groups, $n_1,\ldots,n_r\in\mathbb{N}^+$ such that $G\cong S_1^{n_1}\times\cdots\times S_r^{n_r}$, and note that $r\geq 2$ by assumption. Since $(S_1^{n_1},\ldots,S_r^{n_r})$ has the splitting property, by Lemma \ref{productLem}(2), $A$ can be written as a product of periodic affine maps over the single $S_i^{n_i}$, each of which has a regular cycle by the induction hypothesis, and so $A$ has a regular cycle by Lemma \ref{productRegLem}.

\item Case: $G$ is not completely reducible. Set $B:=\Soc(G)$, and note that $B$ is proper in $G$ and $\C_G(B)=\{1_G\}$. Denote by $\tilde{A}$ the periodic affine map of $G/B$ induced by $A$, and let $k$ denote the length of the identity element of $G/B$ under $\tilde{A}$. Set $A_0:=A^k$. Then $A_0$ restricts to a periodic affine map of $B$, so by the induction hypothesis, ${A_0}_{\mid B}$ has a cycle of length $n:=\ord({A_0}_{\mid B})$; fix an element $x\in B$ such that $\cl_{A_0}(x)=n$. Now $A_0^n$ acts identically in $B$, and thus by Lemma \ref{centQuotLem} also in $G\cong G/\C_G(B)$. This means that $n=\ord(A_0)$, and so $\ord(A)\leq k\cdot n$. But clearly, $\cl_{A}(x)=k\cdot n$, since $k$ divides the cycle length under $A$ of any element from $B$. Therefore, $\ord(A)=k\cdot n$ and $A$ has a regular cycle.
\end{enumerate}
\end{proof}

\begin{corrollary}\label{regularCor}
(1) Let $G$ be a finite semisimple group. Then:

(i) $\Lambda(G)=\mao(G)$.

(ii) $\Lambda_{\aff}(G)=\meo(\Hol(G))$.

\noindent (2) Let $R$ be a finite centerless CR-group. Then:

(i) $\Lambda(\Aut(R))=\mao(R)$.

(ii) $\Lambda_{\aff}(\Aut(R))=\meo(\Hol(\Aut(R)))$.
\end{corrollary}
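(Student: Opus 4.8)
The plan is to derive all four equalities directly from Theorem~\ref{regularTheo} (and its special case Theorem~\ref{horTheo}), together with the structural facts on finite semisimple groups recalled in Subsection~\ref{subsec2P2}. The only elementary observation needed is that, for any permutation $\psi$ of a finite set, every cycle length of $\psi$ divides $\ord(\psi)$, so $\Lambda(\psi)\leq\ord(\psi)$, with equality exactly when $\psi$ has a regular cycle.

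First I would prove (1)(ii). Let $G$ be finite semisimple. By Theorem~\ref{regularTheo} every $A\in\Aff(G)$ has a regular cycle, hence $\Lambda(A)=\ord(A)$; therefore $\Lambda_{\aff}(G)=\max_{A\in\Aff(G)}\Lambda(A)=\max_{A\in\Aff(G)}\ord(A)$. Since $\A_{x,\alpha}\mapsto(x,\alpha)$ is an isomorphism $\Aff(G)\cong\Hol(G)$, this last maximum is $\meo(\Hol(G))$, which is the claim. Part (1)(i) is the same argument restricted to the periodic affine maps $\A_{1_G,\alpha}=\alpha$, using Theorem~\ref{horTheo} (or, again, Theorem~\ref{regularTheo}): $\Lambda(\alpha)=\ord(\alpha)$ for all $\alpha\in\Aut(G)$, so $\Lambda(G)=\meo(\Aut(G))=\mao(G)$.

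For part (2) I would let $R$ be a finite centerless CR-group and take $G:=\Aut(R)$. As recalled in Subsection~\ref{subsec2P2}, $G$ is then finite semisimple and, moreover, complete; in particular $\Aut(G)\cong G$, so $\mao(G)=\meo(\Aut(G))=\meo(G)=\mao(R)$. Applying part~(1) to $G=\Aut(R)$ then gives $\Lambda(\Aut(R))=\mao(\Aut(R))=\mao(R)$ (which is (2)(i)) and $\Lambda_{\aff}(\Aut(R))=\meo(\Hol(\Aut(R)))$ (which is (2)(ii)).

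I do not anticipate a genuine obstacle here: all the real work sits inside Theorem~\ref{regularTheo}, and the corollary is essentially bookkeeping. The points that require a little care are the equivalence \enquote{regular cycle $\iff\Lambda=\ord$}, the fact that $\A_{x,\alpha}\mapsto(x,\alpha)$ is a group isomorphism $\Aff(G)\cong\Hol(G)$ (so that maxima of element orders on the two sides coincide), and, in part~(2), citing correctly that $\Aut(R)$ is semisimple and complete for $R$ a finite centerless CR-group --- the latter via Rose's observation that $\Aut(\Aut(R))\cong\N_{\Aut(R)}(\Aut(R))=\Aut(R)$.
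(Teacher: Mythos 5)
Your proposal is correct and follows essentially the same route as the paper: both deduce (1) directly from Theorem~\ref{regularTheo} via the equivalence $\Lambda(\psi)=\ord(\psi)$ for permutations with a regular cycle and the isomorphism $\Aff(G)\cong\Hol(G)$, and both obtain (2) by applying (1) to the semisimple group $\Aut(R)$ together with its completeness. Your write-up just makes explicit a few steps the paper leaves implicit.
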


\begin{proof}
For (1): (i) is an immediate consequence of Theorem \ref{regularTheo}, and (ii) also follows from Theorem \ref{regularTheo} and the fact that $\Aff(G)\cong\Hol(G)$.

For (2): As for (i), note that $\Aut(R)$ is semisimple, and so by (1,i), we have $\Lambda(\Aut(R))=\mao(\Aut(R))=\meo(\Aut(\Aut(R)))=\meo(\Aut(R))=\mao(R)$, where the second-to-last equality follows from the completeness of $\Aut(R)$. (ii) just is a special case of (1,ii).
\end{proof}

\subsection{Upper bounds on element orders in wreath products}\label{subsec2P3}

We will need upper bounds on $\meo(G)$ and $\mao(G)$ for finite semisimple groups $G$. To this end, some bounds on orders of elements in wreath products in general come in handy. Before formulating and proving Lemma \ref{wreathLem} below, we introduce the following notation and terminology:

\begin{deffinition}\label{wreathDef}
Let $G$ be a finite group, $n\in\mathbb{N}^+$, and $\psi\in\Sym_n$.

\noindent (1) Let $g=(g_1,\ldots,g_n)\in G^n$. For $i=1,\ldots,n$, we define $\el_i^{(\psi)}(g):=g_ig_{\psi^{-1}(i)}\cdots g_{\psi^{-\cl_{\psi}(i)+1}(i)}\in G$. Alternatively, one can describe $\el_i^{(\psi)}(g)$ as the image of $\sh_{\tau_{\psi}}^{(\cl_{\psi}(i))}(g)\in G^n\leq G\wr\Sym_n$ under the projection $\pi_i:G^n\rightarrow G$ onto the $i$-th component.

\noindent (2) We denote the set of orbits of the action of $\psi$ on $\{1,\ldots,n\}$ by $\Orb(\psi)$.

\noindent (3) An \textbf{assignment to $\psi$ in $G$} is a function $\beta:\Orb(\psi)\rightarrow G$. For such an assignment $\beta$, we define its \textbf{order} to be the least common multiple of the numbers $\ord(\beta(O)^{\frac{\ord(\beta)}{|O|}})$, where $O$ runs through $\Orb(\psi)$.
\end{deffinition}

\begin{lemmma}\label{wreathLem}
Let $G$ be a finite group, $n\in\mathbb{N}^+$, denote by $\pi:G\wr\Sym_n\rightarrow\Sym_n$ the canonical projection, and let $\psi\in\Sym_n$.

\noindent (1) Let $g=(g_1,\ldots,g_n)\in G^n$ and consider the element $x:=(g,\psi)\in G^n\rtimes\Sym_n=G\wr\Sym_n$. Then for $i=1,\ldots,n$, the $i$-th component of $x^{\ord(\psi)}\in G^n$ equals $\el_i^{(\psi)}(g)^{\frac{\ord(\psi)}{\cl_{\psi}(i)}}$.

\noindent (2) In particular, the maximum order of an element $x\in G\wr\Sym_n$ such that $\pi(x)=\psi$ equals the product of $\ord(\psi)$ with the maximum order of an assignment to $\psi$ in $G$ and is bounded from above by $\ord(\psi)\cdot\meo(G^{|\Orb(\psi)|})$.
\end{lemmma}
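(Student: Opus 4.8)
\textbf{Plan of proof for Lemma \ref{wreathLem}.}

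The strategy is to compute $x^{\ord(\psi)}$ explicitly inside the wreath product $G\wr\Sym_n$ and then read off the maximum-order statement. First I would recall the multiplication rule in $G\wr\Sym_n=G^n\rtimes\Sym_n$: for $(g,\sigma),(h,\tau)\in G^n\rtimes\Sym_n$, one has $(g,\sigma)(h,\tau)=(g\cdot{}^{\sigma}h,\sigma\tau)$, where ${}^{\sigma}h$ denotes the coordinate-permuted tuple with $i$-th entry $h_{\sigma^{-1}(i)}$. An easy induction on $m\in\mathbb{N}^+$ then gives $(g,\psi)^m=(g\cdot{}^{\psi}g\cdot{}^{\psi^2}g\cdots{}^{\psi^{m-1}}g,\psi^m)$, whose $i$-th component is $g_ig_{\psi^{-1}(i)}\cdots g_{\psi^{-(m-1)}(i)}$. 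Taking $m=\ord(\psi)$, the second component becomes trivial, and the $i$-th component is the product of $\ord(\psi)/\cl_{\psi}(i)$ consecutive blocks each equal to $\el_i^{(\psi)}(g)=g_ig_{\psi^{-1}(i)}\cdots g_{\psi^{-\cl_{\psi}(i)+1}(i)}$ (the exponent sequence cycles with period $\cl_{\psi}(i)$), i.e. $\el_i^{(\psi)}(g)^{\ord(\psi)/\cl_{\psi}(i)}$. This proves part (1); the alternative description of $\el_i^{(\psi)}$ via $\sh_{\tau_{\psi}}^{(\cl_{\psi}(i))}$ in Definition \ref{wreathDef}(1) is just the same computation phrased with the shift notation and can be invoked directly.

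For part (2), fix $\psi$ and note that any $x\in G\wr\Sym_n$ with $\pi(x)=\psi$ is of the form $x=(g,\psi)$ for some $g\in G^n$, and $\ord(x)=\ord(\psi)\cdot\ord(x^{\ord(\psi)})$ since $\ord(\psi)\mid\ord(x)$ (as $\pi$ is a homomorphism) and $x^{\ord(\psi)}\in G^n$, which has order coprime-to-nothing-special but commutes appropriately — more precisely, writing $N=\ord(\psi)$ and $y=x^N\in G^n$, we have $\ord(x)=N\cdot\ord(y)$ because $x^k=1$ forces $N\mid k$ and then $x^k=y^{k/N}$. By part (1), $y$ has $i$-th component $\el_i^{(\psi)}(g)^{N/\cl_{\psi}(i)}$, so $\ord(y)=\lcm_{i}\ord\bigl(\el_i^{(\psi)}(g)^{N/\cl_{\psi}(i)}\bigr)$, and since $\el_i^{(\psi)}(g)$ depends (up to cyclic rotation of the product, hence up to conjugacy, hence with the same order) only on the orbit $O$ of $i$ under $\psi$, with $|O|=\cl_{\psi}(i)$, this is exactly the order of the assignment $\beta\colon\Orb(\psi)\to G$ sending $O\mapsto\el_{i_O}^{(\psi)}(g)$ for a chosen representative $i_O\in O$. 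Conversely every assignment arises this way (choose $g$ supported on one coordinate per orbit), so the maximum order of such an $x$ is $\ord(\psi)$ times the maximum order of an assignment to $\psi$ in $G$.

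The final bound $\ord(\psi)\cdot\meo(G^{|\Orb(\psi)|})$ follows because an assignment $\beta$ of order $\lcm_{O}\ord(\beta(O)^{\ord(\beta)/|O|})$ has order dividing $\lcm_{O}\ord(\beta(O))$, which is at most $\meo\bigl(\prod_{O\in\Orb(\psi)}G\bigr)=\meo(G^{|\Orb(\psi)|})$ — the tuple $(\beta(O))_{O}$ is an element of that direct power, and taking the relevant powers only divides the orders. No real obstacle is expected here; the only point requiring a little care is the bookkeeping in the induction of part (1) — tracking how the index pattern $i,\psi^{-1}(i),\psi^{-2}(i),\dots$ wraps around after $\cl_{\psi}(i)$ steps — and the observation that cyclic rotation of the product $\el_i^{(\psi)}(g)$ (which is what happens when one changes the orbit representative) preserves order, so that the assignment is well defined on orbits rather than on individual coordinates.
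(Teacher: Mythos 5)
Your proposal is correct and follows essentially the same route as the paper: the explicit componentwise formula for $x^{\ord(\psi)}$, the factorization $\ord(x)=\ord(\psi)\cdot\ord(x^{\ord(\psi)})$, the observation that $\el_i^{(\psi)}(g)$ for $i$ in a fixed orbit are cyclic rotations of one another (hence conjugate, hence of equal order), and the realization of an arbitrary assignment by supporting $g$ on one coordinate per orbit all match the paper's argument. The only cosmetic difference is that in part (1) you compute $(g,\psi)^m$ by direct induction on $m$, whereas the paper packages the identical computation through the shift calculus of Lemma \ref{shiftLem}(2) applied to $\sh_{\tau_{\psi}}(g)$ inside $G\wr\Sym_n$.
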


\begin{proof}
For (1): We may assume that $G$ is nontrivial. Fix $i$, and denote by $\pi_i:G^n\rightarrow G$ the projection onto the $i$-th component. It is clear that $x^{\ord(\psi)}=\sh_{\tau_{\psi}}(g)$ (where the shift is formed inside $G\wr\Sym_n$ and $\tau_{\psi}$ is the inner automorphism of $G\wr\Sym_n$ with respect to $\psi$), whence $\pi_i(x^{\ord(\psi)})=\pi_i(\sh_{\tau_{\psi}}(g))$. But the $i$-th component of $\sh_{\tau_{\psi}}(g)$ only depends on the components of $g$ whose indices are from the orbit $O_i$ of $i$ under $\psi$, so if we denote by $\tilde{g}$ the element of $G^n$ which has the same entries as $g$ in the components whose indices are in $O_i$ but all other entries equal to $1_G$, we have $\pi_i(x^{\ord(\psi)})=\pi_i(\sh_{\tau_{\psi}}(\tilde{g}))$. Now note that $\cl_{\psi}(i)$ is a multiple of $\cl_{\tau_{\psi}}(\tilde{g})$ and a divisor of $\ord(\psi)=\ord(\tau_{\psi})$, which gives us, by an application of Lemma \ref{shiftLem}(2), \[\pi_i(x^{\ord(\psi)})=\pi_i(\sh_{\tau_{\psi}}(\tilde{g}))=\pi_i(\sh_{\tau_{\psi}}^{(\cl_{\psi}(i))}(\tilde{g})^{\frac{\ord(\psi)}{\cl_{\psi}(i)}})=\pi_i(\sh_{\tau_{\psi}}^{(\cl_{\psi}(i))}(\tilde{g}))^{\frac{\ord(\psi)}{\cl_{\psi}(i)}}=\]\[\pi_i(\sh_{\tau_{\psi}}^{(\cl_{\psi}(i))}(g))^{\frac{\ord(\psi)}{\cl_{\psi}(i)}}=\el_i^{(\psi)}(g)^{\frac{\ord(\psi)}{\cl_{\psi}(i)}}.\]

\noindent For (2): For any element $x\in G\wr\Sym_n$ of the form $(g,\psi)$, we have $\ord(x)=\ord(\psi)\cdot\ord(x^{\ord(\psi)})$, where, by (1), the second factor is the least common multiple of the numbers $\ord(\el_i^{(\psi)}(g)^{\frac{\ord(\psi)}{\cl_{\psi}(i)}})$ for $i=1,\ldots,n$. Fix a set $\mathcal{R}$ of representatives of the orbits of $\psi$, which is in canonical bijection with $\Orb(\psi)$. It is not difficult to see that if $i,j\in\{1,\ldots,n\}$ are from the same orbit under $\psi$, then $\el_i^{(\psi)}(g)^{\frac{\ord(\psi)}{\cl_{\psi}(i)}}$ and $\el_j^{(\psi)}(g)^{\frac{\ord(\psi)}{\cl_{\psi}(j)}}$ are conjugate in $G$ and thus have the same order, so $\ord(x^{\ord(\psi)})$ is equal to just the least common multiple of the numbers $\ord(\el_i^{(\psi)}(g)^{\frac{\ord(\psi)}{\cl{\psi}(i)}})$ for $i\in\mathcal{R}$. Therefore, composing the canonical bijection $\Orb(\psi)\to\mathcal{R}$ with the function $\mathcal{R}\rightarrow G,i\mapsto\el_i^{(\psi)}(g)$ gives an assignment to $\psi$ in $G$ whose order coincides with $\ord(x^{\ord(\psi)})$. Conversely, if any assignment $\beta$ to $\psi$ in $G$ is given, by choosing the components $g_1,\ldots,g_n$ of $G$ such that for all $O\in\Orb(\psi)$ there exists $i\in O$ such that $g_ig_{\psi^{-1}(i)}\cdots g_{\psi^{-\cl_{\psi}(i)+1}(i)}=\beta(O)$, we can assure that $\ord((g,\psi)^{\ord(\psi)})=\ord(\beta)$. This proves the claim.
\end{proof}

\subsection{Landau's and Chebyshev's function}\label{subsec2P4}

Both Landau's function $g:\mathbb{N}^+\rightarrow\mathbb{N}^+,n\mapsto\meo(\Sym_n)$, and Chebyshev's function $\psi:\mathbb{N}^+\rightarrow\mathbb{N}^+,n\mapsto\log(\exp(\Sym_n))$, are well-studied in analytic number theory. Apart from information on their asymptotic growth behavior, explicit upper bounds are also available. More explicitly, Massias \cite[Th{\'e}or{\`e}me, p.~271]{Mas84a} proved that $\log(g(n))\leq 1.05314\cdot\sqrt{n\log(n)}$ for all $n\in\mathbb{N}^+$, and Rosser and Schoenfeld \cite[Theorem 12]{RS62a} that $\psi(n)<1.03883\cdot n$ for all $n\in\mathbb{N}^+$.

The latter result translates into an exponential upper bound on $\Psi:=\exp\circ\psi$. For $n\leq 27$, the following best possible exponential bound on $g(n)$ is sharper than the subexponential bound by Massias, and its use will make some of our arguments easier:

\begin{propposition}\label{landauProp}
For all $n\in\mathbb{N}^+$, we have $g(n)\leq 3^{\frac{n}{3}}$, with equality if and only if $n=3$.\qed
\end{propposition}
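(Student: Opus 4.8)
The plan is to prove the bound $g(n)\leq 3^{n/3}$ by a short combinatorial-optimization argument, treating $g(n)=\meo(\Sym_n)$ directly via its standard description. Recall that $\meo(\Sym_n)=\max\operatorname{lcm}(a_1,\dots,a_k)$, the maximum taken over all partitions $n=a_1+\cdots+a_k$ of $n$ into positive parts; the maximum is realized by a partition into parts that are powers of distinct primes (replacing a part $a=\prod p_i^{e_i}$ by the parts $p_i^{e_i}$ never increases the sum and never decreases the lcm). So it suffices to show: whenever $a_1,\dots,a_k$ are pairwise-coprime prime powers (in fact, whenever they are any positive integers whose sum is $n$), we have $\prod_i a_i\leq 3^{n/3}$, since $\operatorname{lcm}(a_1,\dots,a_k)\leq\prod_i a_i$ for such parts.

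First I would reduce the claim to the inequality $m\leq 3^{m/3}$ for every positive integer $m$, equivalently $m^{1/m}\leq 3^{1/3}$. This is the familiar fact that the function $t\mapsto t^{1/t}$ on the positive reals is maximized at $t=\e$ and that, among the integers, $3^{1/3}$ beats $2^{1/2}$ and $4^{1/4}$ (and hence, by monotonic decay of $t^{1/t}$ for $t\geq\e$, every integer $m\geq 4$); one checks $3^{1/3}>2^{1/2}$ by cubing both sides to $9>8$, and $3^{1/3}=3^{4/12}=81^{1/12}>64^{1/12}=4^{1/3}$. Granting $m\leq 3^{m/3}$ for every positive integer $m$, I would write, for a partition $n=a_1+\cdots+a_k$,
\[
\operatorname{lcm}(a_1,\dots,a_k)\leq a_1\cdots a_k\leq \prod_{i=1}^k 3^{a_i/3}=3^{(a_1+\cdots+a_k)/3}=3^{n/3},
\]
and taking the maximum over all partitions gives $g(n)\leq 3^{n/3}$.

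For the equality statement I would argue that equality in the chain above forces equality in each factor bound $a_i\leq 3^{a_i/3}$, hence each $a_i\in\{3\}$ — wait, $a_i=3$ is the unique integer with $a_i=3^{a_i/3}$ — so every part equals $3$; additionally equality in $\operatorname{lcm}(a_1,\dots,a_k)\leq\prod a_i$ with all parts equal to $3$ forces $k=1$. Hence $n=3$ and the optimal partition is the trivial one, giving $g(3)=3=3^{3/3}$. Conversely $g(3)=3$ since $\Sym_3$ has an element of order $3$ and none of larger order. The converse direction ``$n=3\Rightarrow$ equality'' is immediate, and $g(1)=g(2)=1,2$ are both strictly below $3^{1/3},3^{2/3}$, so no other small cases sneak in.

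The only mildly delicate point — and the ``hard part'', though it is genuinely routine — is organizing the base cases of $m^{1/m}\leq 3^{1/3}$ so that the monotonicity argument for $m\geq 4$ is airtight: one needs $t^{1/t}$ to be strictly decreasing on $[\e,\infty)$ (standard, from the derivative of $\tfrac{\log t}{t}$), which handles all integers $m\geq 3$ at once once $3>\e$ is noted, and then $m=1,2$ are checked by hand ($1<3^{1/3}$, $2^{1/2}<3^{1/3}\Leftrightarrow 8<9$). I expect no real obstacle; the proof is a two-line optimization once the partition description of $\meo(\Sym_n)$ and the scalar inequality $m\leq 3^{m/3}$ are in place.
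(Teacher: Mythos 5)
Your argument is correct. The paper states this proposition without proof (it is asserted as known, with the \qed in the statement), so there is nothing to compare against; your write-up supplies the standard argument: $g(n)$ is the maximum of $\lcm(a_1,\dots,a_k)$ over partitions $n=a_1+\cdots+a_k$, one bounds $\lcm\leq\prod a_i\leq\prod 3^{a_i/3}=3^{n/3}$ using the scalar inequality $m\leq 3^{m/3}$ (equality only at $m=3$), and the equality analysis correctly forces a single part equal to $3$. One cosmetic slip: in your chain $3^{1/3}=81^{1/12}>64^{1/12}$ the right-hand side is $4^{1/4}$, not $4^{1/3}$; the inequality you actually need ($3^{1/3}>4^{1/4}$) is the one your computation proves, and in any case the case $m\geq 4$ already follows from the monotonicity of $t\mapsto t^{1/t}$ on $[\e,\infty)$, so this does not affect correctness.
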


We conclude with the following consequence of Lemma \ref{wreathLem}:

\begin{lemmma}\label{boundLem}
(1) Let $G$ be a finite group, $n\in\mathbb{N}+$. Then $\meo(G\wr\Sym_n)\leq g(n)\cdot\meo(G^n)$.

\noindent (2) Let $S$ be a nonabelian finite simple group, $n\in\mathbb{N}$. Then $g(n)\cdot\meo(\Aut(S)^n)<|S|^{n/3}$ implies that $\Lambda(\Aut(S^n))<|S^n|^{1/3}$ and $\Lambda_{\aff}(\Aut(S^n))<|S^n|^{2/3}$.
\end{lemmma}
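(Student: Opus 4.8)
\textbf{Proof plan for Lemma \ref{boundLem}.}

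The plan is to deduce part (1) directly from Lemma \ref{wreathLem}(2). Since $\Aut(S^n) = \Aut(S)\wr\Sym_n$ and the canonical projection $\pi\colon G\wr\Sym_n\to\Sym_n$ is surjective, the maximum order of an element of $G\wr\Sym_n$ is the maximum, over $\psi\in\Sym_n$, of the quantity in Lemma \ref{wreathLem}(2), which is bounded by $\ord(\psi)\cdot\meo(G^{|\Orb(\psi)|})$. Now $\ord(\psi)\leq g(n)$ by the definition of Landau's function, and $|\Orb(\psi)|\leq n$ together with the obvious monotonicity $\meo(G^k)\leq\meo(G^n)$ for $k\leq n$ gives the bound $\meo(G\wr\Sym_n)\leq g(n)\cdot\meo(G^n)$. (The case $n=0$ is handled by the convention that the empty wreath product is trivial; I would just assume $n\in\mathbb{N}^+$ as the statement intends, or note $g(0)$ and $\meo(G^0)$ are both $1$.)

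For part (2), I would start from Corollary \ref{regularCor}(1): since $\Aut(S^n)$ is a finite semisimple group (it sits between $\Inn(S^n)$ and $\Aut(S^n)$, or more directly because $S^n$ is characteristically simple, hence $\Aut(S^n)$ is complete and semisimple), Theorem \ref{regularTheo} applies and yields $\Lambda(\Aut(S^n)) = \mao(S^n) = \meo(\Aut(S^n))$ and $\Lambda_{\aff}(\Aut(S^n)) = \meo(\Hol(\Aut(S^n)))$. So it suffices to bound these two maximum element orders. By part (1), $\mao(S^n) = \meo(\Aut(S^n)) = \meo(\Aut(S)\wr\Sym_n)\leq g(n)\cdot\meo(\Aut(S)^n)$, which is $<|S|^{n/3} = |S^n|^{1/3}$ by hypothesis; this gives the first claimed inequality $\Lambda(\Aut(S^n))<|S^n|^{1/3}$.

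The one genuinely new point is the $\Lambda_{\aff}$-bound, i.e.\ bounding $\meo(\Hol(\Aut(S^n)))$. Writing $H:=\Aut(S^n)$, we have $\ord(\A_{x,\alpha}) = \ord(\alpha)\cdot\ord(\sh_\alpha(x))$ for $x\in H$, $\alpha\in\Aut(H)$ (the order formula recalled before Definition \ref{shiftDef}/in Subsection \ref{subsec2P2}). Since $\sh_\alpha(x)\in H$, we have $\ord(\sh_\alpha(x))\leq\meo(H)$, and since $\alpha\in\Aut(H)$ we have $\ord(\alpha)\leq\mao(H) = \meo(\Aut(H)) = \meo(H)$, the last equality because $H = \Aut(S^n)$ is a complete group (automorphism group of a finite centerless CR-group). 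Hence $\meo(\Hol(H))\leq\meo(H)^2 < (|S^n|^{1/3})^2 = |S^n|^{2/3}$, using the bound on $\meo(H)$ from the previous paragraph. This finishes part (2). The main obstacle, such as it is, is simply assembling the right facts in the right order — there is no hard estimate here, only the need to invoke Corollary \ref{regularCor} (hence Theorem \ref{regularTheo}), the completeness of $\Aut(S^n)$, and the $\ord(\A_{x,\alpha}) = \ord(\alpha)\cdot\ord(\sh_\alpha(x))$ identity to convert the holomorph into a product of two element-order bounds.
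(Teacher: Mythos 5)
Your proof is correct and follows essentially the same route as the paper: part (1) from Lemma \ref{wreathLem}(2) via $\ord(\psi)\leq g(n)$ and monotonicity of $\meo(G^k)$, and part (2) from Corollary \ref{regularCor}(2) together with the bound $\meo(\Hol(\Aut(S^n)))\leq\meo(\Aut(S^n))\cdot\meo(\Aut(\Aut(S^n)))=\meo(\Aut(S^n))^2$, using completeness of $\Aut(S^n)$. The paper merely states the semidirect-product inequality where you derive it explicitly from the order formula for periodic affine maps; the content is identical.
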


\begin{proof}
For (1): This follows immediately from Lemma \ref{wreathLem}(2).

\noindent For (2): Using Corollary \ref{regularCor}(2), we conclude that $\Lambda(\Aut(S^n))=\meo(\Aut(S^n))=\meo(\Aut(S)\wr\Sym_n)\leq g(n)\cdot\meo(\Aut(S)^n)<|S|^{n/3}=|S^n|^{1/3}$, and that $\Lambda_{\aff}(\Aut(S^n))=\meo(\Hol(\Aut(S^n)))=\meo(\Aut(S^n)\rtimes\Aut(\Aut(S^n)))\leq\meo(\Aut(S^n))\cdot\meo(\Aut(\Aut(S^n)))=\meo(\Aut(S^n))^2<|S^n|^{2/3}$.
\end{proof}

\section{Reduction to the main lemma}\label{sec3}

The aforementioned \enquote{main lemma} is the following:

\begin{lemma}\label{mainLem}
Let $G$ be a finite nonabelian characteristically simple group. Then:

\noindent (1) $\Lambda(\Aut(G))<|G|^{\frac{1}{3}}$, with the following exceptions:

(i) $G\cong\PSL_2(q)$ for some primary $q\geq 5$. In this case, $\Lambda(\Aut(G))=q+1$, we have $\frac{1}{3}<\log_{|G|}(q+1)\leq\frac{\log(q+1)}{\log(\frac{1}{2}q(q^2-1))}$, and for $q\to\infty$, this upper bound converges to $\frac{1}{3}$ strictly monotonously from above.

(ii) $G\cong\PSL_2(p)^2$ for some prime $p\geq 5$. In this case, $\Lambda(\Aut(G))=p(p+1)$, we have $\frac{1}{3}<\log_{|G|}(p(p+1))=\frac{\log(p(p+1))}{\log(\frac{1}{2}p(p^2-1))}$, and for $p\to\infty$, this upper bound converges to $\frac{1}{3}$ strictly monotonously from above.

(iii) $G\cong\PSL_2(p)^3$ for some prime $p\geq 5$. In this case, $\Lambda(\Aut(G))=\frac{1}{2}p(p^2-1)=|G|^{\frac{1}{3}}$.

\noindent (2) $\Lambda_{\aff}(\Aut(G))\leq|G|^{\frac{2}{3}}$, with the following exceptions: $G\cong\PSL_2(p)$ for some prime $p\geq 5$. In this case, $\Lambda_{\aff}(\Aut(G))=p(p+1)$, we have $\frac{2}{3}<\log_{|G|}(p(p+1))=\frac{\log(p(p+1))}{\log(\frac{1}{2}p(p^2-1))}$, and for $p\to\infty$, this upper bound converges to $\frac{2}{3}$ strictly monotonously from above.
\end{lemma}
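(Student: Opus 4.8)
The plan is to reduce everything to finite simple groups via the wreath product structure, then invoke the CFSG together with the element-order bounds from Subsection \ref{subsec2P3}. Concretely, write $G\cong S^n$ for a finite nonabelian simple group $S$ and $n\in\mathbb{N}^+$, so that $\Aut(G)=\Aut(S)\wr\Sym_n$. By Corollary \ref{regularCor}(2), $\Lambda(\Aut(G))=\mao(S^n)=\meo(\Aut(S^n))$ and $\Lambda_{\aff}(\Aut(G))=\meo(\Hol(\Aut(S^n)))=\meo(\Aut(S^n))^2$ (using completeness of $\Aut(S^n)$), so both quantities are governed by $\meo(\Aut(S)\wr\Sym_n)$, which by Lemma \ref{boundLem}(1) is at most $g(n)\cdot\meo(\Aut(S)^n)$. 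Thus the entire problem reduces to the inequality $g(n)\cdot\meo(\Aut(S)^n)<|S|^{n/3}$ (cf.\ Lemma \ref{boundLem}(2)); whenever this holds we get both $\Lambda(\Aut(G))<|G|^{1/3}$ and $\Lambda_{\aff}(\Aut(G))<|G|^{2/3}$, with no exceptions. Since $\meo(\Aut(S)^n)=\meo(\Aut(S))=\mao(S)$, it suffices to show $g(n)\cdot\mao(S)^{\cdots}$ — more precisely, writing $f(S):=\log_{|S|}\mao(S)$ and using $g(n)\le 3^{n/3}$ (Proposition \ref{landauProp}) — that $3^{n/3}\cdot|S|^{n f(S)}<|S|^{n/3}$, i.e.\ roughly $f(S)<\tfrac13-\tfrac{\log 3}{3\log|S|}$. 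So the combinatorial dependence on $n$ essentially washes out, and everything hinges on bounding $\mao(S)=\meo(\Aut(S))$ relative to $|S|^{1/3}$ for each finite simple group $S$.

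Next I would run through the CFSG list. For alternating groups $S=\Alt_m$ ($m\ge 5$), $\Aut(S)=\Sym_m$ for $m\ne 6$, and $\meo(\Sym_m)=g(m)$ grows subexponentially in $m$ while $|S|^{1/3}=(m!/2)^{1/3}$ grows superexponentially, so the inequality holds comfortably for all $m\ge 5$ (the case $m=6$ needs only a trivial separate check since $[\Aut(\Alt_6):\Sym_6]=2$). For groups of Lie type, I would use the standard bound $\meo(\Aut(S))\le \meo(S)\cdot|\Out(S)|\cdot(\text{field automorphism contribution})$; the key input is that for a group of Lie type of rank $r$ over $\mathbb{F}_q$, $\meo(S)$ is polynomially bounded in $q$ of degree roughly $r+1$ (more precisely bounded by something like $q^{r+1}$ or $(q^r-1)\cdot q$ type expressions), while $|S|$ grows like $q^{\dim}$ with $\dim$ of order $r^2$ (or $2r$ for the rank-one families). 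Hence $f(S)=\log_{|S|}\mao(S)\to 0$ as $q\to\infty$ for every fixed type, and as the rank grows the ratio only improves; so only finitely many $(S,q)$ with small rank and small $q$ can fail $f(S)<\tfrac13$, and among those the genuine exceptions are the rank-one family $\PSL_2(q)$. For sporadic groups one simply checks the finite list by hand (ATLAS data): $\mao(S)$ is small relative to $|S|^{1/3}$ in every case. This disposes of all $S$ except $\PSL_2(q)$.

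The heart of the argument — and the main obstacle — is the detailed analysis of $S=\PSL_2(q)$, where $|S|=\tfrac12 q(q^2-1)$ so $|S|^{1/3}\approx q$, and where $\meo(\Aut(\PSL_2(q)))$ is of exact order $q+1$ (the image of a Singer-type torus element, possibly composed with field/diagonal automorphisms), making $\mao(S)=q+1$ just barely exceed $|S|^{1/3}$. Here I would compute $\meo(\PSL_2(q))=\lcm(p,q-1)/\gcd\text{-corrections}$ — concretely $\meo(\PSL_2(q))$ equals $p$, $\tfrac{q-1}{\gcd(2,q-1)}$, or $\tfrac{q+1}{\gcd(2,q-1)}$ — and then $\meo(\Aut(\PSL_2(q)))$, taking the field automorphisms of order dividing $\log_p q$ and diagonal automorphism of order $\gcd(2,q-1)$ into account; the careful bookkeeping (and the fact stated in part (1,i) that $\Lambda(\Aut(\PSL_2(q)))=q+1$ exactly, not merely $\le q+1$) is where one must invoke Theorem \ref{regularTheo}/Corollary \ref{regularCor} to pass from $\meo$ to $\Lambda$, and must check that composing a torus element with a field automorphism does not produce a larger element order. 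Then the stated inequalities $\tfrac13<\log_{|G|}(q+1)\le\frac{\log(q+1)}{\log(\tfrac12 q(q^2-1))}$ and its monotone convergence to $\tfrac13$ are elementary calculus once one has $\Lambda(\Aut(S^n))=(q+1)\cdot(\text{lcm over }n\text{ coordinates})$ — and this is where $n$ re-enters: for $\PSL_2(q)$ one must determine for which $n$ the wreath-product element order $g(n)$-times-$(q+1)$ (optimally $\lcm$'d) still fits under $|G|^{1/3}=(\tfrac12 q(q^2-1))^{n/3}$. A short computation shows $(q+1)^n > (\tfrac12 q(q^2-1))^{n/3}$ always, but one can do better by splitting the $n$ coordinates into blocks and taking $\lcm$s: with $n$ coordinates one can realize $\lcm(q+1,\tfrac12 q(q^2-1))$-type orders, and the threshold is $n=3$, giving exception (iii) $\Lambda=\tfrac12 q(q^2-1)=|G|^{1/3}$, while $n=1,2$ give (i),(ii) and $n\ge 4$ gives strict inequality because then $|G|^{1/3}$ outpaces the best achievable lcm. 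For part (2), $\Lambda_{\aff}(\Aut(S^n))=\meo(\Aut(S^n))^2$ forces the comparison with $|G|^{2/3}=|S|^{2n/3}$, which for $\PSL_2(q)$ fails only at $n=1$ (giving $\Lambda_{\aff}=p(p+1)$ — note $\meo(\Aut(\PSL_2(p)))^2$ needs the sharper value here, and $q=p$ prime so there are no field automorphisms — wait, for $n=1$ one must recompute: $\meo(\Hol(\Aut(\PSL_2(q))))$ is not simply the square, so I would instead directly bound $\meo(\Hol(\Aut(S)))\le\meo(\Aut(S))^2$ and track when this is $\le|S|^{2/3}$, yielding the single exceptional family $\PSL_2(p)$, $p$ prime), and for $n\ge 2$ or non-prime $q$ the bound $\meo(\Aut(S))^2\le(q+1)^2\cdot g(n)^2<|S|^{2n/3}$ holds. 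Thus the main difficulty is entirely concentrated in the $\PSL_2(q)$ casework: getting $\meo(\Aut(\PSL_2(q)))$ and $\meo(\Hol(\Aut(\PSL_2(q))))$ exactly right, and then the elementary but fiddly determination of exactly which $n$ produce the listed exceptions.
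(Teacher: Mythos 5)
Your top-level strategy is the paper's: reduce via Corollary \ref{regularCor} and Lemma \ref{boundLem} to the inequality $g(n)\cdot\meo(\Aut(S)^n)<|S|^{n/3}$, sweep the CFSG list, and concentrate the difficulty in $S=\PSL_2(q)$. But two of your simplifying claims are false in ways that leave real gaps. First, $\meo(\Aut(S)^n)\neq\meo(\Aut(S))$: element orders in a direct power are least common multiples across coordinates, so for instance $\meo(\PGL_2(p)^3)=\lcm(p-1,p,p+1)=\tfrac12 p(p^2-1)\gg p+1$ --- this is precisely where exceptions (ii) and (iii) come from, so the identity you start from would erase the phenomenon you are trying to locate. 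Your fallback bound $\meo(\Aut(S)^n)\leq\mao(S)^n$ makes the target inequality $n$-independent (essentially $3^{1/3}\mao(S)<|S|^{1/3}$), but that reduced inequality fails for non-exceptional groups: for $S=\Alt_7$ one has $3^{1/3}\cdot 12\approx 17.3>2520^{1/3}\approx 13.6$, and already at $n=2$ the bound $g(2)\cdot\mao(\Alt_7)^2=288$ exceeds $(\tfrac12\,7!)^{2/3}\approx 185$, whereas the true $\meo(\Sym_7^2)=\lcm(12,7)=84$ gives $168<185$. So the dependence on $n$ does not wash out; as in the paper one needs separate arguments for small $n$ and, for large $n$, the exponent bound $\meo(\Aut(S)^n)\leq\exp(\Aut(S))$ (Chebyshev's function), which is uniform in $n$ in a way that $\mao(S)^n$ is not.

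The larger gap is in part (2). The bound $\Lambda_{\aff}(\Aut(S))\leq\meo(\Aut(S))^2$ that you propose to ``track'' is insufficient exactly where it matters: for every primary $q\geq 5$ one has $(q+1)^2>(\tfrac12 q(q^2-1))^{2/3}$, so the square bound flags every $\PSL_2(q)$ as a potential exception and cannot separate prime $q$ (genuine exceptions, with value $p(p+1)$) from non-prime $q$ (not exceptions, with value $q^2-1$ or $\tfrac12(q^2-1)$). One must compute $\meo(\Hol(\Aut(\PSL_2(q))))$ exactly. The paper's mechanism for this (Theorem \ref{pslTheo}) is the identity $\ord(\A_{x,\alpha})=\ord(\alpha)\cdot\ord(\sh_{\alpha}(x))$, the observation that by completeness both factors are automorphism orders of $\PSL_2(q)$, a list of the few largest such orders (Lemma \ref{largestOrdLem}), and then the exclusion of the too-large combinations via divisibility constraints on cycle lengths (Lemma \ref{divisorLem}) and centralizer computations in $\Aut(\PSL_2(q))$ (Lemmas \ref{centralizerLem} and \ref{pslCentLem}), which force $x\in\PGL_2(q)$ and hence $\sh_{\alpha}(x)\in\PSL_2(q)$, a group with no elements of order $q\pm 1$. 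None of this machinery appears in your proposal, and ``careful bookkeeping'' is where the actual content of the lemma lives; a similar centralizer argument is also needed to handle $\Lambda_{\aff}(\Aut(\PSL_2(p)^2))$. The rest of your CFSG sweep (sporadic, large-rank Lie type, alternating for large $m$) is in the same spirit as the paper's and is fine modulo the $n$-dependence issue above.
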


The purpose of this section is to show how to deduce all the main results from Lemma \ref{mainLem}, so until the end of this section, the word \enquote{proof} means \enquote{proof conditional on Lemma \ref{mainLem}}. We first give the precise definition of the constants $E_1$ and $E_2$ from Theorem \ref{mainTheo1}:

\begin{notation}\label{eNot}
(1) We set $e_1:=\log_{60}(6)=0.437618\ldots$ and $E_1:=\frac{1}{e_1-1}=-1.778151\ldots$.

\noindent (2) We set $e_2:=\log_{60}(30)$ and $E_2:=\frac{1}{e_2-1}=-5.906890\ldots$.
\end{notation}

\begin{lemma}\label{auxiliaryLem1}
(1) For all finite nonabelian characteristically simple groups $G$, we have $\Lambda(\Aut(G))\leq|G|^{e_1}$, with equality if and only if $G\cong\PSL_2(5)\cong\Alt_5$.

\noindent (2) For every $\epsilon>0$, we have $\Lambda(\Aut(G))\leq|G|^{\frac{1}{3}+\epsilon}$ for almost all finite nonabelian characteristically simple groups $G$.

\noindent (3) For all finite nonabelian characteristically simple groups $G$, we have $\Lambda_{\aff}(\Aut(G))\leq|G|^{e_2}$, with equality if and only if $G\cong\PSL_2(5)\cong\Alt_5$.

\noindent (4) For every $\epsilon>0$, we have $\Lambda_{\aff}(\Aut(G))\leq|G|^{\frac{2}{3}+\epsilon}$ for almost all finite nonabelian characteristically simple groups $G$.
\end{lemma}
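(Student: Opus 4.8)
The plan is to derive Lemma~\ref{auxiliaryLem1} directly from the main lemma (Lemma~\ref{mainLem}) by a bookkeeping argument: the main lemma already isolates all finite nonabelian characteristically simple groups $G$ with $\Lambda(\Aut(G))\geq|G|^{1/3}$ (resp. $\Lambda_{\aff}(\Aut(G))\geq|G|^{2/3}$), so we only have to check that for each exceptional family the relevant exponent $\log_{|G|}(\Lambda(\Aut(G)))$ (resp. $\log_{|G|}(\Lambda_{\aff}(\Aut(G)))$) is at most $e_1$ (resp. $e_2$), with equality exactly at $\PSL_2(5)$. Parts (2) and (4) will then be essentially immediate from the convergence-to-$\tfrac13$ (resp. $\tfrac23$) statements already recorded in Lemma~\ref{mainLem}.

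For part (1): if $G$ is not one of the exceptions (i)--(iii), then $\Lambda(\Aut(G))<|G|^{1/3}<|G|^{e_1}$ since $e_1=\log_{60}(6)>\tfrac13$ (indeed $6^3=216<60^2$... rather, $6<60^{1/3}$ is false; the correct comparison is $e_1=\frac{\log 6}{\log 60}\approx0.4376>0.3333$), so the inequality is strict there. For the three exceptional families I would treat the exponent $f(q):=\log_{|G|}(\Lambda(\Aut(G)))$ as an explicit function of the single parameter $q$ (or $p$) and show it is strictly decreasing, so that its maximum over the family is attained at the smallest admissible value. In family (i), the smallest case is $q=5$, giving $G\cong\PSL_2(5)\cong\Alt_5$, $|G|=60$, $\Lambda(\Aut(G))=6$, hence exponent exactly $e_1=\log_{60}6$; the main lemma's monotonicity statement ($\log_{|G|}(q+1)\leq\frac{\log(q+1)}{\log(\frac12q(q^2-1))}\downarrow\frac13$) shows every other member of (i) has strictly smaller exponent. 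In family (ii) the smallest case is $p=5$: $|G|=3600$, $\Lambda(\Aut(G))=30$, exponent $\log_{3600}30=\frac{\log30}{2\log60}=\frac{\log30}{\log3600}$, and one checks $\frac{\log 30}{\log 3600}<\frac{\log 6}{\log 60}=e_1$ (equivalently $30^{\log 60}<6^{\log 3600}=6^{2\log 60}$, i.e. $30<36$), strict; monotonicity in $p$ handles the rest. In family (iii), $\Lambda(\Aut(G))=|G|^{1/3}$ exactly, and $\tfrac13<e_1$ gives strict inequality throughout. Thus equality in (1) holds only for $\PSL_2(5)$.

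For part (3) the argument is structurally identical but now compares against $e_2=\log_{60}30$. The only exceptional family is $G\cong\PSL_2(p)$ with $\Lambda_{\aff}(\Aut(G))=p(p+1)$; for $p=5$ we get $|G|=60$, $\Lambda_{\aff}(\Aut(G))=30$, exponent exactly $\log_{60}30=e_2$, and the main lemma's monotonicity ($\log_{|G|}(p(p+1))\downarrow\frac23$) shows larger $p$ give strictly smaller exponents, while non-exceptional $G$ satisfy $\Lambda_{\aff}(\Aut(G))\leq|G|^{2/3}<|G|^{e_2}$ since $e_2=\log_{60}30>\tfrac23$ (equivalently $30^3>60^2$, i.e. $27000>3600$). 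Hence equality in (3) holds only for $\PSL_2(5)$.

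For parts (2) and (4): fix $\epsilon>0$. By Lemma~\ref{mainLem}, any $G$ outside the finitely many exceptional families already satisfies $\Lambda(\Aut(G))<|G|^{1/3}<|G|^{1/3+\epsilon}$ (resp. the $\Lambda_{\aff}$/$\tfrac23$ analogue), and within each exceptional family the exponent converges to $\tfrac13$ (resp. $\tfrac23$) from above as the parameter $q$ (or $p$) tends to infinity, so only finitely many members of each family can have exponent $\geq\tfrac13+\epsilon$ (resp. $\geq\tfrac23+\epsilon$). As there are finitely many families, only finitely many $G$ in total violate the bound, which is exactly the claim for ``almost all''. The only point requiring any care — the main obstacle — is verifying the strict numerical inequalities $\frac{\log 30}{\log 3600}<\log_{60}6$ and $\log_{60}30>\tfrac23$ (and the analogous monotonicity of the one-parameter exponent functions), but these reduce to elementary comparisons of integers ($30<36$ and $27000>3600$) once one clears logarithms, so the proof is entirely routine given Lemma~\ref{mainLem}.
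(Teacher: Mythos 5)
Your proposal is correct and follows essentially the same route as the paper: the paper likewise deduces all four parts directly from Lemma \ref{mainLem}, noting that $\Lambda(\Aut(\PSL_2(5)))=6=60^{e_1}$ (resp.\ $\Lambda_{\aff}(\Aut(\PSL_2(5)))=30=60^{e_2}$) and invoking the strict monotonicity of the exceptional-family bounds to see that equality occurs only there, with (2) and (4) immediate from the finiteness of the exceptional sets. The only blemish is the garbled parenthetical comparison ``$6^3=216<60^2$'' for $e_1>\tfrac13$ (the correct clearing of logarithms is $6^3=216>60$), which you immediately self-correct, so it does not affect the argument.
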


\begin{proof}
The statements in (2) and (4) follow immediately from Lemma \ref{mainLem}. For (1), note that by Lemma \ref{mainLem}(1), we have $\Lambda(\Aut(\PSL_2(5)))=6=|\PSL_2(5)|^{e_1}$, and using the strict monotonicity of the upper bounds in Lemma \ref{mainLem}(1), it is not difficult to see that this is the only case where equality holds. The proof of (2) is analogous.
\end{proof}

\begin{lemma}\label{auxiliaryLem2}
Let $H$ be a finite semisimple group. Then:

\noindent (1) $\Lambda(H)\leq|\Soc(H)|^{e_1}$.

\noindent (2) $\Lambda_{\aff}(H)\leq|\Soc(H)|^{e_2}$.
\end{lemma}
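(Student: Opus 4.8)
The plan is to use the fact that a finite semisimple group $H$ embeds into $\Aut(\Soc(H))$ with image containing $\Inn(\Soc(H))$, so that $\Soc(H)\leq H\leq\Aut(\Soc(H))$ (identifying $\Soc(H)$ with its inner automorphism group). Since $\Soc(H)$ is a direct product $S_1^{n_1}\times\cdots\times S_r^{n_r}$ of nonabelian characteristically simple groups, and this tuple has the splitting property, Lemma \ref{transferLem} lets us reduce $\Lambda$- and $\Lambda_{\aff}$-bounds for $H$ to the corresponding bounds for $\Aut(\Soc(H))$, which in turn factor over the $\Aut(S_i^{n_i})$ by Lemma \ref{productLem}. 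The key estimate is then Lemma \ref{auxiliaryLem1}, giving $\Lambda(\Aut(S_i^{n_i}))\leq|S_i^{n_i}|^{e_1}$ for each factor.

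More precisely, for part (1): first observe that $\Lambda(H)\leq\Lambda(\Aut(\Soc(H)))$. This is not immediate from Lemma \ref{transferLem} as stated, since $H$ need not be a quotient or characteristic subgroup of $\Aut(\Soc(H))$. Instead, the cleanest route is: every automorphism $\alpha$ of $H$ corresponds, via Rose's observation (recalled in Subsection \ref{subsec2P2}), to conjugation by an element of $\N_{\Aut(\Soc(H))}(H)\leq\Aut(\Soc(H))$; so $\alpha$ acts on $H$ as the restriction of some element $\beta\in\Aut(\Soc(H))$ (acting on itself — hence as an element of $\Aut(\Aut(\Soc(H)))$, but by completeness of $\Aut(\Soc(H))$ this is just conjugation), and a cycle of $\alpha$ on $H$ has length dividing the order of the corresponding periodic affine map of $\Aut(\Soc(H))$; but more simply, since $\alpha$ permutes $H\subseteq\Aut(\Soc(H))$ and extends to a permutation of $\Aut(\Soc(H))$, we get $\Lambda(\alpha)\leq\Lambda(\text{that extension})\leq\Lambda_{\aff}(\Aut(\Soc(H)))$. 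Actually the simplest argument uses Corollary \ref{regularCor}: $\Lambda(H)=\mao(H)$ by Theorem \ref{horTheo}, and $\mao(H)=\meo(\Aut(H))=\meo(\N_{\Aut(\Soc(H))}(H))$ divides $\meo(\Aut(\Soc(H)))=\mao(\Soc(H))=\Lambda(\Aut(\Soc(H)))$ (using Corollary \ref{regularCor}(2,i)). Then, since the tuple $(S_1^{n_1},\ldots,S_r^{n_r})$ has the splitting property, $\Aut(\Soc(H))=\Aut(S_1^{n_1})\times\cdots\times\Aut(S_r^{n_r})$, so by Lemma \ref{productLem}(1) and Lemma \ref{fdsLcmLem}, $\Lambda(\Aut(\Soc(H)))\leq\prod_i\Lambda(\Aut(S_i^{n_i}))\leq\prod_i|S_i^{n_i}|^{e_1}=|\Soc(H)|^{e_1}$, invoking Lemma \ref{auxiliaryLem1}(1).

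For part (2), the argument is parallel: $\Lambda_{\aff}(H)=\meo(\Hol(H))$ by Corollary \ref{regularCor}(1,ii), and $\Hol(H)=H\rtimes\Aut(H)$, so $\meo(\Hol(H))\leq\meo(H)\cdot\meo(\Aut(H))=\meo(H)\cdot\mao(H)$. Both factors divide $\meo(\Aut(\Soc(H)))=\mao(\Soc(H))=\Lambda(\Aut(\Soc(H)))$, hence $\Lambda_{\aff}(H)\mid\Lambda_{\aff}(\Aut(\Soc(H)))$ — or, more directly, $\Lambda_{\aff}(H)\leq\Lambda_{\aff}(\Aut(\Soc(H)))$ since $\Aff(H)$ embeds into $\Aff(\Aut(\Soc(H)))$ (every periodic affine map of $H$ extends, via the natural action and completeness, appropriately). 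Then by Lemma \ref{productLem}(2) and Lemma \ref{fdsLcmLem}, $\Lambda_{\aff}(\Aut(\Soc(H)))\leq\prod_i\Lambda_{\aff}(\Aut(S_i^{n_i}))\leq\prod_i|S_i^{n_i}|^{e_2}=|\Soc(H)|^{e_2}$ by Lemma \ref{auxiliaryLem1}(3).

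The main obstacle I anticipate is making rigorous the step $\Lambda(H)\leq\Lambda(\Aut(\Soc(H)))$ (resp. $\Lambda_{\aff}(H)\leq\Lambda_{\aff}(\Aut(\Soc(H)))$): one must carefully justify that an automorphism (resp. periodic affine map) of $H$ extends to a periodic self-transformation of $\Aut(\Soc(H))$ of the right type whose maximum cycle length dominates. The slick way around this is exactly the element-order reformulation via Corollary \ref{regularCor} together with Rose's identification $\Aut(H)\cong\N_{\Aut(\Soc(H))}(H)$ and the completeness of $\Aut(\Soc(H))$, converting everything into divisibility statements among maximum element orders of subgroups of $\Aut(\Soc(H))$ — these are cleaner to verify and avoid any delicate cycle-tracking.
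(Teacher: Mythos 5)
Your final argument is essentially the paper's own proof: convert $\Lambda$ and $\Lambda_{\aff}$ into maximum element orders of $\Aut(H)$ and $\Hol(H)$ via Corollary \ref{regularCor}, pass to $\Aut(\Soc(H))$ using Rose's identification and completeness, split over the $\Aut(S_i^{n_i})$ by the splitting property and Lemma \ref{productLem}, and finish with Lemma \ref{auxiliaryLem1}. One caution on part (2): the first justification you offer there is a non sequitur --- from $\meo(\Hol(H))\leq\meo(H)\cdot\mao(H)$ with both factors at most $\meo(\Aut(\Soc(H)))$ you cannot conclude $\Lambda_{\aff}(H)\mid\Lambda_{\aff}(\Aut(\Soc(H)))$ (nor even $\leq$, since $\Lambda_{\aff}(\Aut(\Soc(H)))=\meo(\Hol(\Aut(\Soc(H))))$ is itself only bounded \emph{above} by $\meo(\Aut(\Soc(H)))^2$), and that route would only yield the exponent $2e_1=\log_{60}36>e_2=\log_{60}30$. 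Your ``more direct'' alternative --- the abstract group embedding $\Hol(H)\cong H\rtimes\N_{\Aut(\Soc(H))}(H)\hookrightarrow\Aut(\Soc(H))\rtimes\Aut(\Soc(H))\cong\Hol(\Aut(\Soc(H)))$, valid by completeness --- is exactly what the paper uses and is the version to keep; also, ``divides'' should be ``is at most'' throughout, since $\meo$ of a subgroup need not divide $\meo$ of the ambient group.
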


\begin{proof}
Let $S_1,\ldots,S_r$ be pairwise nonisomorphic nonabelian finite simple groups, $n_1,\ldots,n_r\in\mathbb{N}^+$ such that $\Soc(H)\cong S_1^{n_1}\times\cdots\times S_r^{n_r}$. Using the facts that $\Aut(H)$ embeds into $\Aut(\Soc(H))$, that $\Lambda(G)=\meo(\Aut(G))$ for all finite semisimple groups $G$ (Corollary \ref{regularCor}(1,i)) and that $\Lambda(R)=\meo(\Aut(R))=\Lambda(\Aut(R))$ for all finite centerless CR-groups $R$ (Corollary \ref{regularCor}(1,i) and (2,i)), we conclude that \[\Lambda(H)=\meo(\Aut(H))\leq\meo(\Aut(\Soc(H)))=\Lambda(\Soc(H))=\Lambda(S_1^{n_1}\times\cdots\times S_r^{n_r})\leq\]\[\leq\Lambda(S_1^{n_1})\cdots\Lambda(S_r^{n_r})=\Lambda(\Aut(S_1^{n_1}))\cdots\Lambda(\Aut(S_r^{n_r}))\leq|S_1|^{e_1n_1}\cdots|S_r|^{e_1n_r}=|\Soc(H)|^{e_1},\] where the last inequality follows from Lemma \ref{auxiliaryLem1}(1). This proves the inequality in (1). For (2), we use the fact that $H$ embeds into $\Aut(\Soc(H))$, that $\Lambda_{\aff}(G)=\meo(\Hol(G))$ for all finite semisimple groups $G$ (Corollary \ref{regularCor}(1,ii)) and that, by completeness of $\Aut(S_1^{n_1}\times\cdots\times S_r^{n_r})=\Aut(S_1^{n_1})\times\cdots\times\Aut(S_r^{n_r})$, the tuple $(\Aut(S_1^{n_1}),\ldots,\Aut(S_r^{n_r}))$ has the splitting property, to conclude, with one application of Lemma \ref{auxiliaryLem1}(3) at the end, that \[\Lambda_{\aff}(H)=\meo(\Hol(H))\leq\meo(\Hol(\Aut(\Soc(H))))=\Lambda_{\aff}(\Aut(\Soc(H)))=\]\[=\Lambda_{\aff}(\Aut(S_1^{n_1})\times\cdots\times\Aut(S_r^{n_r}))\leq\Lambda_{\aff}(\Aut(S_1^{n_1}))\cdots\Lambda_{\aff}(\Aut(S_r^{n_r}))\leq\]\[\leq|S_1|^{e_2n_1}\cdots|S_r|^{e_2n_r}=|\Soc(H)|^{e_2}.\]
\end{proof}

\begin{proof}[Proof of Theorem \ref{mainTheo1}]
For (1), using the assumption as well as Lemmata \ref{transferLem}(1) and \ref{auxiliaryLem2}(1), we conclude that $\rho\leq\lambda(G)\leq\lambda_{\aff}(\Rad(G))\cdot\lambda(G/\Rad(G))\leq1\cdot|G/\Rad(G)|^{e_1-1}$, and so $[G:\Rad(G)]\geq\rho^{\frac{1}{e_1-1}}$. The proof for (2) is analogous.
\end{proof}

\begin{proof}[Proof of Corollary \ref{mainCor}]
The statements about cycle lengths in $\Alt_5\cong\PSL_2(5)$ follow immediately from Lemma \ref{mainLem}. As for the two asserted implications:

\noindent For (1): By Theorem \ref{mainTheo1}(1) (and strict monotonicity of power functions), $\lambda(G)>\frac{1}{10}$ implies that $[G:\Rad(G)]<(\frac{1}{10})^{E_1}=60$, and thus that $[G:\Rad(G)]=1$.

\noindent For (2): This is similar to (1), but more involved. By Theorem \ref{mainTheo1}(2), $\lambda_{\aff}(G)>\frac{1}{4}$ implies that $[G:\Rad(G)]<(\frac{1}{4})^{E_2}=3600$. So if any nonsolvable finite group $G$ with $\lambda_{\aff}(G)>\frac{1}{4}$ existed, then $G/\Rad(G)$ would have socle a nonabelian finite simple group $S$ of order less than $3600$. By Lemma \ref{transferLem}(2), it would follow that $\lambda_{\aff}(S)>\frac{1}{4}$, so in order to get a contradiction, it suffices to check that $\lambda_{\aff}(S)\leq\frac{1}{4}$ for all nonabelian finite simple groups $S$ such that $|S|<3600$. By CFSG, there are precisely eight such $S$, namely $\PSL_2(q)$ for $q=5,7,9,8,11,13,17$ and $\Alt_7$. By Corollary \ref{regularCor}(1,ii), it is sufficient to compute $\frac{\meo(\Hol(S))}{|S|}$ for these eight $S$, which we did with the help of GAP \cite{GAP4}. For the $\PSL_2(q)$, the results are summarized in Table \ref{table1}, and we also got that $\lambda_{\aff}(\Alt_7)=\frac{1}{42}$:

\begin{table}[h]
\caption{$\lambda_{\aff}$-values of the nonabelian finite simple groups of order smaller than $3600$, excluding $\Alt_7$}\label{table1}
\begin{center}
\begin{tabular}{|c|c|c|c|c|c|c|c|c|}
\hline
$q$ & $5$ & $7$ & $9$ & $8$ & $11$ & $13$ & $17$ \\ \hline
$\lambda_{\aff}(\PSL_2(q))$ & $\frac{1}{4}$ & $\frac{1}{6}$ & $\frac{1}{9}$ & $\frac{1}{8}$ & $\frac{1}{10}$ & $\frac{1}{12}$ & $\frac{1}{16}$ \\ \hline
\end{tabular}
\end{center}
\end{table}
\end{proof}

For proving Theorem \ref{mainTheo2}, we introduce the following notation:

\begin{notation}\label{constantsNot}
(1) For $\kappa\in\left(0,\frac{2}{3}\right]$ and $\kappa_{\aff}\in\left(0,\frac{1}{3}\right]$, we denote by $\T^{(\kappa)}$ the set of finite nonabelian characteristically simple groups $T$ such that $\Lambda(\Aut(T))\geq|T|^{\frac{1}{3}+\kappa}$, and by $\T_{\aff}^{(\kappa_{\aff})}$ the set of finite nonabelian characteristically simple groups $T$ such that $\Lambda_{\aff}(\Aut(T))\geq|T|^{\frac{2}{3}+\kappa_{\aff}}$. Note that by Lemma \ref{mainLem}, $\T^{(\kappa)}$ and $\T_{\aff}^{(\kappa_{\aff})}$ are finite.

\noindent (2) For $\epsilon\in\left(0,\frac{2}{3}\right]$, $\epsilon_{\aff}\in\left(0,\frac{1}{3}\right]$ and $\rho\in\left(0,1\right)$, set

\[C^{(1)}(\epsilon,\rho):=\prod_{T\in\T^{(\rho\epsilon)}}{\frac{\Lambda(\Aut(T))}{|T|^{\frac{1}{3}+\rho\epsilon}}}\text{ and }C_{\aff}^{(1)}(\epsilon_{\aff},\rho):=\prod_{T\in\T_{\aff}^{(\rho\epsilon_{\aff})}}{\frac{\Lambda_{\aff}(\Aut(T))}{|T|^{\frac{2}{3}+\rho\epsilon_{\aff}}}},\]

\[C^{(2)}(\epsilon,\rho):=\prod_{T\in\T^{(\frac{1}{2}\rho\epsilon)}}{|T|}\text{ and }C_{\aff}^{(2)}(\epsilon_{\aff},\rho):=\prod_{T\in\T_{\aff}^{(\frac{1}{2}\rho\epsilon_{\aff})}}{|T|},\]

\[C(\epsilon,\rho):=C^{(1)}(\epsilon,\rho)^{\frac{1}{\rho/2\cdot\epsilon}}\cdot C^{(2)}(\epsilon,\rho)\text{ and }C_{\aff}(\epsilon_{\aff},\rho):=C_{\aff}^{(1)}(\epsilon_{\aff},\rho)^{\frac{1}{\rho/2\cdot\epsilon_{\aff}}}\cdot C_{\aff}^{(2)}(\epsilon_{\aff},\rho),\]

\[D(\epsilon,\rho):=\max\{|H|+1\mid H\text{ a finite semisimple group such that }|\Soc(H)|<C(\epsilon,\rho)\},\] and \[D_{\aff}(\epsilon_{\aff},\rho):=\max\{|H|+1\mid H\text{ a finite semisimple group such that }|\Soc(H)|<C_{\aff}(\epsilon_{\aff},\rho)\}.\]
\end{notation}

Theorem \ref{mainTheo2}(1) will follow rather easily from the following:

\begin{theorem}\label{auxiliaryTheo}
Let $\epsilon\in\left(0,\frac{2}{3}\right],\epsilon_{\aff}\in\left(0,\frac{1}{3}\right],\rho\in\left(0,1\right)$.

\noindent (1) Let $H$ be a finite semisimple group such that $|H|\geq D(\epsilon,\rho)$ holds. Then $\Lambda(H)\leq|\Soc(H)|^{\frac{1}{3}+\rho\epsilon}$.

\noindent (2) Let $H$ be a finite semisimple group such that $|H|\geq D_{\aff}(\epsilon_{\aff},\rho)$ holds. Then $\Lambda_{\aff}(H)\leq|\Soc(H)|^{\frac{2}{3}+\rho\epsilon_{\aff}}$.

\noindent (3) Let $G$ be a finite group such that $\Lambda(G)\geq|G|^{\frac{1}{3}+\epsilon}$. Then we have the following: $[G:\Rad(G)]\leq\max(D(\epsilon,\rho),|G|^{\frac{2/3-\epsilon}{2/3-\rho\epsilon}})$.

\noindent (4) Let $G$ be a finite group such that $\Lambda_{\aff}(G)\geq|G|^{\frac{2}{3}+\epsilon_{\aff}}$. Then we have the following: $[G:\Rad(G)]\leq\max(D_{\aff}(\epsilon_{\aff},\rho),|G|^{\frac{1/3-\epsilon_{\aff}}{1/3-\rho\epsilon_{\aff}}})$.
\end{theorem}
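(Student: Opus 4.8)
The plan is to derive all four parts of Theorem~\ref{auxiliaryTheo} from Lemma~\ref{mainLem} (equivalently, from the auxiliary results already built on it), treating parts (1)--(2) as the structural core and parts (3)--(4) as a reduction to (1)--(2) via the transfer lemma. I would begin with part (1). Let $H$ be finite semisimple with $|H|\geq D(\epsilon,\rho)$; by the definition of $D(\epsilon,\rho)$ in Notation~\ref{constantsNot}, this forces $|\Soc(H)|\geq C(\epsilon,\rho)$. Write $\Soc(H)\cong S_1^{n_1}\times\cdots\times S_r^{n_r}$ with the $S_i$ pairwise nonisomorphic nonabelian finite simple. As in the proof of Lemma~\ref{auxiliaryLem2}, $\Lambda(H)=\meo(\Aut(H))\leq\meo(\Aut(\Soc(H)))=\prod_{i=1}^r\Lambda(\Aut(S_i^{n_i}))$, so it suffices to bound each factor. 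Split the index set: for $i$ with $S_i^{n_i}\notin\T^{(\rho\epsilon)}$ we have $\Lambda(\Aut(S_i^{n_i}))<|S_i^{n_i}|^{\frac13+\rho\epsilon}$ by the very definition of $\T^{(\rho\epsilon)}$, so these factors already contribute at most $(\prod_{\text{good }i}|S_i^{n_i}|)^{\frac13+\rho\epsilon}$. For the finitely many exceptional $i$ with $S_i^{n_i}\in\T^{(\rho\epsilon)}$, write $\Lambda(\Aut(S_i^{n_i}))=\frac{\Lambda(\Aut(S_i^{n_i}))}{|S_i^{n_i}|^{\frac13+\rho\epsilon}}\cdot|S_i^{n_i}|^{\frac13+\rho\epsilon}$; the product of the fractional parts over all $T\in\T^{(\rho\epsilon)}$ is exactly $C^{(1)}(\epsilon,\rho)$ (at most, since not every $T\in\T^{(\rho\epsilon)}$ need occur in $H$). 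The upshot is $\Lambda(H)\leq C^{(1)}(\epsilon,\rho)\cdot|\Soc(H)|^{\frac13+\rho\epsilon}$.

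The remaining arithmetic is to absorb the constant $C^{(1)}(\epsilon,\rho)$ into the exponent, which is precisely why $C(\epsilon,\rho)$ is defined as $C^{(1)}(\epsilon,\rho)^{1/(\rho/2\cdot\epsilon)}\cdot C^{(2)}(\epsilon,\rho)$. I would split off a little extra room in the exponent: show $\Lambda(H)\leq|\Soc(H)|^{\frac13+\rho\epsilon}$ by checking $C^{(1)}(\epsilon,\rho)\leq|\Soc(H)|^{\frac{\rho\epsilon}{2}-\text{(the slack we actually have)}}$. Concretely, one runs the same estimate with $\rho\epsilon$ replaced by $\frac12\rho\epsilon$ for the ``good'' factors (they still satisfy $\Lambda(\Aut(S_i^{n_i}))<|S_i^{n_i}|^{\frac13+\frac12\rho\epsilon}$ whenever $S_i^{n_i}\notin\T^{(\frac12\rho\epsilon)}$), pulls the now-larger exceptional set $\T^{(\frac12\rho\epsilon)}$ entirely into a bounded constant $C^{(2)}(\epsilon,\rho)=\prod_{T\in\T^{(\frac12\rho\epsilon)}}|T|$, and obtains $\Lambda(H)\leq C^{(2)}(\epsilon,\rho)\cdot|\Soc(H)|^{\frac13+\frac12\rho\epsilon}$. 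Since $|\Soc(H)|\geq C(\epsilon,\rho)\geq C^{(1)}(\epsilon,\rho)^{2/(\rho\epsilon)}$ (so $C^{(1)}$ is not literally what I need here --- rather I need $C^{(2)}(\epsilon,\rho)\leq|\Soc(H)|^{\frac12\rho\epsilon}$, which follows from $|\Soc(H)|\geq C(\epsilon,\rho)\geq (C^{(2)})^{2/(\rho\epsilon)}$ once one checks $C^{(1)}{}^{1/(\rho/2\cdot\epsilon)}\geq C^{(2)}{}^{1/(\rho/2\cdot\epsilon)}/C^{(2)}$; in practice the clean route is: $C(\epsilon,\rho)\geq C^{(2)}(\epsilon,\rho)^{2/(\rho\epsilon)}$ because $C^{(1)}\geq1$), we get $C^{(2)}(\epsilon,\rho)\leq|\Soc(H)|^{\frac12\rho\epsilon}$, hence $\Lambda(H)\leq|\Soc(H)|^{\frac13+\rho\epsilon}$. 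Part (2) is word-for-word the same with $\Lambda$ replaced by $\Lambda_{\aff}$, $\frac13$ by $\frac23$, $\epsilon$ by $\epsilon_{\aff}$, $\T^{(\cdot)}$ by $\T_{\aff}^{(\cdot)}$, $C^{(j)}$ by $C^{(j)}_{\aff}$, using Corollary~\ref{regularCor}(1,ii) and the splitting property of $(\Aut(S_i^{n_i}))_i$ exactly as in Lemma~\ref{auxiliaryLem2}(2).

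For part (3): given a finite group $G$ with $\Lambda(G)\geq|G|^{\frac13+\epsilon}$, apply Lemma~\ref{transferLem}(1) with $N=\Rad(G)$, so $\Lambda(G)\leq\Lambda_{\aff}(\Rad(G))\cdot\Lambda(G/\Rad(G))$. But $\Rad(G)$ is solvable, and one knows (this is where I would invoke the solvable-case input --- either it is available from \cite{Bor15a} or it follows because a solvable group has $\lambda_{\aff}<1$ bounded away suitably; in any case the factor $\Lambda_{\aff}(\Rad(G))\leq|\Rad(G)|$ trivially) --- actually the cleanest move is just $\Lambda_{\aff}(\Rad(G))\leq|\Rad(G)|$, giving $|G|^{\frac13+\epsilon}\leq|\Rad(G)|\cdot\Lambda(G/\Rad(G))$. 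Set $H:=G/\Rad(G)$, a finite semisimple group with $\Soc(H)$ of index at most $|H|$, and $|\Rad(G)|=|G|/|H|$. If $|H|\geq D(\epsilon,\rho)$, part~(1) gives $\Lambda(H)\leq|\Soc(H)|^{\frac13+\rho\epsilon}\leq|H|^{\frac13+\rho\epsilon}$, so $|G|^{\frac13+\epsilon}\leq(|G|/|H|)\cdot|H|^{\frac13+\rho\epsilon}=|G|\cdot|H|^{-\frac23+\rho\epsilon}$, i.e. $|H|^{\frac23-\rho\epsilon}\leq|G|^{\frac23-\epsilon}$, i.e. $[G:\Rad(G)]=|H|\leq|G|^{\frac{2/3-\epsilon}{2/3-\rho\epsilon}}$ (note $\frac23-\rho\epsilon>0$ since $\rho<1$ and $\epsilon\leq\frac23$, and $\frac23-\epsilon$ may be negative, which only makes the bound smaller and is harmless). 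Otherwise $|H|<D(\epsilon,\rho)$, and the bound $[G:\Rad(G)]\leq D(\epsilon,\rho)$ holds trivially. In both cases $[G:\Rad(G)]\leq\max(D(\epsilon,\rho),|G|^{\frac{2/3-\epsilon}{2/3-\rho\epsilon}})$. Part~(4) is identical with $\Lambda\rightsquigarrow\Lambda_{\aff}$, $\frac13\rightsquigarrow\frac23$, $\frac23\rightsquigarrow\frac13$, $\epsilon\rightsquigarrow\epsilon_{\aff}$, $D\rightsquigarrow D_{\aff}$, using Lemma~\ref{transferLem}(2) and part~(2). The only genuinely delicate point --- and the one I would write out carefully --- is the exponent bookkeeping in parts (1)--(2): verifying that the two-scale trick ($\rho\epsilon$ for the main term, $\frac12\rho\epsilon$ for the term that gets absorbed) really does show $C^{(2)}(\epsilon,\rho)\leq|\Soc(H)|^{\frac12\rho\epsilon}$ under the hypothesis $|\Soc(H)|\geq C(\epsilon,\rho)$, given the precise definition $C(\epsilon,\rho)=C^{(1)}(\epsilon,\rho)^{2/(\rho\epsilon)}\cdot C^{(2)}(\epsilon,\rho)$ and the trivial bound $C^{(1)}\geq1$; everything else is a routine application of the already-established machinery.
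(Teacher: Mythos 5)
Your reduction of parts (3) and (4) to parts (1) and (2) via Lemma \ref{transferLem} is correct and is essentially the paper's argument (stated directly rather than by contraposition). The gap is in the absorption step of parts (1)--(2). Your two intermediate bounds, $\Lambda(H)\leq C^{(1)}(\epsilon,\rho)\cdot|\Soc(H)|^{\frac13+\rho\epsilon}$ and $\Lambda(H)\leq C^{(2)}(\epsilon,\rho)\cdot|\Soc(H)|^{\frac13+\frac12\rho\epsilon}$, are both fine, but to finish from the second you need $|\Soc(H)|\geq C^{(2)}(\epsilon,\rho)^{2/(\rho\epsilon)}$, and your \enquote{clean route} $C(\epsilon,\rho)\geq C^{(2)}(\epsilon,\rho)^{2/(\rho\epsilon)}$ \enquote{because $C^{(1)}\geq1$} is false: $C^{(1)}\geq1$ only yields $C\geq C^{(2)}$, whereas $2/(\rho\epsilon)\geq3$, so what you actually need is $(C^{(1)})^{2/(\rho\epsilon)}\geq (C^{(2)})^{2/(\rho\epsilon)-1}$ --- the inequality you yourself isolate and then wave away. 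That inequality fails whenever $\T^{(\frac12\rho\epsilon)}\neq\emptyset$: each ratio entering $C^{(1)}$ is at most $|T|^{e_1-\frac13}$ with $e_1-\frac13\approx0.104$, so $C^{(1)}\leq (C^{(2)})^{e_1-\frac13}$, which is far too small. Consequently your argument does not cover semisimple $H$ with $C(\epsilon,\rho)\leq|\Soc(H)|<C^{(2)}(\epsilon,\rho)^{2/(\rho\epsilon)}$, a nonempty range in general.

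The fix --- and what the paper actually does --- is a three-way rather than two-way partition of the factors $T_i$ of $\Soc(H)$: those in $\T^{(\rho\epsilon)}$ contribute a spillover of at most $C^{(1)}(\epsilon,\rho)$; those in $\T^{(\frac12\rho\epsilon)}\setminus\T^{(\rho\epsilon)}$ are bounded by $|T_i|^{\frac13+\rho\epsilon}$ with no spillover; and each $T_i\notin\T^{(\frac12\rho\epsilon)}$ satisfies $\Lambda(\Aut(T_i))<|T_i|^{\frac13+\rho\epsilon}\cdot|T_i|^{-\frac12\rho\epsilon}$. Since the product of the orders of the first two groups of factors is at most $C^{(2)}(\epsilon,\rho)$, the product of the orders of the third group is at least $|\Soc(H)|/C^{(2)}(\epsilon,\rho)\geq C(\epsilon,\rho)/C^{(2)}(\epsilon,\rho)=C^{(1)}(\epsilon,\rho)^{2/(\rho\epsilon)}$, so the accumulated saving $\bigl(\prod|T_i|\bigr)^{-\rho\epsilon/2}$ over that group is at most $C^{(1)}(\epsilon,\rho)^{-1}$, exactly cancelling the spillover. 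You have both ingredients (the $C^{(1)}$ spillover and the $\frac12\rho\epsilon$ slack) in separate paragraphs but never combine them; combined as above, your argument becomes the paper's proof, and the same correction applies verbatim to part (2).
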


\begin{proof}
For (1): Let $S_1,\ldots,S_r$ be pairwise nonisomorphic nonabelian finite simple groups and $n_1,\ldots,n_r\in\mathbb{N}^+$ such that $\Soc(H)\cong S_1^{n_1}\times\cdots\times S_r^{n_r}$. For $i=1,\ldots,r$, set $T_i:=S_i^{n_i}$. Note that, as in the proof of Lemma \ref{auxiliaryLem2}(1), we have $\Lambda(H)\leq\Lambda(\Aut(T_1))\cdots\Lambda(\Aut(T_r))$.

The idea is the following: We will bound each $\Lambda(\Aut(T_i))$ from above by a power $|T_i|^{f_i}$, and we would be done if all $f_i$ were less than or equal to $\frac{1}{3}+\rho\epsilon$. In view of the exceptional cases in Lemma \ref{mainLem}, we cannot expect this to happen, but since by the same lemma, almost all finite nonabelian characteristically simple $T$ satisfy $\Lambda(\Aut(T))<|T|^{\frac{1}{3}+\frac{\rho\epsilon}{2}}$, and this upper bound has some capacity to \enquote{swallow} factors greater than $1$ and still remain smaller than $|T|^{\frac{1}{3}+\rho\epsilon}$, if the order of $|\Soc(H)|$ is large enough, the \enquote{swallowing capacity} of the factors $T_i\notin\T^{(\frac{\rho\epsilon}{2})}$ will be big enough to make up for the \enquote{spillover} of all \enquote{problematic} factors coming from the finite set $\T^{(\rho\epsilon)}$.

Formally, we proceed as follows. W.l.o.g., assume that there exist $k,l\in\mathbb{N}$ with $k+l\leq r$ such that $T_1,\ldots,T_k\in\T^{(\rho\epsilon)}$, $T_{k+1},\ldots,T_{k+l}\in\T^{(\frac{1}{2}\rho\epsilon)}\setminus\T^{(\rho\epsilon)}$ and $T_{k+l+1},\ldots,T_r\notin\T^{(\frac{1}{2}\rho\epsilon)}$. Note that by definition of $D(\epsilon,\rho)$ and the assumption, we have $|\Soc(H)|\geq C(\epsilon,\rho)$. By definition of $C_2(\epsilon,\rho)$, we have $|T_1|\cdots|T_{k+l}|\leq C_2(\epsilon,\rho)$, and so by definition of $C(\epsilon,\rho)$, we conclude that $|T_{k+l+1}|\cdots|T_r|=\frac{|\Soc(H)|}{|T_1|\cdots|T_{k+l}|}\geq\frac{C(\epsilon,\rho)}{C_2(\epsilon,\rho)}=C_1(\epsilon,\rho)^{\frac{1}{\rho/2\cdot\epsilon}}$. It follows that \[\Lambda(H)\leq\prod_{i=1}^k{\Lambda(\Aut(T_i))}\cdot\prod_{i=k+1}^{k+l}{\Lambda(\Aut(T_i))}\cdot\prod_{i=k+l+1}^r{\Lambda(\Aut(T_i))}\leq\]\[\leq\prod_{i=1}^k{|T_i|^{\frac{1}{3}+\rho\epsilon}}\cdot C_1(\epsilon,\rho)\cdot\prod_{i=k+1}^{k+l}{|T_i|^{\frac{1}{3}+\rho\epsilon}}\cdot\prod_{i=k+l+1}^r{|T_i|^{\frac{1}{3}+\rho\epsilon}}\cdot(\prod_{i=k+l+1}^r{|T_i|})^{-\frac{\rho\epsilon}{2}}\leq\]\[\leq|\Soc(H)|^{\frac{1}{3}+\rho\epsilon}\cdot C_1(\epsilon,\rho)\cdot(C_1(\epsilon,\rho)^{\frac{1}{\rho/2\cdot\epsilon}})^{-\frac{\rho\epsilon}{2}}=|\Soc(H)|^{\frac{1}{3}+\rho\epsilon}.\]

\noindent For (2): This is analogous to the proof of (1).

\noindent For (3): We show the contraposition: Assume that $G$ is a finite group such that $[G:\Rad(G)]>\max(D(\epsilon,\rho),|G|^{\frac{2/3-\epsilon}{2/3-\rho\epsilon}})$. We need to show that $\Lambda(G)<|G|^{\frac{1}{3}+\epsilon}$. Note that by (1), we have $\Lambda(G/\Rad(G))\leq|\Soc(G/\Rad(G))|^{\frac{1}{3}+\rho\epsilon}\leq|G/\Rad(G)|^{\frac{1}{3}+\rho\epsilon}$. It follows that \[\Lambda(G)\leq\Lambda_{\aff}(\Rad(G))\cdot\Lambda(G/\Rad(G))\leq|\Rad(G)|\cdot|G/\Rad(G)|^{\frac{1}{3}+\rho\epsilon}=\]\[=|\Rad(G)|^{\frac{2}{3}-\rho\epsilon}\cdot|G|^{\frac{1}{3}+\rho\epsilon}<(|G|^{1-\frac{2/3-\epsilon}{2/3-\rho\epsilon}})^{\frac{2}{3}-\rho\epsilon}\cdot|G|^{\frac{1}{3}+\rho\epsilon}=|G|^{\epsilon-\rho\epsilon}\cdot|G|^{\frac{1}{3}+\rho\epsilon}=|G|^{\frac{1}{3}+\epsilon}.\]

\noindent For (4): This is analogous to the proof of (3).
\end{proof}

\begin{proof}[Proof of Theorem \ref{mainTheo2}]
For (1): Set $K(\epsilon,\xi):=D(\epsilon,\frac{2/3\xi}{\epsilon(1-3/2\epsilon+\xi)})$. Then by setting $\rho:=\frac{2/3\xi}{\epsilon(1-3/2\epsilon+\xi)}$ in Theorem \ref{auxiliaryTheo}(2), we find that $\Lambda(G)\geq|G|^{\frac{1}{3}+\epsilon}$ implies \[[G:\Rad(G)]\leq\max(D(\epsilon,\rho),|G|^{\frac{2/3-\epsilon}{2/3-\rho\epsilon}})=\max(K(\epsilon,\xi),|G|^{1-3/2\epsilon+\xi}).\]

\noindent For (2): This is analogous to (1).

\noindent For (3): Denote by $p_n$ the $n$-th prime number (starting with $p_0=2$) and set $G_n:=\PGL_2(p_{n+2})=\Aut(\PSL_2(p_{n+2}))$. That this choice of $G_n$ does the job follows from Lemma \ref{mainLem}, since $\log_{p(p^2-1)}(p+1)>\frac{1}{3}$ and $\log_{p(p^2-1)}(p(p+1))>\frac{2}{3}$ for all primes $p\geq 5$.
\end{proof}

\section{Proof of the main lemma}\label{sec4}

We now tackle the final task of proving the main lemma, Lemma \ref{mainLem}. So let $G=S^n$, where $S$ is a nonabelian finite simple group and $n\in\mathbb{N}^+$. We make a case-by-case analysis using the CFSG. In most cases, Lemma \ref{boundLem}(2) will be sufficient to this end, but some cases require sharper upper bounds.

\subsection{Case: \texorpdfstring{$S$}{S} is sporadic}\label{subsec4P1}

Using Lemma \ref{boundLem}(2) and the information on $|S|,\Out(S)$ and $\meo(S)$ for sporadic $S$ from \cite{CCNPW85a}, this case only consists in some routine checks.

\subsection{Case: \texorpdfstring{$S=\mathcal{A}_m,m\geq7$}{S=Am, m>=7}}\label{subsec4P2}

Note that $\Alt_5\cong\PSL_2(5)$ and $\Alt_6\cong\PSL_2(9)$ will be treated in the next case. We also use Lemma \ref{boundLem}(2) here. That is, we want to show that $g(n)\cdot\meo(\Sym_m^n)<(\frac{1}{2}m!)^{n/3}$ for all $n\in\mathbb{N}^+$ and all $m\geq 7$.

For $n=1$, this is the inequality $g(m)<(\frac{1}{2}m!)^{1/3}$ for $m\geq 7$. But $g(m)<3^{m/3}$, and one easily verifies $3^{m/3}<(\frac{1}{2}m!)^{1/3}$ for all $m\geq 7$.

For $n=2$, the inequality turns into $2\cdot\meo(\Sym_m^2)<(\frac{1}{2}m!)^{2/3}$. Now $\meo(\Sym_m^2)<g(m)^2$, and it is easy to verify $2\cdot g(m)^2<(\frac{1}{2}m!)^{2/3}$ for $m\geq 7$.

For $n=3$, the inequality to show is $3\cdot\meo(\Sym_m^3)<\frac{1}{2}m!$, and it is easy to verify the stronger $3\cdot g(m)^3<\frac{1}{2}m!$.

Finally, for $n\geq 4$, we use the bound $g(n)\cdot\meo(\Sym_m^n)<3^{n/3}\cdot\Psi(m)<3^{n/3}\cdot\e^{1.03883\cdot m}$, which reduces the inequality to $\e^{1.03883\cdot m}<(\frac{1}{6}m!)^{n/3}$ for $n\geq 4$ and $m\geq 7$, and this is easy to verify.

\subsection{Case: \texorpdfstring{$S=\mathrm{PSL}_2(q),q\geq 5$}{S=PSL2(q), q>=5}}\label{subsec4P3}

This is the most complicated case, requiring to investigate the five subcases $n=1,2,3,4$ and $n\geq 5$. Recall that $\Aut(\PSL_2(q))=\PGL_2(q)\rtimes\Gal(\mathbb{F}_q/\mathbb{F}_p)$, and in particular, there is a natural embedding $\PSL_2(q)\hookrightarrow\PGL_2(q)$.

\subsubsection{Subcase: \texorpdfstring{$n=1$}{n=1}}\label{subsubsec4P3P1}

Our goal is to show the following:

\begin{theoremmm}\label{pslTheo}
Let $q\geq 5$ be primary. Then:

\noindent (1) $\Lambda(\Aut(\PSL_2(q)))=q+1$.

\noindent (2) $\Lambda_{\aff}(\Aut(\PSL_2(q)))=\begin{cases}q(q+1), & \text{if }q\text{ is prime}, \\ q^2-1, & \text{if }q\text{ is even}, \\ \frac{1}{2}(q^2-1), & \text{if }q\text{ is odd and not prime.}\end{cases}$.
\end{theoremmm}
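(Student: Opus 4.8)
The starting point is the structural description $\Aut(\PSL_2(q)) = \PGL_2(q)\rtimes\Gal(\mathbb F_q/\mathbb F_p)$, together with the fact that, this group being semisimple, Corollary \ref{regularCor}(1) lets us replace the $\Lambda$- and $\Lambda_{\aff}$-questions by purely arithmetic ones: $\Lambda(\Aut(\PSL_2(q))) = \meo(\Aut(\PSL_2(q)))$ and $\Lambda_{\aff}(\Aut(\PSL_2(q))) = \meo(\Hol(\Aut(\PSL_2(q))))$. So the whole subsection reduces to computing maximum element orders in $\PGL_2(q)\rtimes\Gal$ and in its holomorph.

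For part (1), I would first recall the element-order structure of $\PGL_2(q)$: a semisimple element lies in a split torus (cyclic of order $q-1$) or a nonsplit torus (cyclic of order $q+1$), and unipotent elements have order $p$. Hence $\meo(\PGL_2(q)) = \max(q+1, q-1, p) = q+1$ for $q\ge 5$. Then I must check that adjoining field automorphisms cannot push the maximum element order above $q+1$. Writing $q = p^f$, an element of $\Aut(\PSL_2(q))$ is $g\varphi^i$ with $g\in\PGL_2(q)$ and $\varphi$ the Frobenius of order $f$; its order is a multiple of $\ord(\varphi^i) = f/\gcd(i,f)$, and on raising to that power one lands back in $\PGL_2(\mathbb F_{p^{\gcd(i,f)}})$ (the fixed field), so $\ord(g\varphi^i) \le \frac f{\gcd(i,f)}\cdot\meo(\PGL_2(p^{\gcd(i,f)})) = \frac f{d}(p^d+1)$ where $d=\gcd(i,f)$, $d\mid f$, $d<f$. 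A short monotonicity estimate shows $\frac fd(p^d+1) < p^f+1 = q+1$ whenever $d<f$ (worst case $d=f/2$), and a regular element of the nonsplit torus of $\PGL_2(q)$ realizes $q+1$. This gives (1).

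For part (2), by Corollary \ref{regularCor}(1,ii), $\Lambda_{\aff}(\Aut(\PSL_2(q))) = \meo(\Hol(A))$ where $A := \Aut(\PSL_2(q))$, and since $\Aut(A)\cong A$ by completeness (the automorphism group of a centerless CR-group is complete, as recalled in Subsection \ref{subsec2P2}), $\Hol(A) = A\rtimes A$ with the conjugation action. An element of $\Hol(A)$ is a pair $(x,\alpha)$ and by the shift formula $\ord(\A_{x,\alpha}) = \ord(\alpha)\cdot\ord(\sh_\alpha(x))$; equivalently one can use Lemma \ref{lcmLem}: modulo center issues, an element of order $\ord(r)$ conjugating an element of order $\ord(s)$ contributes $\lcm(\ord(s),\ord(r))$ when $A$ is centerless, and $A$ here is centerless. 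So I need the largest $\lcm(\ord(s),\ord(r))$ over $s,r\in A$ — but with the twist that $r$ and $s$ need not commute, so I actually want $\max_{x,\alpha}\ord(\alpha)\cdot\ord(\sh_\alpha(x))$. I would case-split on $q$. When $q$ is prime, $A = \PGL_2(p)$ (no field automorphisms), $\meo(A) = p+1$, and one can take $\alpha = \tau_r$ for $r$ a unipotent element (order $p$) and $x$ chosen so that $\sh_{\tau_r}(x)$ has order $p+1$ — e.g. via Lemma \ref{lcmLem}(2) with $\gcd(p, p+1)=1$ one gets order $p(p+1)$; an upper-bound argument (every element order in $A$ divides one of $p$, $p-1$, $p+1$, and the relevant lcm's are maximized by $p(p+1)$) shows this is optimal. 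When $q$ is even, there are no central obstructions and no "extra" prime $2$ available beyond the unipotent order $2$; the tori have orders $q\pm 1$, both odd, and $\gcd(q-1,q+1)=1$, so the largest achievable value is $(q-1)(q+1) = q^2-1$, again with a matching upper bound. When $q$ is odd but not prime, $\gcd(q-1,q+1)=2$, so $\lcm(q-1,q+1) = \frac12(q^2-1)$; one must check that bringing in unipotent elements or field automorphisms (of order $f>1$, with $\meo$ on the fixed subfield) cannot beat $\frac12(q^2-1)$ — this is the same monotonicity estimate as in part (1), now combined with the lcm bookkeeping.

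\textbf{Main obstacle.} The delicate point is the $q$ odd, non-prime subcase of (2): one has to rule out that a product of a field automorphism of order $f$ with a carefully chosen affine translation produces an element whose order exceeds $\frac12(q^2-1)$, i.e. that the "loss" of a factor $2$ from $\gcd(q-1,q+1)=2$ cannot be recovered by exploiting the Galois direction or unipotent elements. This requires a clean case analysis of $\ord(\alpha)\cdot\ord(\sh_\alpha(x))$ according to whether $\alpha$'s image in $\Out = \Gal$ is trivial, together with the estimate $\frac fd(p^d+1)(p^d-1) \le \frac12(p^f{}^2-1)$ for $d\mid f$, $d<f$ — which I expect to hold with room to spare but needs to be verified (the tightest instance again being $d = f/2$, where it reduces to something like $2(p^{f/2}+1)(p^{f/2}-1) = 2(q-1) \le \frac12(q^2-1)$, comfortably true for $q\ge 9$). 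The even and prime cases are comparatively mechanical once the tori orders and coprimality facts are in hand.
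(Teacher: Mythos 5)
Your overall reduction (regular cycles via Corollary \ref{regularCor}, hence $\Lambda_{\aff}(\Aut(\PSL_2(q)))=\meo(\Hol(\Aut(\PSL_2(q))))$, and the order formula $\ord(\A_{x,\alpha})=\ord(\alpha)\cdot\ord(\sh_{\alpha}(x))$) matches the paper, and your lower-bound constructions via Lemma \ref{lcmLem} are fine. The gap is in the upper bounds of part (2). Once you retreat from the lcm picture to \enquote{$\max_{x,\alpha}\ord(\alpha)\cdot\ord(\sh_{\alpha}(x))$}, you are maximizing a \emph{product} of two element orders of $\Aut(\PSL_2(q))$, and nothing in your argument excludes, say, $\ord(\alpha)=\ord(\sh_{\alpha}(x))=q+1$, which would give $(q+1)^2>q(q+1)$ in the prime case and $(q+1)^2>q^2-1$ in the even case; likewise in the odd non-prime case you must rule out the products $(q+1)^2$, $(q+1)(q-1)$ and $(q+1)\cdot\frac{q+1}{2}$, all of which exceed $\frac{1}{2}(q^2-1)$. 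This is where the bulk of the paper's proof lives: it excludes these combinations one by one using Lemma \ref{divisorLem} (the quantity $\LL_G(x,\alpha)$ must divide $|G|$, which kills e.g.\ $o_1=o_2=q+1$) together with Lemmata \ref{centralizerLem} and \ref{pslCentLem} (centralizer computations forcing $x\in\PGL_2(q)$, whence $\sh_{\alpha}(x)\in\PSL_2(q)$ by a coset-parity argument, contradicting $\ord(\sh_{\alpha}(x))\in\{q+1,q-1\}$ for odd $q$). Your proposal contains none of this machinery, and the step you flag as the \enquote{main obstacle} (the field-automorphism estimate in the odd non-prime case) is not where the difficulty actually sits.

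Ironically, the cleanest repair is the route you half-take and then abandon. Your worry that \enquote{$r$ and $s$ need not commute} is unfounded: Lemma \ref{lcmLem}(1) is stated for \emph{arbitrary} $r,s$ in a finite centerless group. Since $A:=\Aut(\PSL_2(q))$ is complete, every $\alpha\in\Aut(A)$ equals $\tau_r$ for some $r\in A$, and writing $x=sr^{-1}$ with $s:=xr$, that lemma gives $\ord(\A_{x,\alpha})=\lcm(\ord(s),\ord(r))$ for \emph{every} periodic affine map of $A$. Hence $\Lambda_{\aff}(A)=\max_{r,s\in A}\lcm(\ord(r),\ord(s))$, and the theorem reduces entirely to knowing the element orders of $A$ (the content of Lemma \ref{largestOrdLem}, which for the Galois coset does require the $e(q^{1/e}+1)$ estimate you sketch). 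If you commit to this, your three-way case analysis goes through and is arguably tidier than the paper's combination-by-combination exclusion; as written, it does not. A minor point on part (1): your claimed strict inequality $\frac{f}{d}(p^d+1)<q+1$ for all $d\mid f$, $d<f$ fails for $q=8$, $d=1$, where one gets exactly $9=q+1$; this does not affect the conclusion, but the \enquote{worst case $d=f/2$} heuristic is not quite right either.
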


By Corollary \ref{regularCor}(2,i), $\Lambda(\Aut(\PSL_2(q)))=\meo(\Aut(\PSL_2(q)))$, which was already determined by Guest, Morris, Praeger and Spiga in \cite{GMPS15a}, see Table 3 there. The following lemma is an extract from the proof of \cite[Theorem 2.16]{GMPS15a}:

\begin{lemmmma}\label{gmpsLem}
Let $q\geq 5$ be primary, with prime base $p$.

\noindent (1) Denote by $\pi:\Aut(\PSL_2(q))=\PGL_2(q)\rtimes\Gal(\mathbb{F}_q/\mathbb{F}_p)\rightarrow\Gal(\mathbb{F}_q/\mathbb{F}_p)$ the canonical projection. Let $\alpha\in\Aut(\PSL_2(q))$ such that $\ord(\pi(\alpha))=e$. Then $\ord(\alpha)\leq e\cdot (q^{1/e}+1)$.

\noindent (2) $\mao(\PSL_2(q))=q+1$.\qed
\end{lemmmma}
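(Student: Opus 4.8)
The plan is to derive part~(2) from part~(1), so essentially all of the work is part~(1). For part~(1), write $\alpha=g\phi^i$ with $g\in\PGL_2(q)$ and $\phi^i:=\pi(\alpha)$, so that $\ord(\phi^i)=e$ and the fixed field of $\phi^i$ in $\mathbb{F}_q$ is $\mathbb{F}_{q_0}$ with $q_0:=q^{1/e}$. Since $\PGL_2(q)=\ker\pi$ and $\pi$ maps $\langle\alpha\rangle$ onto $\langle\phi^i\rangle$ (of order $e$), we get $\langle\alpha\rangle\cap\PGL_2(q)=\langle\alpha^e\rangle$, hence $\ord(\alpha)=e\cdot\ord(\alpha^e)$, so the claim is equivalent to $\ord(\alpha^e)\le q_0+1$. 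Computing in $\PGL_2(q)$ (on which $\phi^i$ acts as a field automorphism of order $e$), one has $\alpha^e=\sh_{\phi^i}(g)=:h\in\PGL_2(q)$, and Lemma~\ref{shiftLem}(1), applied to the automorphism $\phi^i$ of $\PGL_2(q)$, gives $\phi^i(h)=ghg^{-1}$; in particular $h$ and $\phi^i(h)$ are $\PGL_2(q)$-conjugate.

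From here I would bound $\ord(h)$ by a case analysis on the conjugacy class of $h$. If $h$ is not semisimple, then, lifting to $\GL_2(q)$ and reading off the Jordan form, $\ord(h)=p\le p^{f/e}=q_0$. If $h$ is semisimple, lift it to $\tilde h\in\GL_2(q)$ with eigenvalues $\mu,\nu\in\overline{\mathbb{F}}_p$ and put $\zeta:=\mu/\nu$, so $\ord(h)=\ord(\zeta)$; since $\tilde h$ is $\mathbb{F}_q$-rational, either $\mu,\nu\in\mathbb{F}_q$ and $\zeta^{q-1}=1$ (split case), or $\{\mu,\nu\}\subseteq\mathbb{F}_{q^2}\setminus\mathbb{F}_q$ is a Galois-conjugate pair and $\zeta^{q+1}=1$ (non-split case). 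The conjugacy $\phi^i(h)\sim h$ makes the eigenvalue multisets of $\phi^i(\tilde h)$ and $\tilde h$ coincide up to a common scalar, which forces $\zeta^{p^i}\in\{\zeta,\zeta^{-1}\}$, i.e.\ $\zeta^{p^i-1}=1$ or $\zeta^{p^i+1}=1$. Combining this with the relation $\zeta^{q-1}=1$ resp.\ $\zeta^{q+1}=1$ and the standard evaluations of $\gcd(p^i\pm1,\,p^f\pm1)$ — each of which is either at most $2$ or equal to $p^{\gcd(i,f)}+1=q_0+1$ — yields $\ord(\zeta)\le q_0+1$ in every case (using $q_0\ge p\ge 2$). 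I expect this elementary but case-heavy number-theoretic bookkeeping (split versus non-split torus, and the two signs) to be the most tedious step; nothing in it is deep. Alternatively, a Lang--Steinberg / descent argument shows $\alpha$ to be $\PGL_2(q)$-conjugate to some $g_0\phi^i$ with $g_0$ in $\C_{\PGL_2(q)}(\phi^i)\cong\PGL_2(q_0)$, so that $\alpha^e\sim g_0^e$ and $\ord(\alpha^e)\le\meo(\PGL_2(q_0))=q_0+1$.

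For part~(2): the lower bound $\mao(\PSL_2(q))\ge q+1$ is clear because $\PGL_2(q)\le\Aut(\PSL_2(q))$ contains a cyclic non-split torus of order $q+1$. For the upper bound, given $\alpha\in\Aut(\PSL_2(q))$ put $e:=\ord(\pi(\alpha))$, a divisor of $f:=\log_p q$; by part~(1), $\ord(\alpha)\le e(q^{1/e}+1)$. It then suffices to check, for $q\ge 5$, that $\max_{e\mid f}e(q^{1/e}+1)=q+1$, attained at $e=1$: setting $x:=q^{1/e}=p^{f/e}\ge 2$, the inequality $e(x+1)\le x^e+1=q+1$ holds for every $e\ge 2$ by elementary estimates (e.g.\ $x^3-3x-2=(x-2)(x+1)^2\ge 0$ for $e=3$, and $x^e\ge 2^{e-1}x\ge ex$ with room to spare for $e\ge 4$), the case $q=4$, $e=2$ being ruled out by $q\ge 5$ and the case $q=8$, $e=3$ merely tying the bound. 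Hence $\mao(\PSL_2(q))=q+1$.
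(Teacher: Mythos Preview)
Your argument is correct. Note, however, that the paper does not prove this lemma itself: it is quoted from \cite{GMPS15a} (the lemma ends with \qed\ and the paper merely records that in the source, part~(1) is established via Lang--Steinberg maps and part~(2) is deduced from part~(1)). Your derivation of~(2) from~(1) thus matches the cited approach exactly, and your \emph{alternative} route for~(1) via Lang--Steinberg descent into $\PGL_2(q_0)$ is precisely what the paper attributes to \cite{GMPS15a}. What is genuinely different is your \emph{primary} argument for~(1): instead of invoking Lang--Steinberg, you work directly with the eigenvalue ratio $\zeta=\mu/\nu$ of a semisimple lift of $h=\alpha^e$, extract from $\phi^i(h)\sim h$ the relation $\zeta^{p^i\pm1}=1$, and combine it with $\zeta^{q\mp1}=1$ using the standard $\gcd(p^a\pm1,p^b\pm1)$ evaluations to force $\ord(\zeta)\le q_0+1$. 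This buys a fully elementary, self-contained proof that avoids algebraic-group machinery, at the cost of the four-case $\gcd$ bookkeeping you flag as tedious (it is); the Lang--Steinberg route is conceptually cleaner and generalises more readily to higher rank, but requires that background. Either way the conclusion and the overall architecture (reduce to bounding $\ord(\alpha^e)$, then optimise $e(q^{1/e}+1)$ over $e\mid f$) are the same.
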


They proved point (2) using point (1) (whose proof used Lang-Steinberg maps). Since point (1) of Theorem \ref{pslTheo} is now clear, let us outline the strategy for proving point (2): By Theorem \ref{regularTheo}, we know that the largest cycle length of any periodic affine map $\A_{x,\alpha}$ of $\Aut(\PSL_2(q))$ coincides with its order, which is the product $\ord(\alpha)\cdot\ord(\sh_{\alpha}(x))$. By completeness of $\Aut(\PSL_2(q))$, we know that $\ord(\alpha)$ is an element order in $\Aut(\PSL_2(q))$, so the order of any periodic affine map of $\Aut(\PSL_2(q))$ is the product of two automorphism orders of $\PSL_2(q)$. If we know a list of the first few largest automorphism orders of $\PSL_2(q)$ which is long enough to ensure that for any periodic affine map whose order exceeds the asserted $\Lambda_{\aff}$-value, the two factor orders must be in the list, we can systematically go through the possible combinations, deriving a contradiction in each case using Lemmata \ref{divisorLem} and \ref{centralizerLem}. It will then remain to show that the asserted $\Lambda_{\aff}$-value is indeed the $\Lambda$-value of some periodic affine map of $\Aut(\PSL_2(q))$, which can be done by Lemma \ref{lcmLem}.

We can indeed extend the list of largest automorphism orders of $\PSL_2(q)$ to our needs in a way similar to how Guest, Morris, Praeger and Spiga derived point (2) of Lemma \ref{gmpsLem} from point (1):

\begin{lemmmma}\label{largestOrdLem}
(1) Let $q=2^f$ with $f\geq 3$. The two largest automorphism orders of $\PSL_2(q)$ are $q+1$ and $q-1$.

\noindent (2) Let $q=p^f\geq 5$ with $p$ an odd prime and $f\geq 1$.

(i) If $f=1$, then the five largest automorphism orders of $\PSL_2(q)$ are $q+1,q,q-1,\frac{q+1}{2},\frac{q-1}{2}$.

(ii) If $f\geq 2$ and $(p,f)\not=(3,2)$, then the four largest automorphism orders of $\PSL_2(q)$ are $q+1,q-1,\frac{q+1}{2},\frac{q-1}{2}$. Also, $\ord(\alpha)\leq\frac{q-1}{2}$ for any $\alpha\in\Aut(\PSL_2(q))\setminus\PGL_2(q)$, where the inequality is strict for $q\not=25$.

(iii) The four largest automorphism orders of $\PSL_2(9)\cong\Alt_6$ are $10,8,6,5$.
\end{lemmmma}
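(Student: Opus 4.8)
The plan is to exploit the structure $\Aut(\PSL_2(q))=\PGL_2(q)\rtimes\Gal(\mathbb{F}_q/\mathbb{F}_p)$, with the Galois group cyclic of order $f$ (writing $q=p^f$), and to treat separately the automorphisms contained in $\PGL_2(q)$ and those whose image under the canonical projection $\pi:\Aut(\PSL_2(q))\to\Gal(\mathbb{F}_q/\mathbb{F}_p)$ has order $e\geq2$. Throughout I use the classical facts about $\PGL_2(q)$: its Sylow $p$-subgroup is elementary abelian (of order $q$, hence of exponent $p$), it contains cyclic subgroups of orders $q-1$ and $q+1$, and consequently its element orders are precisely the divisors of $p$, of $q-1$ and of $q+1$; moreover, for $q$ odd, $\PSL_2(q)\leq\PGL_2(q)$ contributes elements of orders $\tfrac{q+1}{2}$ and $\tfrac{q-1}{2}$.

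For the automorphisms lying in $\PGL_2(q)$ one just sorts these divisors, the only point being that no element order slips strictly between two consecutive entries of the claimed lists. This is elementary: since $q-1$ and $q+1$ differ by $2$, any proper divisor of $q\mp1$ is at most $\tfrac12(q\mp1)$; $q$ divides neither $q-1$ nor $q+1$; $p\leq\tfrac{q-1}{2}$ whenever $f\geq2$; and $p=q$ when $q$ is prime. Hence the element orders of $\PGL_2(q)$ in the range $(\tfrac{q-1}{2},q+1]$ are exactly $q+1,q-1,\tfrac{q+1}{2}$, together with $q$ in the prime case, giving the chains $q+1>q>q-1>\tfrac{q+1}{2}>\tfrac{q-1}{2}$ for $q=p$ prime, $q+1>q-1>\tfrac{q+1}{2}>\tfrac{q-1}{2}$ for $q=p^f$ odd with $f\geq2$, and $q+1>q-1$ (with next order $2$) for $q$ even, with no further $\PGL_2(q)$-orders interspersed; each listed value is realized. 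When $f=1$ one has $\Aut(\PSL_2(q))=\PGL_2(q)$, which already proves part (2)(i) in full.

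For an automorphism $\alpha$ with $e:=\ord(\pi(\alpha))\geq2$ I invoke Lemma \ref{gmpsLem}(1), giving $\ord(\alpha)\leq e(q^{1/e}+1)$. A short computation shows $e\mapsto e(q^{1/e}+1)$ is convex on $[2,f]$ (using $f<2\log q$), so over the divisors $e\geq2$ of $f$ its maximum is attained either at the least prime divisor $p_0$ of $f$ or at $e=f$; that is, $\ord(\alpha)\leq\max\{p_0(q^{1/p_0}+1),\,f(p+1)\}$. For $q=2^f$ with $f\geq4$ this maximum is $<q-1$, so such $\alpha$ cannot interfere with the top two orders, settling part (1) for $f\geq4$. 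For $q=p^f$ odd with $f\geq2$ and $(p,f)\neq(3,2)$, an elementary check of the two candidate values shows $\ord(\alpha)\leq\tfrac{q-1}{2}$, with equality occurring precisely for $q=25$ (there $p_0(q^{1/p_0}+1)=2(5+1)=12=\tfrac{q-1}{2}$, and the bound is attained, e.g.\ by a Lang--Steinberg argument as in \cite{GMPS15a}); this yields part (2)(ii), including the auxiliary bound on $\Aut(\PSL_2(q))\setminus\PGL_2(q)$ and the exact point where strictness fails.

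Two small cases remain. For $q=2^3=8$ the bound above only gives $\ord(\alpha)\leq3\cdot3=9=q+1$, but since $f=3$ is odd, $\Gal(\mathbb{F}_8/\mathbb{F}_2)$ has trivial Sylow $2$-subgroup, so the Sylow $2$-subgroup of $\Aut(\PSL_2(8))$ coincides with that of $\PGL_2(8)=\PSL_2(8)$ and is elementary abelian; hence there is no element of order $8$, and the two largest automorphism orders are $9=q+1$ and $7=q-1$, which proves part (1) for $f=3$. The genuinely exceptional case $(p,f)=(3,2)$, i.e.\ $q=9$, is part (2)(iii): using $\PSL_2(9)\cong\Alt_6$ and $\Aut(\PSL_2(9))\cong\Aut(\Alt_6)=\mathrm{P}\Gamma\mathrm{L}_2(9)$, a group of order $1440$, one reads off (from its subgroup structure, e.g.\ via \cite{CCNPW85a}, or directly with GAP) that its element orders are $1,2,3,4,5,6,8,10$, so the four largest are $10,8,6,5$. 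I expect the routine-looking but error-prone bookkeeping --- verifying in every case that no element order lies strictly between two consecutive entries of the claimed lists, and pinning down $q=25$ as the unique place where the field-automorphism bound meets $\tfrac{q-1}{2}$ --- to be the actual work here, rather than any isolated deep step.
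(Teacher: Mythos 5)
Your proposal is correct and follows the same skeleton as the paper's proof: sort the element orders of $\PGL_2(q)$ directly from the known element structure, and control automorphisms outside $\PGL_2(q)$ via the bound $\ord(\alpha)\leq e(q^{1/e}+1)$ from Lemma \ref{gmpsLem}(1). The details diverge in two places, both legitimately. First, to verify $e(q^{1/e}+1)\leq\frac{q-1}{2}$ you use convexity of $e\mapsto e(q^{1/e}+1)$ (which does hold, with second derivative $q^{1/e}(\log q)^2/e^3>0$, no hypothesis on $f$ needed) to reduce to the two extreme divisors $p_0$ and $f$; the paper instead proves the uniform estimate $\frac{4}{3}eq^{1/e}\leq\frac{13}{27}q$, equivalent to $q\geq(\frac{36}{13}e)^{1+\frac{1}{e-1}}$, checked for $e=2$ and $e\geq3$ separately. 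Your route isolates $q=25$ as the unique equality case more transparently; the paper's avoids any appeal to calculus. Second, for part (1) the paper rules out an automorphism of order $q=2^f$ uniformly for all $f\geq3$ by writing $\ord(\alpha)=\ord(\pi(\alpha))\cdot\ord(\alpha^{\ord(\pi(\alpha))})$ and noting that the only $2$-power element orders in $\PGL_2(2^f)$ are $1$ and $2$, forcing $2^{f-1}\mid f$; your split into $f\geq4$ (numerical bound $<q-1$) plus $f=3$ (Sylow $2$-subgroup of $\Aut(\PSL_2(8))$ lies in $\PSL_2(8)$ and is elementary abelian) is correct but less uniform. One small note: the lemma only asserts that the bound $\frac{q-1}{2}$ can fail to be strict at $q=25$, so your remark about attainment there is not needed for the statement (the paper defers the attainment check to a GAP remark after the proof); everything else, including the $q=9$ case read off from $\mathrm{P}\Gamma\mathrm{L}_2(9)$, matches the paper.
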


For those parts of the argument where we will use Lemma \ref{centralizerLem}, we will need some statements about centralizers in $\Aut(\PSL_2(q))$ for odd $q$:

\begin{lemmmma}\label{pslCentLem}
(1) Let $p\geq 5$ be prime, and let $\alpha\in\Aut(\PSL_2(p))=\PGL_2(p)$ be of order $p$. Then $\C_{\Aut(\PSL_2(p))}(\alpha)=\langle\alpha\rangle\subseteq\PSL_2(p)$.

\noindent (2) Let $q\geq 5$ be odd, primary, $q\notin\{9,25\}$, and let $\alpha\in\Aut(\PSL_2(q))$ be of order $\frac{q-1}{2}$. Then $\C_{\Aut(\PSL_2(q))}(\alpha)\subseteq\PGL_2(q)$.

\noindent (3) Let $q\geq 5$ be odd, primary, and let $\alpha\in\Aut(\PSL_2(q))$ be of order $q-1$. Then $\C_{\Aut(\PSL_2(q))}(\alpha)\subseteq\PGL_2(q)$.

\noindent (4) Let $q\geq 5$ be odd, primary, and let $\alpha\in\Aut(\PSL_2(q))$ be of order $\frac{q+1}{2}$. Then $\C_{\Aut(\PSL_2(q))}(\alpha)\subseteq\PGL_2(q)$.

\noindent (5) Let $q\geq 5$ be odd, primary, and let $\alpha\in\Aut(\PSL_2(q))$ be of order $q+1$. Then $\C_{\Aut(\PSL_2(q))}(\alpha)\subseteq\PGL_2(q)$.
\end{lemmmma}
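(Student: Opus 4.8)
The plan is to treat part (1) by a direct matrix computation and parts (2)--(5) by a single mechanism: an element of $\PGL_2(q)$ of any of the prescribed orders is \emph{regular}, so its centralizer in $\PGL_2(q)$ is a known maximal torus, and the only way the centralizer could grow inside $\Aut(\PSL_2(q))$ is through a field automorphism, which I would rule out by a conjugacy-of-powers argument. For (1), since $q=p$ is prime we have $\Aut(\PSL_2(p))=\PGL_2(p)$, and $\alpha$ of order $p$ is unipotent, hence $\PGL_2(p)$-conjugate to the image of $\begin{pmatrix}1&1\\0&1\end{pmatrix}\in\operatorname{SL}_2(p)$; as $\PSL_2(p)\unlhd\PGL_2(p)$ this already forces $\alpha\in\PSL_2(p)$. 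A one-line computation gives $\C_{\GL_2(p)}(\begin{pmatrix}1&1\\0&1\end{pmatrix})=\{\begin{pmatrix}a&b\\0&a\end{pmatrix}: a\in\mathbb{F}_p^\times,b\in\mathbb{F}_p\}$, of order $p(p-1)$; since this contains the scalars, $\C_{\PGL_2(p)}(\alpha)$ has order $p$ and hence equals $\langle\alpha\rangle$.

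For (2)--(5): when $q$ is prime there is nothing to prove, so let $q=p^f$ with $f\geq 2$ and $p$ odd, and write $\Aut(\PSL_2(q))=\PGL_2(q)\rtimes\Gamma$ with $\Gamma=\Gal(\mathbb{F}_q/\mathbb{F}_p)$ cyclic of order $f$, generated by the Frobenius $\phi$. First I would use Lemma \ref{gmpsLem}(1) to check that, away from a few small $q$, any element whose order is one of $q\pm 1,\frac{q\pm 1}{2}$ must lie in $\PGL_2(q)$ (an element with nontrivial $\Gamma$-part of order $e$ has order at most $e(q^{1/e}+1)$, which is below these values once $q$ is large enough), so we may assume $\alpha\in\PGL_2(q)$. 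Since $\ord(\alpha)\geq 3$, $\alpha$ is regular semisimple, so $\C_{\PGL_2(q)}(\alpha)$ is the unique maximal torus $T$ containing $\alpha$ --- the split torus (order $q-1$) in (2),(3) and the non-split torus (order $q+1$) in (4),(5). Because $\PGL_2(q)\unlhd\Aut(\PSL_2(q))$, the assertion $\C_{\Aut(\PSL_2(q))}(\alpha)\subseteq\PGL_2(q)$ is invariant under $\PGL_2(q)$-conjugation of $\alpha$, so I may take $T$ to be a $\Gamma$-stable standard torus, on which $\phi$ acts by $t\mapsto t^p$. Now $\C_{\Aut(\PSL_2(q))}(\alpha)\cap\PGL_2(q)=T$; if the centralizer were strictly larger it would contain some $g\phi^i$ with $g\in\PGL_2(q)$ and $0<i<f$, and from $g\phi^i(\alpha)(g\phi^i)^{-1}=\alpha$ we get $g\alpha^{p^i}g^{-1}=\alpha$, i.e. $\alpha^{p^i}$ and $\alpha$ are $\PGL_2(q)$-conjugate. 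Since conjugacy of elements inside a torus is governed by the Weyl group (inversion), this forces $p^i\equiv\pm 1\pmod{\ord(\alpha)}$. An elementary size estimate using $1<p^i\leq p^{f-1}$ excludes this outright for $\ord(\alpha)\in\{q-1,\,q+1,\,\frac{q+1}{2}\}$, and for $\ord(\alpha)=\frac{q-1}{2}$ it leaves only $q=9$ (where $3\equiv-1\bmod 4$) --- which is precisely why $q=9$ is barred in part (2).

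The remaining, and genuinely delicate, part is the handful of small exceptional groups. For $q=25$ in part (2), Lemma \ref{gmpsLem}(1) does permit an element of order $\frac{q-1}{2}=12$ lying outside $\PGL_2(25)$, so the reduction "$\alpha\in\PGL_2(q)$" can fail there --- which accounts for the exclusion of $25$. Likewise, at the bottom of the ranges in (3)--(5), $q=9$ is special: $\PSL_2(9)\cong\Alt_6$ carries the exceptional outer automorphism, so $\Aut(\PSL_2(9))=\mathrm{P}\Gamma\mathrm{L}_2(9)$ is larger than a naive $\PGL_2\rtimes\Gamma$ picture would suggest and contains, e.g. inside $\M_{10}$, elements of order $q-1=8$ outside $\PGL_2(9)$. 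These finitely many groups I would handle by inspecting $\Aut(\PSL_2(q))$ directly (with the help of GAP \cite{GAP4} if convenient) and checking case by case that the centralizer of an element of the prescribed order meets no nontrivial $\Gamma$-coset. I expect this small-case bookkeeping --- and pinning down exactly which $q$ must be excluded from which part --- to be the main obstacle; once the torus structure of $\PGL_2(q)$ and the Frobenius action on it are in place, the generic argument above is routine.
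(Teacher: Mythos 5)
Your proposal is correct in substance and reaches the same arithmetic core as the paper --- namely that a centralizing element with nontrivial field part of order $e$ would force $\alpha^{p^e}$ to be conjugate to $\alpha$ inside its torus, hence $p^e\equiv\pm1\pmod{\ord(\alpha)}$, which an elementary size estimate refutes (except for $q=9$, $\ord(\alpha)=4$, exactly matching the exclusion in part (2)) --- but the execution is genuinely different. The paper works entirely by hand: it conjugates $\alpha$ to an explicit diagonal (or unipotent) representative, writes out the commutation condition as a matrix identity up to a scalar $\lambda$, and compares entries to land on the divisibility relations $\frac{p^f-1}{2}\mid p^e\mp1$, etc.; for parts (4) and (5) it diagonalizes the nonsplit element by embedding $\Aut(\PSL_2(q))$ into $\Aut(\PSL_2(q^2))$ and then intersects back down. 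You replace all of this with the structural statement that $\C_{\PGL_2(q)}(\alpha)$ is a maximal torus and that conjugacy within a torus is controlled by the Weyl group, and you treat the nonsplit torus intrinsically rather than after base change. Your route is shorter and more conceptual; the paper's is self-contained at the level of $2\times2$ matrices. Both proofs make the same initial reduction ($\alpha\in\PGL_2(q)$ via the order bound of Lemma \ref{largestOrdLem}(2,ii) / Lemma \ref{gmpsLem}(1)) and both fall back on direct inspection for the small exceptional $q$; your diagnosis of \emph{why} $q=9$ and $q=25$ are dangerous (the $\M_{10}$-elements of order $8$, the order-$12$ automorphisms of $\PSL_2(25)$ outside $\PGL_2(25)$) agrees with the paper's remarks.

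Two points should be tightened. First, in part (1) and again in the claim ``$\C_{\PGL_2(q)}(\alpha)=T$'': computing $\C_{\GL_2(p)}$ and passing to the quotient only bounds the centralizer from \emph{below}; an element $\pi_0(B)$ centralizes $\pi_0(A)$ iff $BAB^{-1}=\lambda A$ for some scalar $\lambda$, so you must rule out $\lambda\neq1$ (for the unipotent case a trace or determinant comparison does it instantly; the paper carries this $\lambda$ explicitly). Likewise ``regular semisimple'' alone does not give $\C_{\PGL_2}(\alpha)=T$ --- in $\PGL_2$ the Weyl involution centralizes the order-$2$ torus elements --- so you need your stated hypothesis $\ord(\alpha)\geq3$ together with the fact that the Weyl element acts by inversion. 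Second, the Frobenius acts on a $\Gamma$-stable nonsplit torus by $t\mapsto t^{\pm p}$ (the sign depends on which extension of $\Frob^e$ to $\mathbb{F}_{q^2}$ the chosen model realizes); this is harmless since the Weyl group already contributes the $\pm$, but it should be said.
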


Before proving Lemmata \ref{largestOrdLem} and \ref{pslCentLem}, for the readers' convenience, we quickly recall some basic facts on the element structure of $\PGL_2(q)$ for primary $q$ with prime base $p$. We denote by $\pi_0:\GL_2(q)\rightarrow\PGL_2(q)$ and $\pi_1:\GL_2(q^2)\rightarrow\PGL_2(q^2)$ the canonical projections.

\begin{enumerate}
\item Every element order in $\PGL_2(q)$ is a divisor of one of the following: $p,q+1,q-1$.

\item Every element in $\PGL_2(q)$ of order $p$ is conjugate in $\PGL_2(q)$ to an element of the form $\pi_0(\begin{pmatrix}1 & x \\ 0 & 1\end{pmatrix})$ with $x\in\mathbb{F}_q^{\ast}$. These elements are also in $\PSL_2(q)$.

\item Every element in $\PGL_2(q)$ of order a divisor of $q-1$ is conjugate in $\PGL_2(q)$ to an element of the form $\pi_0(\begin{pmatrix}1 & 0 \\ 0 & a\end{pmatrix})$ with $a\in\mathbb{F}_q^{\ast}$. Clearly, the order of such an element in $\PGL_2(q)$ equals the order of $a\in\mathbb{F}_q^{\ast}$, so all divisors of $q-1$ occur as element orders. Furthermore, such an element is in $\PSL_2(q)$ if and only if $a$ is a square in $\mathbb{F}_q$, whence for even $q$, all these elements are also in $\PSL_2(q)$, and for odd $q$, precisely those whose order is a divisor of $\frac{q+1}{2}$ are in $\PSL_2(q)$.

\item Every element in $\PGL_2(q)$ of order a divisor of $q+1$, but not of $q-1$, is conjugate in $\PGL_2(q^2)$ to an element of the form $\pi_1(\begin{pmatrix}1 & 0 \\ 0 & a\end{pmatrix})$ with $a\in\mathbb{F}_{q^2}\setminus\mathbb{F}_q$. As before, all such divisors of $q+1$ occur as element orders, and among such elements, precisely those where $a$ is a square in $\mathbb{F}_{q^2}$ are in $\PSL_2(q)$, so again, for even $q$, all such elements are also in $\PSL_2(q)$, and for odd $q$, precisely those whose order is a divisor of $\frac{q+1}{2}$ are also in $\PSL_2(q)$.
\end{enumerate}

\begin{proof}[Proof of Lemma \ref{largestOrdLem}]
Denote by $\pi:\Aut(\PSL_2(q))\rightarrow\Gal(\mathbb{F}_q/\mathbb{F}_p)$ the canonical projection. 

\noindent For (1): That $q+1$ is the largest automorphism order is just a special case of Lemma \ref{gmpsLem}(2), and $q-1$ is an automorphism order by the above facts on the element structure of $\PGL_2(q)$. It remains to show that $q=2^f$ is not an automorphism order, which goes as follows: If $\alpha\in\Aut(\PSL_2(q))$ had order $2^f$, then $2^f=\ord(\alpha)=\ord(\pi(\alpha))\cdot\ord(\alpha^{\ord(\pi(\alpha))})$. Now by the element structure, the only element orders in $\PGL_2(q)$ which are powers of $2$ are $1$ and $2$, and so $\ord(\alpha^{\ord(\pi(\alpha))})\leq 2$, and thus $\ord(\pi(\alpha))\geq 2^{f-1}$. But $\ord(\pi(\alpha))\mid|\Gal(\mathbb{F}_q/\mathbb{F}_2)|=f$, a contradiction.

\noindent For (2,i): Since $\Aut(\PSL_2(q))=\PGL_2(q)$ if $q$ is prime, the statement follows from the element structure of $\PGL_2(q)$.

\noindent For (2,ii): Again, by the element structure of $\PGL_2(q)$, the four listed numbers are certainly the four largest element orders in $\PGL_2(q)$, so it suffices to prove the second part of the claim. Let $\alpha\in\Aut(\PSL_2(q))\setminus\PGL_2(q)$, so that $e:=\ord(\pi(\alpha))>1$. We need to show that $\ord(\alpha)\leq\frac{q-1}{2}$, and actually $\ord(\alpha)<\frac{q-1}{2}$ unless $q=25$. By Lemma \ref{gmpsLem}(1), it is sufficient to show that $e(q^{1/e}+1)<\frac{q-1}{2}$ for $q\not=25$ (and to check that for $q=25$, where $e=2$, the left-hand side is equal to the right-hand side). For $q\not=25$ (i.e., $q\geq 27$), note that it suffices to show \begin{equation}\label{Eq1}\frac{4}{3}eq^{1/e}\leq\frac{13}{27}q,\end{equation} since \[e(q^{1/e}+1)=eq^{1/e}(1+\frac{1}{q^{1/e}})\leq\frac{4}{3}eq^{1/e},\] and \[\frac{q-1}{2}=q(\frac{1}{2}-\frac{1}{2q})\geq q(\frac{1}{2}-\frac{1}{54})=\frac{13}{27}q.\] (\ref{Eq1}) is equivalent to \[q\geq(\frac{36}{13}e)^{1+\frac{1}{e-1}},\] which is easy to verify in the case distinction $e=2$ (where $q\geq 49$) versus $e\geq 3$ (using that $q\geq 3^e$).

\noindent For (2,iii): This is readily checked with GAP \cite{GAP4}.
\end{proof}

We remark that, as is easy to check with GAP \cite{GAP4}, $\PSL_2(25)$ actually has automorphisms of order $12=\frac{25-1}{2}$ that are not in $\PGL_2(25)$.

\begin{proof}[Proof of Lemma \ref{pslCentLem}]
For (1): By the element structure of $\PGL_2(p)$, we have $\alpha\in\PSL_2(p)$, and $\alpha$ is conjugate in $\PGL_2(p)$ to an element of the form $\pi_0(\begin{pmatrix}1 & x \\ 0 & 1\end{pmatrix})$ for some $x\in\mathbb{F}_p^{\ast}$, so it suffices to prove the assertion for all such elements. However, since they are powers of one another, it actually suffices to show the assertion for $\alpha=\pi_0(\begin{pmatrix}1 & 1 \\ 0 & 1\end{pmatrix})$. So let $\begin{pmatrix}a & b \\ c & d\end{pmatrix}\in\GL_2(p)$ such that \begin{equation}\label{Eq2}\pi_0(\begin{pmatrix}1 & 1 \\ 0 & 1\end{pmatrix}\cdot\begin{pmatrix}a & b \\ c & d\end{pmatrix}\cdot\begin{pmatrix}1 & -1 \\ 0 & 1\end{pmatrix})=\pi_0(\begin{pmatrix}a & b \\ c & d\end{pmatrix}).\end{equation} (\ref{Eq2}) is equivalent to the existence of some $\lambda\in\mathbb{F}_p^{\ast}$ such that \begin{equation}\label{Eq3}\begin{pmatrix}a+c & b+d-a-c \\ c & d-c\end{pmatrix}=\lambda\cdot\begin{pmatrix}a & b \\ c & d\end{pmatrix}.\end{equation} If $\lambda\not=1$, then a comparison of the bottom left entries in (\ref{Eq3}) implies $c=0$ and thus also $a=0$, a contradiction. So $\lambda=1$, turning (\ref{Eq3}) into a system of linear equations over $\mathbb{F}_p$ which one checks to be equivalent to $c=0,a=d$. It follows that \[\pi_0(\begin{pmatrix}a & b \\ c & d\end{pmatrix})=\pi_0(\begin{pmatrix}a & b \\ 0 & a\end{pmatrix})=\pi_0(\begin{pmatrix}1 & b/a \\ 0 & 1\end{pmatrix})\in\langle\alpha\rangle.\]

\noindent For (2): Note that by Lemma \ref{largestOrdLem}(2,ii), $\alpha$ is an element of $\PGL_2(q)$, and so by the element structure of $\PGL_2(q)$, $\alpha$ is conjugate in $\PGL_2(q)$ to an element of the form $\pi_0(\begin{pmatrix}1 & 0 \\ 0 & x\end{pmatrix})$ with $x\in\mathbb{F}_q^{\ast}$ of order $\frac{q-1}{2}$ (i.e., $x$ generates the subgroup of squares in $\mathbb{F}_q^{\ast}$); it suffices to show that the centralizers in $\Aut(\PSL_2(q))$ of such elements are contained in $\PGL_2(q)$. We do so by contradiction: Assume that for some nontrivial field automorphism $\sigma=\Frob^e$ of $\mathbb{F}_q$, where $\Frob$ denotes the Frobenius automorphism of $\mathbb{F}_q$ and $1\leq e<f$, and for some $A=\begin{pmatrix}a & b \\ c & d\end{pmatrix}\in\GL_2(q)$, we have \begin{equation}\label{Eq4}\pi_0(A\sigma\cdot\begin{pmatrix}1 & 0 \\ 0 & x\end{pmatrix}\cdot\sigma^{-1}A^{-1})=\pi_0(\begin{pmatrix}1 & 0 \\ 0 & x\end{pmatrix}).\end{equation} Easy computations reveal that (\ref{Eq4}) is equivalent to the existence of some $\lambda\in\mathbb{F}_q^{\ast}$ such that \begin{equation}\label{Eq5}\frac{1}{ad-bc}\cdot\begin{pmatrix}ad-\sigma(x)bc & ab(\sigma(x)-1) \\ cd(1-\sigma(x)) & \sigma(x)ad-bc\end{pmatrix}=\lambda\cdot\begin{pmatrix}1 & 0 \\ 0 & x\end{pmatrix}.\end{equation} Comparing the coefficients in the bottom left and top right corners in (\ref{Eq5}), we find that $ab=0$ and $cd=0$, so either $a=d=0$ or $b=c=0$. In the latter case, comparing the coefficients in the top left corners of (\ref{Eq5}) yields $\lambda=1$, and thus comparing the bottom right coefficients in (\ref{Eq5}), we get that $\sigma(x)=x$, which implies $\frac{p^f-1}{2}\mid p^e-1$, or $p^f-1\mid 2(p^e-1)$, although $p^f-1>p^f-p=p\cdot(p^{f-1}-1)>2\cdot(p^e-1)$, a contradiction. In the first case, comparing the coefficients in the top left corners of (\ref{Eq5}) gives $\lambda=\sigma(x)$, and thus by comparing the coefficients in the bottom right corners of (\ref{Eq5}), $\sigma(x)=x^{-1}$, which implies $\frac{p^f-1}{2}\mid p^e+1$, or $p^f-1\mid 2(p^e+1)$, although it is easy to check that $2(p^e+1)\leq 2(p^{f-1}+1)<p^f-1$, a contradiction.

\noindent For (3): This can be treated with an argument analogous to the one for (2) (of course, except for the cases $q=9,25$, the statement immediately follows from (2)).

\noindent For (4): Consider the natural embedding \[\Aut(\PSL_2(q))=\PGL_2(q)\rtimes\Gal(\mathbb{F}_q/\mathbb{F}_p)\hookrightarrow\PGL_2(q^2)\rtimes\Gal(\mathbb{F}_{q^2}/\mathbb{F}_p)=\Aut(\PSL_2(q^2))\] extending the natural embedding $\PGL_2(q)\hookrightarrow\PGL_2(q^2)$, by means of which we view $\Aut(\PSL_2(q))$ as a subgroup of $\Aut(\PSL_2(q^2))$. By Lemma \ref{largestOrdLem}(2,ii), $\alpha\in\PGL_2(q)$, and by the element structure of $\PGL_2(q)$, $\alpha$ is conjugate in $\PGL_2(q^2)$ to an element of the form $\pi_1(\begin{pmatrix}1 & 0 \\ 0 & x\end{pmatrix})$, where the order of $x\in\mathbb{F}_{q^2}^{\ast}$ is $\frac{q+1}{2}$. Denote by $\Frob$ the Frobenius automorphism of $\mathbb{F}_{q^2}$. It is sufficient to show that $\C_{\Aut(\PSL_2(q^2))}(\pi_1(\begin{pmatrix}1 & 0 \\ 0 & x\end{pmatrix}))\subseteq\PGL_2(q^2)\rtimes\langle\Frob^f\rangle$, since this implies $\C_{\Aut(\PSL_2(q^2))}(\alpha)\subseteq\PGL_2(q^2)\rtimes\langle\Frob^f\rangle$, and so \[\C_{\Aut(\PSL_2(q))}(\alpha)=\C_{\Aut(\PSL_2(q^2))}(\alpha)\cap\Aut(\PSL_2(q))\subseteq\]\[\subseteq(\PGL_2(q^2)\rtimes\langle\Frob^f\rangle)\cap\Aut(\PSL_2(q))=\PGL_2(q).\] To see that among the elements of $\Aut(\PSL_2(q^2))$, $\pi_1(\begin{pmatrix}1 & 0 \\ 0 & x\end{pmatrix})$ only commutes with elements from $\PGL_2(q^2)\rtimes\langle\Frob^f\rangle$, we proceed by contradiction, with the same ansatz as in point (2). This time, the divisibility relations at which one arrives in the two cases are $p^f+1\mid 2(p^e-1)$ and $p^f+1\mid 2(p^e+1)$ respectively. Note that now, $1\leq e<2f$, so we cannot argue as in point (2) that the supposed multiple is always smaller than the supposed divisor. However, this idea at least excludes the case $e<f$, so we may write $e=f+k$ with $0\leq k<f$. Then it is easy to check that $2p^k-1<\frac{2(p^e-1)}{p^f+1}<2p^k$, making the first inequality contradictory. Similarly, one can exclude the case $k>0$ for the second inequality, leaving only the case $k=0$, i.e., $e=f$.

\noindent For (5): This follows immediately from (4).
\end{proof}

\begin{proof}[Proof of Theorem \ref{pslTheo}]
As pointed out before, point (1) of the theorem follows from Lemma \ref{gmpsLem}(2), so we focus on the proof of point (2). Let $A=\A_{x,\alpha}\in\Aff(\Aut(\PSL_2(q)))$ be such that $\Lambda(A)=\Lambda_{\aff}(\Aut(\PSL_2(q)))$. Set $o_1:=\ord(\alpha)$ and $o_2:=\ord(\sh_{\alpha}(x))$, so that $\Lambda(A)=\ord(A)=o_1\cdot o_2$, and note that $o_1,o_2\leq q+1$.

If $q$ is prime, then on the one hand, we cannot have $o_1=o_2=q+1$, since that would imply by Lemma \ref{divisorLem} that $(q+1)^2\mid|\Aut(\PSL_2(q))|=|\PGL_2(q)|=q(q^2-1)$, a contradiction. The next smaller potential order of $A$ is $q(q+1)$, which is indeed attained by Lemma \ref{lcmLem} and the fact that $\Aut(\PSL_2(q))=\PGL_2(q)$ contains both an element of order $q$ and of order $q+1$.

If $q=2^f$ with $f\geq 3$, then Lemma \ref{divisorLem} again excludes the case $o_1=o_2=q+1=2^f+1$. By Lemma \ref{largestOrdLem}(1), the next smaller potential order of $A$ is $(q+1)\cdot(q-1)=q^2-1$, which can be attained in view of Lemma \ref{lcmLem}.

Finally, consider the case $q=p^f$ with $p$ an odd prime and $f\geq 2$. First, one verifies with GAP \cite{GAP4} that $\Lambda_{\aff}(\Aut(\PSL_2(9)))=40=\frac{1}{2}(9^2-1)$ and $\Lambda_{\aff}(\Aut(\PSL_2(25)))=312=\frac{1}{2}(25^2-1)$, so we may assume $(p,f)\notin\{(3,2),(5,2)\}$ from now on. By the element structure of $\PGL_2(q)$ and Lemma \ref{lcmLem}, it is clear that $\frac{1}{2}(q^2-1)$ can be attained as the order of some periodic affine map of $\Aut(\PSL_2(q))$, so it remains to show that $o_1\cdot o_2\leq\frac{1}{2}(q^2-1)$. We do this in a case distinction.

First assume that $o_1=q+1$, so that by Lemma \ref{largestOrdLem}(2,ii), $\alpha\in\PGL_2(q)$. Then the inequality is equivalent to $o_2\leq\frac{q-1}{2}$. If $o_2>\frac{q-1}{2}$, by Lemma \ref{largestOrdLem}(2,ii) again, it follows that $o_2\in\{q+1,q-1,\frac{q+1}{2}\}$. In each of these three cases, using Lemma \ref{centralizerLem} and Lemma \ref{pslCentLem}(5,3,4) respectively, we conclude that $x\in\PGL_2(q)$. This gives a contradiction when $o_2=q+1$ or $o_2=q-1$, since by the fact that $[\PGL_2(q):\PSL_(q)]=2$ and $o_1$ is even, we get that $\sh_{\alpha}(x)\in\PSL_2(q)$, but $\PSL_2(q)$ does not have any elements of order $q+1$ or $q-1$. The case $o_2=\frac{q+1}{2}$ can be refuted by Lemma \ref{divisorLem} (applied to $G:=\PGL_2(q)$) again.

Next assume that $o_1=q-1$, in which case $\alpha\in\PGL_2(q)$ as well. The inequality is equivalent to $o_2\leq\frac{q+1}{2}$, so it remains to exclude the two cases $o_2=q+1$ and $o_2=q-1$, which can be done as in the previous case, deriving the contradictory $\sh_{\alpha}(x)\in\PSL_2(q)$.

If $o_1=\frac{q+1}{2}$, we only need to exclude the case $o_2=q+1$, which can be done as in the case $o_1=q+1$ using Lemma \ref{divisorLem}. Finally, if $o_1\leq\frac{q-1}{2}$, then the inequality holds for sure.
\end{proof}

Theorem \ref{pslTheo} implies by some easy computations that for primary $q\geq 5$, $\Lambda_{\aff}(\PSL_2(q))>|\PSL_2(q)|^{\frac{2}{3}}$ if and only if $q$ is a prime, in which case $\Lambda_{\aff}(\PSL_2(q))=q(q+1)$, and verification of the statement about monotonous convergence of the upper bound is also easy. This settles our discussion of the subcase $n=1$.

\subsubsection{Useful observations for the other subcases}\label{subsubsec4P3P2}

The following lemma is immediate from the element structure of $\PGL_2(p)$:

\begin{lemmmma}\label{affineDivLem}
Let $p\geq 5$ be a prime, and let $A\in\Aff(\Aut(\PSL_2(p)))=\Aff(\PGL_2(p))$. Then $\ord(A)$ is a divisor of one of the following: $p(p+1),p(p-1),p^2-1$.\qed
\end{lemmmma}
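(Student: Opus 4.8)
The plan is to turn $\ord(A)$ into a least common multiple of two element orders of $\PGL_2(p)$ and then quote the element structure of $\PGL_2(p)$ recalled above.

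First I would observe that, since $p$ is prime, $\Aut(\PSL_2(p))=\PGL_2(p)=:\Gamma$, and that $\Gamma$ is complete: $\PSL_2(p)$ is a finite centerless CR-group, so its automorphism group $\Gamma$ is complete, whence $\Aut(\Gamma)=\Inn(\Gamma)$ and $\Gamma$ has trivial center. Therefore every $A\in\Aff(\Gamma)$ has the form $A=\A_{x,\tau_r}$ for some $x,r\in\Gamma$. Setting $s:=xr$, so that $x=sr^{-1}$, Lemma~\ref{lcmLem}(1) applies (as $\Gamma$ is centerless) and yields $\ord(A)=\lcm(\ord(r),\ord(s))$.

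Second, I would recall that for a prime $p\geq 5$ every element of $\PGL_2(p)$ has order dividing one of $p$, $p-1$, $p+1$ — precisely the element-structure fact listed just before the proof of Lemma~\ref{largestOrdLem}. Hence each of $\ord(r)$, $\ord(s)$ divides one of these three numbers, and the lemma reduces to the purely arithmetic claim that whenever $a$ divides one of $p,p-1,p+1$ and $b$ divides one of $p,p-1,p+1$, the number $\lcm(a,b)$ divides one of $p(p+1)$, $p(p-1)$, $p^2-1$.

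Finally, a brief case distinction on which of $p,p-1,p+1$ each of $a,b$ divides settles this, using that $p$ is odd so that $\gcd(p,p\pm 1)=1$ and $\gcd(p-1,p+1)=2$; the last of these gives $\lcm(p-1,p+1)=\tfrac{1}{2}(p^2-1)\mid p^2-1$, which handles the one genuinely mixed case (one of $a,b$ dividing $p-1$, the other $p+1$), while all remaining cases follow at once from the two coprimalities. There is no real obstacle here; the only steps worth a careful word are the reduction of an arbitrary periodic affine map of $\Aut(\PSL_2(p))$ to the form $\A_{x,\tau_r}$ (so that Lemma~\ref{lcmLem} becomes applicable) and that single mixed case.
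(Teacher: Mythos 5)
Your argument is correct, and it is precisely the fleshed-out version of what the paper leaves as \enquote{immediate}: completeness of $\PGL_2(p)$ reduces every periodic affine map to the form $\A_{x,\tau_r}$, Lemma \ref{lcmLem}(1) turns its order into $\lcm(\ord(r),\ord(s))$ with $s=xr$, and the element structure of $\PGL_2(p)$ plus the coprimality facts ($\gcd(p,p\pm1)=1$, $\gcd(p-1,p+1)=2$) finish the case analysis. No gaps.
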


Another useful observation (similar in spirit to Lemma \ref{boundLem}(2)) is the following: Since we have $\Lambda(\Aut(\PSL_2(q)^n))=\meo(\Aut(\PSL_2(q)^n))$, and $\Lambda_{\aff}(\Aut(\PSL_2(q)^n))\leq\meo(\Aut(\PSL_2(q)^n))^2$, whenever $\Lambda(\Aut(\PSL_2(q)^n))\leq|\PSL_2(q)|^{\frac{n}{3}}$, we also have $\Lambda_{\aff}(\Aut(\PSL_2(q)^n))\leq|\PSL_2(q)|^{\frac{2n}{3}}$.

\subsubsection{Subcase: \texorpdfstring{$n=2$}{n=2}}\label{subsubsec4P3P3}

Clearly, for primes $p\geq 5$, $\Lambda(\Aut(\PSL_2(p)^2))=\Lambda(\Aut(\PSL_2(p))\wr\Sym_2)$ is bounded from below by $p(p+1)=\meo(\Aut(\PSL_2(p))^2)$, and by Lemma \ref{wreathLem}, elements from $\Aut(\PSL_2(p)^2)\setminus\Aut(\PSL_2(p))^2$ have order bounded from above by $2\cdot(p+1)<p(p+1)$, so indeed, we have $\Lambda(\Aut(\PSL_2(p)^2))=\meo(\Aut(\PSL_2(p)^2))=p(p+1)$. As for $q\geq 5$ that are not prime, we first verify directly with GAP \cite{GAP4} that $\meo(\Aut(\PSL_2(9)^2))=40<360^{2/3}$. For all other odd $q$, we can use Lemma \ref{largestOrdLem}(2,ii) to see that $\meo(\Aut(\PSL_2(q)^2))=\frac{1}{2}(q^2-1)<(\frac{1}{2}q(q^2-1))^{\frac{2}{3}}$, and Lemma \ref{wreathLem} to treat automorphisms outside $\Aut(\PSL_2(q)^2)$ as before. Finally, for $q=2^f$ with $f\geq 3$, by Lemma \ref{largestOrdLem}(1), we have $\meo(\Aut(\PSL_2(q)^2))=q^2-1<(q(q^2-1))^{\frac{2}{3}}$, and we can treat all other automorphisms by Lemma \ref{wreathLem} again.

As for $\Lambda_{\aff}$-values in the subcase $n=2$, by the \enquote{useful observation} after Lemma \ref{affineDivLem}, it remains to show that $\Lambda_{\aff}(\Aut(\PSL_2(p)^2))\leq|\PSL_2(p)|^{\frac{4}{3}}$ for primes $p\geq 5$. It is easily checked with GAP \cite{GAP4} that $\Lambda_{\aff}(\Aut(\PSL_2(5)^2))=120<60^{\frac{4}{3}}$, so we may assume $p\geq 7$ from now on. Let $A=\A_{x,\alpha}\in\Aff(\Aut(\PSL_2(p)^2))$. We know that we can identify $\alpha$ with an element in $\Aut(\PSL_2(p)^2)$, that $\meo(\Aut(\PSL_2(p))^2)=p(p+1)$ and that elements from $\Aut(\PSL_2(p)^2)\setminus\Aut(\PSL_2(p))^2$ have at most the order $2\cdot(p+1)$. Therefore, if not both $\alpha,\sh_{\alpha}(x)\in\Aut(\PSL_2(p))^2$, then the order of $A$ is at most $2(p+1)\cdot p(p+1)<(\frac{1}{2}p(p^2-1))^{\frac{4}{3}}$. So we may assume $\alpha,\sh_{\alpha}(x)\in\Aut(\PSL_2(p))^2$ from now on, and also $\ord(A)>2(p+1)\cdot p(p+1)$. The latter implies that the two components of $\sh_{\alpha}(x)$ must be of different order. But conjugation of $\sh_{\alpha}(x)$ by any element from $\Aut(\PSL_2(p)^2)\setminus\Aut(\PSL_2(p)^2)$ swaps the orders of the components, and so $\sh_{\alpha}(x)$ cannot commute with any such element. In other words, $\C_{\Aut(\PSL_2(p)^2)}(\sh_{\alpha}(x))\subseteq\Aut(\PSL_2(p))^2$, and so, by an application of Lemma \ref{centralizerLem}, we conclude that $x\in\Aut(\PSL_2(p))^2$. Together with $\alpha\in\Aut(\PSL_2(p))^2$, this implies that $A$ decomposes as a product $A_1\times A_2$, with $A_1,A_2\in\Aff(\Aut(\PSL_2(p)))$. Therefore, by Lemma \ref{affineDivLem}, $\ord(A)=\lcm(\ord(A_1),\ord(A_2))\leq p(p^2-1)<(\frac{1}{2}p(p^2-1))^{\frac{4}{3}}$.

\subsubsection{Subcase: \texorpdfstring{$n=3$}{n=3}}\label{subsubsec4P3P4}

Denote by $\pi_3:\Aut(\PSL_2(q)^3)=\Aut(\PSL_2(q))\wr\Sym_3\rightarrow\Sym_3$ the canonical projection. By a simple case distinction according to the cycle type of $\pi_3(\alpha)$, Lemma \ref{wreathLem} can be used to show that automorphisms $\alpha$ outside $\Aut(\PSL_2(q))^3$ have order bounded from above by $2q(q+1)<|\PSL_2(q)|$ in all cases. If $q$ is a prime, then since the element orders in $\Aut(\PSL_2(q))=\PGL_2(q)$ are just the divisors of $q+1,q$ and $q-1$, we have $\meo(\Aut(\PSL_2(q))^3)=\lcm(q+1,q,q-1)=\frac{1}{2}q(q^2-1)=|\PSL_2(q)|^{\frac{3}{3}}$. If $q=2^f$ with $f\geq 3$, by Lemma \ref{largestOrdLem}(1), we have $\meo(\Aut(\PSL_2(q)^3))<(q+1)(q-1)^2<|\PSL_2(q)|$. For $q=9$, one checks with GAP \cite{GAP4} that $\meo(\Aut(\PSL_2(9)^3))=120<360$, and for odd $q\geq 25$, using Lemma \ref{largestOrdLem}(2,ii), we conclude that $\meo(\Aut(\PSL_2(q))^3)<\frac{1}{2}(q+1)(q-1)^2<|\PSL_2(q)|$.

\subsubsection{Subcase: \texorpdfstring{$n=4$}{n=4}}\label{subsubsec4P3P5}

We will show $\Lambda(\Aut(\PSL_2(q)^4))<|\PSL_2(q)|^{\frac{4}{3}}$ for all primary $q\geq 5$. For $q=5$, one can check directly that $\meo(\Aut(\PSL_2(5))^4)=60<60^{\frac{4}{3}}$, and automorphisms $\alpha$ from outside $\Aut(\PSL_2(5))^4$ are treated with Lemma \ref{wreathLem} like before. Assuming $q\geq 7$, we have $\meo(\Aut(\PSL_2(q)^4))\leq g(4)\cdot\exp(\Aut(\PSL_2(q)))\leq 4\cdot \log_p(q)\cdot p\cdot\frac{q^2-1}{\gcd(2,q-1)}\leq 4\cdot |\PSL_2(q)|<|\PSL_2(q)|^{\frac{4}{3}}$.

\subsubsection{Subcase: \texorpdfstring{$n\geq 5$}{n>=5}}\label{subsubsec4P3P6}

Here we can use crude upper bounds and \enquote{get away with it}; it is sufficient and easy to verify that \[\Lambda(\Aut(\PSL_2(q)^n))\leq g(n)\cdot \exp(\Aut(\PSL_2(q)))<3^{\frac{n}{3}}\cdot|\PSL_2(q)|\leq|\PSL_2(q)|^{n/3}.\]

\subsection{Case: \texorpdfstring{$S=\PSL_d(q),d\geq 3,q\geq 2$}{S=PSLd(q), d>=3, q>=2}}\label{subsec4P4}

From now on, we will always work with Lemma \ref{boundLem}(2). Furthermore, we will use the information on maximum automorphism orders of finite simple groups from \cite[Table 3]{GMPS15a}. Note that since $\PSL_3(2)\cong\PSL_2(7)$, we may assume that $(d,q)\not=(3,2)$, and so $\mao(\PSL_d(q))=\frac{q^d-1}{q-1}$. In view of $\meo(\Aut(\PSL_d(q))^n)\leq\meo(\Aut(\PSL_d(q)))^n$, our goal is to show that \begin{equation}\label{eq2}g(n)\cdot\meo(\Aut(\PSL_d(q)))^n<|\PSL_d(q)|^{\frac{n}{3}}=(\frac{q^{d(d-1)/2}}{\gcd(d,q-1)}\cdot\prod_{i=2}^d{(q^i-1)})^{\frac{n}{3}}.\end{equation}

\subsubsection{Subcase: \texorpdfstring{$d=3$}{d=3}}\label{subsubsec4P4P1}

We need to treat the subsubcases $q=3$ and $q=4$ separately. Using GAP \cite{GAP4}, one finds that the list of element orders in $\Aut(\PSL_3(3))$ is $1,2,3,4,6,8,12,13$. This implies that \[g(1)\cdot\meo(\Aut(\PSL_3(3))^1)=1\cdot 13<5616^{\frac{1}{3}},\] that \[g(2)\cdot\meo(\Aut(\PSL_3(3)^2)=2\cdot 156<5616^{\frac{2}{3}},\] and that $g(n)\cdot\meo(\Aut(\PSL_3(3)^n))=g(n)\cdot 312<5616^{n/3}$ for $n\geq 3$. The subsubcase $q=4$ is treated analogously.

For $q\geq 5$, using Proposition \ref{landauProp}, we see that for proving (\ref{eq2}), it is sufficient to show \[(q^2+q+1)^2<\frac{q^3}{3\gcd(3,q-1)}\cdot(q-1)(q^2-1),\] which is easy to verify.

\subsubsection{Subcase: \texorpdfstring{$d=4$}{d=4} or \texorpdfstring{$d=5$}{d=5}}\label{subsubsec4P4P2}

For $d=4$, splitting the factor $(q^2)^{\frac{n}{3}}$ from the beginning of the right-hand side of (\ref{eq2}), we can \enquote{swallow} the factor $g(n)$ on the left-hand side by Proposition \ref{landauProp}, and see that it is sufficient to show that \begin{equation}\label{eq6}(q^3+q^2+q+1)^2<\frac{q^4}{\gcd(4,q-1)}(q-1)(q^3-1)(q^2-1).\end{equation} Replacing the left-hand side of (\ref{eq6}) by the larger $q^8$, dividing both sides by $q^8$ and performing appropriate cancelations and distributions of factors $q$ among the factors on the right-hand side, we get the stronger inequality \begin{equation}\label{eq8}1<\frac{1}{\gcd(4,q-1)}(q-1)\cdot(\sqrt{q}-\frac{1}{q^{5/2}})\cdot(\sqrt{q}-\frac{1}{q^{3/2}}),\end{equation} which is obviously true. The subcase $d=5$ can be treated in a similar way.

\subsubsection{Subcase: \texorpdfstring{$d\geq 6$}{d>=6}}\label{subsubsec4P4P3}

One can check that $2d\leq\frac{d(d-1)}{2}-2$ for $d\geq 6$. The left-hand side of (\ref{eq2}) is therefore bounded from above by \[(q^2)^{\frac{n}{3}}\cdot(q^d-1)^n<(q^2)^{\frac{n}{3}}\cdot(q^d-1)^{\frac{n}{3}}\cdot (q^{2d})^{\frac{n}{3}}\leq(q^2)^{\frac{n}{3}}\cdot(q^d-1)^{\frac{n}{3}}\cdot (q^{\frac{d(d-1)}{2}-2})^{\frac{n}{3}}=(q^{d(d-1)/2}\cdot(q^d-1))^{\frac{n}{3}},\] which is obviously smaller than the right-hand side of (\ref{eq2}).

\subsection{Case: \texorpdfstring{$S=\PSU_d(q),d\geq3,(d,q)\not=(3,2)$}{S=PSUd(q),d>=3,(d,q)!=(3,2)}}\label{subsec4P5}

Note that $|\PSU_d(q)|=\frac{1}{\gcd(d,q+1)}q^{d(d-1)/2}\prod_{i=2}^d{(q^i-(-1)^i)}$.

\subsubsection{Subcase: \texorpdfstring{$d=3$}{d=3}}\label{subsubsec4P5P1}

It follows from \cite[Table 3]{GMPS15a} that $\mao(\PSU_3(q))\leq q^2+q$, and so it is sufficient to show that \begin{equation}\label{eq9}g(n)\cdot(q^2+q)^n<(\frac{1}{\gcd(3,q+1)}q^3(q^3+1)(q^2-1))^{\frac{n}{3}}.\end{equation} Splitting $q^{\frac{n}{3}}$ from the right-hand side of (\ref{eq9}) to \enquote{swallow} $g(n)$, we see that (\ref{eq9}) is implied by \begin{equation}\label{eq10}\gcd(3,q+1)<\frac{q^2-q+1}{q+1}\cdot(1-\frac{1}{q}).\end{equation} For $q\geq 7$, the first factor on the right-hand side of (\ref{eq10}) is bounded from below by $4$, and so the entire right-hand side is bounded from below by $4\cdot\frac{6}{7}>3\geq\gcd(3,q+1)$. For $q=3$ and $q=4$, one verifies the validity of (\ref{eq10}) directly. Finally, for $q=5$, one can check that \[g(n)\cdot\meo(\Aut(\PSU_3(5))^n)<126000^{\frac{n}{3}},\] like we did for $q=3$ in the subcase $d=3$ of the previous case (Subsubsection \ref{subsubsec4P4P1}).

\subsubsection{Subcase: \texorpdfstring{$d\geq4$}{d>=4}}\label{subsubsec4P5P2}

We read off from \cite[Table 3]{GMPS15a} that $\mao(\PSU_4(q))\leq q^3+4$, so for the subsubcase $d=4$, we want to show that \begin{equation}\label{eq11}g(n)\cdot(q^3+4)^n<(\frac{q^6}{\gcd(4,q+1)}(q^4-1)(q^3+1)(q^2-1))^{\frac{n}{3}}.\end{equation} Splitting $(q+1)^{\frac{n}{3}}$ from the right-hand side of (\ref{eq11}) to \enquote{swallow} $g(n)$, we see that (\ref{eq11}) is weaker than \[(q^3+4)^3<\frac{q^6}{\gcd(4,q+1)}(q^4-1)(q^3+1)(q-1),\] which is easy to prove for all $q\geq 2$. The subsubcase $d=5$ is similar to $d=4$, using that $\mao(\PSU_5(q))<q^5$. Finally, using $\mao(\PSU_d(q))<q^d$, we can treat the subsubcase $d\geq 6$ similarly to the subcase $d\geq 6$ of the previous case (Subsubsection \ref{subsubsec4P4P3}).

\subsection{Case: \texorpdfstring{$S=\PSp_{2m}(q),m\geq 2,(m,q)\not=(2,2)$}{S=PSp2m(q),m>=2,(d,q)!=(2,2)} or \texorpdfstring{$S=\PO_{2m+1}(q),m\geq 3$}{S=POmega2m+1(q),m>=3}}\label{subsec4P6}

By \cite[Table 3]{GMPS15a}, in both cases, $\mao(S)\leq\frac{q^{m+1}}{q-1}$. Also, $|S|=\frac{q^{m^2}}{\gcd(2,q-1)}\prod_{i=1}^m{(q^{2i}-1)}$ in both cases, so we can discuss them simultaneously. We want to show that \begin{equation}\label{eq12}g(n)\cdot\frac{q^{n(m+1)}}{(q-1)^n}<(\frac{q^{m^2}}{\gcd(2,q-1)}\prod_{i=1}^{m}{(q^{2i}-1)})^{\frac{n}{3}}.\end{equation} Split a factor $(q+1)^{\frac{n}{3}}$ from the right-hand side of (\ref{eq12}) to \enquote{swallow} $g(n)$. It follows that (\ref{eq12}) is weaker than \begin{equation}\label{eq13}q^{3(m+1)}<\frac{q^{m^2}}{\gcd(2,q-1)}\prod_{i=2}^m{(q^{2i}-1)}\cdot(q-1)^4,\end{equation} which is easy to verify for all $(m,q)\not=(2,2)$.

\subsection{Case: \texorpdfstring{$S=\PO_{2m}^+(q),m\geq 4$}{S=POmegaPlus2m(q),m>=4} or \texorpdfstring{$S=\PO_{2m}^-(q),m\geq 4$}{S=POmegaMinus2m(q),m>=4}}\label{subsec4P7}

In both cases, we have $\mao(S)\leq\frac{q^{m+1}}{q-1}$ and $|S|=\frac{q^{m(m-1)}(q^m-1)}{\gcd(4,q^m-1)}\prod_{i=1}^{m-1}{(q^{2i}-1)}$, so we want to show that \begin{equation}\label{eq14}g(n)\cdot\frac{q^{n(m+1)}}{(q-1)^n}<(\frac{q^{m(m-1)}(q^m-1)}{\gcd(4,q^m-1)}\prod_{i=1}^{m-1}{(q^{2i}-1)})^{\frac{n}{3}},\end{equation} which can be done analogously to the previous case (Subsection \ref{subsec4P6}).

\subsection{Case: \texorpdfstring{$S$}{S} is an exceptional group of Lie type}

Guest, Morris, Praeger and Spiga \cite[Proof of Theorem 1.2]{GMPS15a} derived upper bounds on $\mao(S)$ for such $S$, based on the information on largest element orders of exceptional Lie type groups of odd characteristic from \cite[Table A.7]{KS09a}, the upper bounds on largest element orders for those of even characteristic from \cite[Table 5]{GMPS15a}, and information on outer automorphism group orders of such groups from \cite[Table 5, p.~xvi]{CCNPW85a}. Denoting their upper bound by $o(S)$, one can, in almost all cases, prove the sufficient inequality \begin{equation}\label{eq16}g(n)\cdot o(S)^n<|S|^{\frac{n}{3}}\end{equation} with arguments similar to those used in the nonexceptional cases. There are two particular subcases where we use a sharper upper bound on $\mao(S)$, based on reading off the precise value of $\meo(S)$ (not just an upper bound on it) and of $|\Out(S)|$ from \cite{CCNPW85a} and setting $o(S):=\meo(S)\cdot|\Out(S)|$. These two cases are $S=\leftidx{^{3}}D_4(2)$ (with $o(S)=18\cdot 3=54$) and $S=\leftidx{^{2}}F_4(2)'$ (with $o(S)=16\cdot 2=32$).

\section{Acknowledgements}

The author would like to thank the following people: Michael Giudici for raising his awareness to the paper \cite{GPS15a}, Igor Shparlinski and Alina Ostafe for a valuable discussion stimulating this improved version of a previous paper by the author, and Peter Hellekalek for his support and helpful comments.

\end{document}